\theoremstyle{theorem}
\newtheorem{theorem}{Theorem}[section]
\newtheorem*{theorem*}{Theorem}
\newtheorem{corollary}[theorem]{Corollary}
\newtheorem*{corollary*}{Corollary}
\newtheorem{lemma}[theorem]{Lemma}
\newtheorem{proposition}[theorem]{Proposition}
\newtheorem*{thmdef*}{Theorem/Definition}
\theoremstyle{definition}
\newtheorem{remark}[theorem]{Remark}
\newtheorem{example}[theorem]{Example}
\newtheorem{definition}[theorem]{Definition}
\newtheorem*{definition*}{Definition}
\newtheorem{question}[theorem]{Question}
\newcommand{\C}{\mathbb{C}}
\newcommand{\Z}{\mathbb{Z}}
\newcommand{\cC}{\mathcal{C}}
\newcommand{\cA}{\mathcal{A}}
\newcommand{\R}{\mathbb{R}}
\newcommand{\lp}{\left(}
\newcommand{\rp}{\right)}
\newcommand{\sq}{\subseteq}
\newcommand{\ds}{\dots}
\newcommand{\cds}{\cdots}
\newcommand{\arrow}{arrow}
\newcommand{\F}{\mathbb{F}}
\newcommand{\Q}{\mathbb{Q}}
\newcommand{\N}{\mathbb{N}}
\newcommand{\A}{\mathbb{A}}
\newcommand{\fa}{\mathfrak{a}}
\renewcommand{\t}[1]{\widetilde{#1}}
\newcommand{\ul}[1]{\underline{#1}}
\DeclareMathOperator{\Hom}{Hom}
\DeclareMathOperator{\End}{End}
\DeclareMathOperator{\Spec}{Spec}
\DeclareMathOperator{\im}{im}
\DeclareMathOperator{\Mod}{Mod}
\DeclareMathOperator{\Fun}{Fun}
\newcommand{\Mustata}{Musta\c{t}\u{a}}
\setlist[enumerate]{itemsep=2pt, topsep=2pt, itemindent=10pt, label=(\roman*)}
\begin{document}
 	\title{Bernstein-Sato theory for arbitrary ideals in positive characteristic}
 	\author{Eamon Quinlan-Gallego \footnote{Partially supported by NSF grant DMS-1801697 and by the Ito Foundation for International Education Exchange}}
 	
 	\maketitle
 	
\begin{abstract}
\Mustata \ defined Bernstein-Sato polynomials in prime characteristic for principal ideals and proved that the roots of these polynomials are related to the $F$-jumping numbers of the ideal. This approach was later refined by Bitoun. Here we generalize these techniques to develop analogous notions for the case of arbitrary ideals and prove that these have similar connections to $F$-jumping numbers.
\end{abstract}
\section{Introduction} \label{scn-intro}

Let $R = \C[x_1, \ds, x_n]$ be a polynomial ring over $\C$. We denote by $D_R$ the ring of $\C$-linear differential operators on $R$, i.e. the ring generated by $R$ and its derivations inside of $\End_\C(R)$. Let $f \in R$ be a nonzero polynomial. Bernstein \cite{Ber} and Sato \cite{SatoM} independently, and in different contexts, discovered the following fact: there is a nonzero polynomial $b(s) \in \C[s]$ and a differential operator $P(s) \in D_R[s]$ satisfying the following functional equation:
$$P(s) \cdot f^{s+1} = b(s) f^s.$$
The monic polynomial $b_f(s)$ of least degree for which there is some $P(s) \in D_R[s]$ satisfying the above equation is called the Bernstein-Sato polynomial for $f$. 

In the case where $f$ defines an isolated singularity it was proven by Malgrange \cite{Mal75} that the roots of $b_f(s)$ are negative and rational. This was later extended by Kashiwara \cite{Kas76} to the case of arbitrary $f$ by using resolution of singularities. In particular, we have that $b_f(s) \in \Q[s]$.

Since its inception the Bernstein-Sato polynomial has seen a wide variety of applications. In \cite{Mal74} Malgrange exhibited a relation between the roots of $b_f(s)$ and the eigenvalues of the monodromy action on the cohomology of the Milnor fibre of $f$. Kashiwara \cite{Kas83} and Malgrange \cite{Mal83} also used the existence of Bernstein-Sato polynomials to define $V$-filtrations with the purpose of defining nearby and vanishing cycles at the level of $D$-modules. Coming full circle, Budur, \Mustata \ and Saito then used this theory of $V$-filtrations to define the Bernstein-Sato polynomial $b_\fa(s)$ of an arbitrary ideal $\fa \sq R$.

A key application of the theory of Bernstein-Sato polynomials, critical in our motivation, comes from the relationship between its roots and the jumping numbers for multiplier ideals. Since this relationship holds for the Bernstein-Sato polynomials of \cite{BMSa} (i.e. those of general ideals $\fa \sq R$) let us explain it in this more general case. 

Let $\fa \sq R$ be an ideal and suppose that $\lambda > 0$ is a real number. Using a log-resolution of $\fa$ one can define the multiplier ideal $J(\fa^\lambda)$ of $\fa$ with exponent $\lambda$ (see \cite{lazarsfeld2004positivity} for details). The $J(\fa^\lambda)$ are ideals of $R$ satisfying the following two properties:
\begin{enumerate}
	\item If $\lambda < \mu$ then $J(\fa^\lambda) \supseteq J(\fa^\mu)$. 
	\item For all $\lambda > 0$ there exists some $\epsilon > 0$ such that $J(\fa^\lambda) = J(\fa^\mu)$ for all $\mu \in [\lambda, \lambda + \epsilon].$
\end{enumerate}
An real number $\lambda$ for which one has $J(\fa^\lambda) \neq J(\fa^\mu)$ for all $\mu < \lambda$ is called a jumping number for $\fa$. The set of jumping numbers is discrete and rational, and the smallest jumping number $\alpha_\fa$ (the so-called log-canonical threshold of $\fa$) is an important invariant in singularity theory. The connection to the theory of Bernstein-Sato polynomials comes from the following fact: $\alpha$ is the smallest root of $b_\fa(-s)$, and every jumping number in the interval $[\alpha, \alpha + 1)$ is a root of $b_\fa(-s)$ \cite{Kollar97}, \cite{ELSV04}, \cite[Thm. 2]{BMSa}.

A characteristic $p > 0$ analogue of the multiplier ideal $J(\fa^\lambda)$ is given by the test ideal $\tau(\fa^\lambda)$ (c.f. Definition 2.4). The notion of test ideal originally comes from the theory of tight closure of Hochster and Huneke \cite{HH90}, and was later generalized by Hara and Yoshida in \cite{HY03}. The test ideals $\tau(\fa^\lambda)$ satisfy properties (i) and (ii) and thus one can analogously define jumping numbers for the test ideal (so-called $F$-jumping numbers of $\fa$), which are therefore characteristic-$p$ analogues of jumping numbers for the multiplier ideal. The set of $F$-jumping numbers is known to be discrete and rational (see \cite{BMSm2008} for the finite-type regular case, and \cite{SchTucTI} for greater generality).

As mentioned, in characteristic zero the Bernstein-Sato polynomial $b_\fa(s)$ carries information about the jumping numbers of $\fa$. The existence of characteristic-$p$ analogues of jumping numbers suggests that there is a Bernstein-Sato theory in positive characteristic. This hope is emboldened by the fact that the Bernstein-Sato polynomial $b_\fa(s)$ of $\fa$ (in characteristic zero) has been related to other characteristic-$p$ invariants of mod-$p$ reductions of $\fa$ \cite{MTW}.

In \cite{Mustata2009}, \Mustata \ began this line of research and developed a Bernstein-Sato theory in prime characteristic for principal ideals $\fa = (f)$. Since then this technique has been refined by Bitoun \cite{Bitoun2018} and has been extended to the setting of unit $F$-modules \cite{Stad12} and $F$-regular Cartier modules \cite{BliStab16}. In this work we provide the first instance of Bernstein-Sato theory in positive characteristic where $\fa$ is an arbitrary ideal. We do this by generalizing the work of \Mustata \ and some of the work of Bitoun and as such we still work in the ring setting; that is, we do not consider $F$-modules or Cartier modules although an extension of our work to these settings might be possible.

Let us summarize the work of \Mustata. Suppose that $R$ be a regular $F$-finite ring and that $\fa = (f)$ is a principal ideal of $R$. Over $\C$ the Bernstein-Sato polynomial is defined in \cite{BMSa} as the minimal polynomial of an operator $s_1$ acting on a certain module $N_f$, and \Mustata \ observed that the module can be defined in prime characteristic but that, instead of considering a single operator $s_1$, one should consider the action of an infinite family $\{s_{p^i} : i = 0,1, \ds \}$. Moreover, in characteristic $p> 0$ the module $N_f$ can be expressed as a direct limit
$$N_f = \lim_{\to e} N^e_{f},$$
where for all $e$ the module $N^e_{f}$ carries an action of the first $e$ operators $s_{p^0}, \ds , s_{p^{e-1}}$ and the transition map $N^e_f \to N^{e+1}_f$ is linear with respect to these. 

The operators $s_{p^i}$ satisfy two crucial properties: they are pairwise commuting and satisfy $s_{p^i}^p = s_{p^i}$. Observe that because we are in characteristic $p >0$ the latter is equivalent to $\prod_{j = 0}^{p-1} (s_{p^i} - j) = 0$. It then follows that if $e > 0$ is fixed then every module over $\F_p[s_{p^0}, \ds, s_{p^{e-1}}]$ splits as a direct sum of multi-eigenspaces. In particular, for all $e > 0$ we have decompositions
$$N^e_f = \bigoplus_{\alpha \in \F_p^e} (N^e_f)_\alpha,$$
where, given $\alpha = (\alpha_0, \ds, \alpha_{e-1}) \in \F_p^e$, the subspace $(N^e_f)_\alpha$ consists of those elements $u \in N^e_f$ for which $s_{p^i} \cdot u = \alpha_i \cdot u$ for all $i = 0, 1, \ds, e-1$. In \cite{Mustata2009} \Mustata \ proves 
that the modules $N^e_f$ carry the information about the $F$-jumping numbers of $f$ (c.f. Definition \ref{def-Fjn}) in the following way.

\begin{theorem*}[\cite{Mustata2009}] \label{mustata}
	Let $e > 0$ and $\alpha = (\alpha_0, \ds, \alpha_{e-1}) \in \F_p^e$. Then the following are equivalent.
	\begin{enumerate}[(1)]
		\item The module $(N^e_f)_\alpha$ is nonzero.
		\item There is an F-jumping number of $f$ in the interval
		$$\bigg(\frac{\alpha_0 + p \alpha_1 + \cds + p^{e-1} \alpha_{e-1}}{p^e}, \frac{\alpha_0 + p \alpha_1 + \cds + p^{e-1} \alpha_{e-1} + 1}{p^e} \bigg]$$
	\end{enumerate}
\end{theorem*}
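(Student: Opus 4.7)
The plan is to identify each $\alpha$-eigenspace $(N^e_f)_\alpha$ with the graded piece $\tau(f^{k/p^e})/\tau(f^{(k+1)/p^e})$ of the test-ideal filtration, where $k = \alpha_0 + p \alpha_1 + \cdots + p^{e-1} \alpha_{e-1} \in \{0, 1, \ldots, p^e - 1\}$. Granting this identification, the theorem follows from a general observation about the filtration $\lambda \mapsto \tau(f^\lambda)$: since it is monotonically decreasing, discrete, and right-continuous, and jumps precisely at the $F$-jumping numbers, the quotient $\tau(f^{k/p^e})/\tau(f^{(k+1)/p^e})$ is nonzero if and only if there is an $F$-jumping number in $(k/p^e, (k+1)/p^e]$. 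The endpoint conventions arise naturally: right-continuity at $k/p^e$ excludes it, whereas a jump at $(k+1)/p^e$ is a strict decrease coming from the left and so does contribute.

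To set up the identification, I would first unpack Mustața's construction of $N^e_f$ and its operators $s_{p^i}$ in order to put every class into a canonical form. Concretely, every element should be representable as (the image of) $F^e_*(g/f^n)$ for some $g \in R$ and $1 \le n \le p^e$, and the action of each $s_{p^i}$ on such a representative should be readable off the base-$p$ digits of $n$ (or of $p^e - n$, depending on conventions). A direct calculation then shows that $F^e_*(g/f^n)$ lies in the $\alpha$-eigenspace exactly when these digits match the entries of $\alpha$, so that $(N^e_f)_\alpha$ is generated as an $R$-module by representatives with a single fixed value of $n$ depending only on $k$. I would then compare this with the Frobenius/trace presentation of test ideals: in a regular $F$-finite ring, for any integer $m$ and all sufficiently large $N \ge 0$,
$$\tau(f^{m/p^e}) \; = \; \phi^{e+N}\bigl(F^{e+N}_*(f^{m p^N} R)\bigr),$$
where $\phi$ denotes the Cartier/trace map. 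Feeding this into the presentation above rewrites $(N^e_f)_\alpha$ as the test-ideal quotient asserted in the first paragraph, and the theorem follows.

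The main obstacle is the middle step: establishing that the eigenvalues of the abstractly-defined operators $s_{p^i}$ literally record base-$p$ digit information about the pole order along $f$. This is Mustața's core observation, and verifying it requires a careful calculation with the Frobenius action on $R_f$. Once that is done, matching each eigenspace with a test-ideal quotient is a more formal exercise in the Cartier-trace description of $\tau(f^\lambda)$, although one must be mindful of off-by-one conventions to end up with the half-open interval $(k/p^e, (k+1)/p^e]$ rather than, say, $[k/p^e, (k+1)/p^e)$.
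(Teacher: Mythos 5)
Your proposal is correct and mirrors the paper's approach: Theorem~\ref{thm-summands-of-Ne} specialized to $r=1$, together with Lemma~\ref{caralgdiffop} and the principal-ideal identity $\cC^e_R\cdot f^n = \tau(f^{n/p^e})$, gives exactly the identification of $(N^e_f)_\alpha$ with a test-ideal quotient that you outline, and Corollary~\ref{principal-ideal-cor} then draws the same conclusion. The one technical nuance your sketch elides is that the paper's machinery actually produces $D^e_R\cdot f^k/D^e_R\cdot f^{k+1} = \tau(f^{k/p^e})^{[p^e]}/\tau(f^{(k+1)/p^e})^{[p^e]}$, the Frobenius $p^e$-th power of your quotient, but by flatness of Frobenius on a regular ring this vanishes if and only if your quotient does, so nothing is lost.
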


In (2) we have implicitly identified each $\alpha_i \in \F_p$ with its unique representative in $\{0, \ds, p-1\}$. Also note that condition (2) is equivalent to the existence of an F-jumping number $\lambda \in (0,1]$ of $f$ whose base-$p$ expansion starts with $\lambda = \alpha_{e-1} p^{-1} + \alpha_{e-2} p^{-2} + \cds + \alpha_0 p^{-e}$ -- where we take the base-$p$ expansion that is eventually nonzero. \Mustata \ defines a collection of polynomials associated to $f$. He calls these Bernstein-Sato polynomials, but we have found that the na\"ive extension of this definition to the monomial case does not give the desired result and thus we prefer the name ``approximating polynomials". 
\begin{definition*} [\cite{Mustata2009}]
	Let $e > 0$. The $e$-th approximating polynomial of $f$ (called Bernstein-Sato polynomials in \cite{Mustata2009}, and denoted $b^e_f(s)$) is
	$$a^e_f(s) := \prod_{\{\alpha \in \F_p^e : (N^e_\fa)_\alpha \neq 0\}} \bigg(s - \frac{\alpha_0 + p \alpha_1 + \cds + p^{e-1} \alpha_{e-1}}{p^e}\bigg).$$
\end{definition*}
\Mustata's theorem then gives the following.
\begin{corollary*}[\cite{Mustata2009}]
	For every $e >0$ the roots of $a^e_f(s)$ are given by the rational numbers $\frac{\lceil p^e \lambda \rceil -1 }{p^e}$ as $\lambda$ ranges through all F-jumping numbers of $f$ in $(0,1]$.
\end{corollary*}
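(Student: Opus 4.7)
The plan is to derive this corollary as an immediate consequence of \Mustata's theorem by matching two parameterizations of the roots of $a^e_f(s)$. By definition the roots are the rationals $\beta/p^e$ where $\beta \in \{0, 1, \dots, p^e - 1\}$ ranges over those integers whose base-$p$ expansion $\alpha(\beta) \in \F_p^e$ satisfies $(N^e_f)_{\alpha(\beta)} \neq 0$; note that $\beta \mapsto \alpha(\beta)$ is a bijection from $\{0, \dots, p^e - 1\}$ to $\F_p^e$. Applying \Mustata's theorem replaces this nonvanishing condition by the condition that the half-open interval $(\beta/p^e, (\beta+1)/p^e]$ contains an F-jumping number of $f$.

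To finish, I would observe that as $\beta$ ranges over $\{0, 1, \dots, p^e - 1\}$ these intervals partition $(0, 1]$, so each F-jumping number $\lambda \in (0,1]$ lies in exactly one of them. A short calculation identifies this unique interval: the defining condition $\beta < p^e \lambda \leq \beta + 1$ is equivalent to $\beta = \lceil p^e \lambda \rceil - 1$, handling both the case where $p^e \lambda$ is an integer and where it is not. Combining these remarks yields that the set of roots of $a^e_f(s)$ coincides with $\{(\lceil p^e \lambda \rceil - 1)/p^e : \lambda \text{ an F-jumping number of } f \text{ in } (0,1]\}$, which is the claim.

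No real obstacle is anticipated: the argument is a bookkeeping exercise layered on top of \Mustata's theorem, with the only subtle point being the correct treatment of the half-open endpoint of the interval, which is exactly what forces the ceiling formula rather than a floor in the statement.
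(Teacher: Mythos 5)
Your proposal is correct and follows essentially the same route as the paper. The paper's proof of its Corollary \ref{principal-ideal-cor} reduces (via $\cC_R^e \cdot \fa^n = \tau(\fa^{n/p^e})$ in the principal case, Proposition \ref{nu-invt-trun-ti-prop}, and Theorem \ref{thm-roots-of-be}) to exactly the same identification you make: $(N^e_f)_\alpha \neq 0$ if and only if there is an $F$-jumping number in $(\tfrac{|\alpha|}{p^e}, \tfrac{|\alpha|+1}{p^e}]$, followed by the observation that this interval is the unique one in the partition of $(0,1]$ containing $\lambda$, with $|\alpha| = \lceil p^e\lambda\rceil - 1$. You simply invoke \Mustata's theorem as stated in the introduction rather than the paper's own intermediate results, which is the natural choice for the introduction's version of the corollary.
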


In Section \ref{scn-multi-eigen-dec} we follow \Mustata's approach and define approximating polynomials $a^e_\fa(s)$ for the case of an arbitrary ideal $\fa$ in positive characteristic. We then show that these polynomials still retain information about the F-jumping numbers of $\fa$. First of all, the roots of $a^e_\fa(s)$ are related to the $\nu$-invariants $\nu^J_\fa(p^e)$ of \cite{MTW} (c.f. Definition \ref{def-nu-invt}) as follows.
\begin{theorem*}[\ref{thm-roots-of-be}]
	Let $e > 0$ be an integer. Then all roots of $a^e_\fa(s)$ are simple and lie in $[0,1) \cap \Z \frac{1}{p^e}$. Moreover, the roots are given by $\frac{\nu}{p^e} - \lfloor \frac{\nu}{p^e} \rfloor$ where $\nu$ ranges through all $\nu$-invariants $\nu^J_\fa(p^e)$. 
\end{theorem*}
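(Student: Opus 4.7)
The statement has two halves that I would handle in sequence: first the structural claim (simplicity, location in $[0,1) \cap \Z\frac{1}{p^e}$), and then the explicit identification of the roots with fractional parts of $\nu$-invariants. The first half should be essentially formal. By definition, each factor of $a^e_\fa(s)$ is of the form $s - \frac{\alpha_0 + p\alpha_1 + \cds + p^{e-1}\alpha_{e-1}}{p^e}$ indexed by a tuple $\alpha \in \F_p^e$. Under the convention $\alpha_i \in \{0, \ds, p-1\}$, the map $(\alpha_0, \ds, \alpha_{e-1}) \mapsto \alpha_0 + p\alpha_1 + \cds + p^{e-1}\alpha_{e-1}$ is a bijection onto $\{0, 1, \ds, p^e-1\}$, so distinct tuples produce distinct factors. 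This at once gives simplicity and shows every root lies in $\{N/p^e : 0 \le N < p^e\} = [0,1) \cap \Z\frac{1}{p^e}$.

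For the substantive half I would unwind the construction of $N^e_\fa$ and the operators $s_{p^0}, \ds, s_{p^{e-1}}$ from Section \ref{scn-multi-eigen-dec}. Following \Mustata's template in the principal case, where generators of $N^e_f$ are indexed by exponents $N$ (via classes $f^{-N}$) and $s_{p^i}$ reads off the $i$-th base-$p$ digit of $N$ modulo $p^e$, I expect the general $N^e_\fa$ to admit a presentation by generators $u_N$ indexed by integers $N$ in some range, with $s_{p^i} \cdot u_N = \alpha_i \cdot u_N$ exactly when $N \equiv \alpha_0 + p\alpha_1 + \cds + p^{e-1}\alpha_{e-1} \pmod{p^e}$. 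Under this picture, $(N^e_\fa)_\alpha \neq 0$ translates directly to the existence of some nonzero $u_N$ with the digits of $N \bmod p^e$ prescribed by $\alpha$.

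The next step is to interpret the survival $u_N \neq 0$ as a non-containment of ideals. The natural guess, in line with how $\nu^J_\fa(p^e)$ is defined in \cite{MTW}, is that $u_N \neq 0$ in $N^e_\fa$ is equivalent to the existence of a proper ideal $J$ (in the appropriate class) such that $\fa^N \not\subseteq J^{[p^e]}$, i.e. $N \le \nu^J_\fa(p^e)$. Combining this with the preceding digit-extraction yields: $(N^e_\fa)_\alpha \neq 0$ iff there exists $J$ and some $\nu = \nu^J_\fa(p^e)$ with $\nu \equiv \alpha_0 + p\alpha_1 + \cds + p^{e-1}\alpha_{e-1} \pmod{p^e}$. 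Dividing by $p^e$ and comparing with the definition of $a^e_\fa(s)$, the nonzero factors are precisely $s - \bigl(\frac{\nu}{p^e} - \lfloor \frac{\nu}{p^e} \rfloor\bigr)$ as $\nu$ ranges over all $\nu$-invariants, giving the stated description.

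The main obstacle is the bridge in the middle paragraph: showing that the multi-eigenspace decomposition of $N^e_\fa$ really is indexed by exponents $N$ with $s_{p^i}$ extracting base-$p$ digits, and that the vanishing of the corresponding generator is controlled by the non-containment $\fa^N \subseteq J^{[p^e]}$. For principal $\fa = (f)$ this is visible from the $D$-module structure on $R_f/R$, but for arbitrary $\fa$ the module $N^e_\fa$ is not a localization and the operators $s_{p^i}$ must be built from Cartier/Frobenius-linear data rather than from derivations. Making the two descriptions—the operator-theoretic one on $N^e_\fa$ and the ideal-theoretic one defining $\nu^J_\fa(p^e)$—visibly match up is where I expect the technical core of the proof to reside.
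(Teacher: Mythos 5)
Your first paragraph is right and matches the paper: distinct tuples $\alpha \in \F_p^e$ yield distinct values $|\alpha|/p^e \in \{0, 1/p^e, \ldots, (p^e-1)/p^e\}$, which gives both simplicity of the roots and their location in $[0,1) \cap \Z\frac{1}{p^e}$.

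For the second half your route is the paper's, but there are two concrete gaps. First, your ``natural guess'' says $u_N \neq 0$ iff there is a proper $J$ with $\fa^N \not\subseteq J^{[p^e]}$, i.e.\ $N \le \nu^J_\fa(p^e)$. But that holds for \emph{every} $N$ (take $J$ deep enough), so the criterion as stated cannot distinguish $\nu$-invariants from other integers and would wrongly force every eigenspace to be nonzero. What you actually need is that $N$ \emph{equal} some $\nu^J_\fa(p^e)$; the paper rephrases this (Proposition~\ref{nu-invt-trun-ti-prop}) as the strict inequality $\cC^e_R \cdot \fa^N \neq \cC^e_R \cdot \fa^{N+1}$, and it is exactly this inequality that the structure theorem for the eigenspaces (Theorem~\ref{thm-summands-of-Ne}(b)) tests, via Lemma~\ref{caralgdiffop} translating between the $D^e_R$- and $\cC^e_R$-actions. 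Second, for the converse direction (every $\nu$-invariant contributes a root) you do not address the mismatch of ranges: a $\nu$-invariant can be arbitrarily large, while the exponents $|\ul{a}|$ actually indexing the summands of $N^e_\fa$ lie in $\{0, \ldots, r(p^e-1)\}$, so one cannot directly read off a nonzero eigenspace from a given $\nu$-invariant. The paper closes this with Corollary~\ref{nu-invt-dynamics-cor} (a consequence of Skoda's theorem): if $n \geq rp^e$ is a $\nu$-invariant of level $e$ then so is $n - p^e$, so every $\nu$-invariant may be replaced by one in $[0, rp^e)$ with the same residue mod $p^e$. Without that reduction the ``moreover'' clause is not established. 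You are right that the precise description of $(N^e_\fa)_\alpha$ is where the real work lies (it is Theorem~\ref{thm-summands-of-Ne}, proved over Section~\ref{scn-multi-eigen-dec}), but the two points above are where your conjectured picture needs repair even granting that theorem.
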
 
Secondly, if $e_0$ is large enough (more precisely, a stable exponent for $\fa$, see Definition \ref{def-stable-exp}) then the roots of $a^{e_0 + e}_\fa(s)$ give approximations to the decimal parts of the $F$-jumping numbers of $\fa$, and these approximations are accurate in the order of $1/p^e$.

\begin{theorem*}[\ref{thm-roots-approx-fjn}]
	Let $e_0$ be a stable exponent and fix integers $e > 0$ and $0 \leq k < p^e$. Then the following are equivalent.
	\begin{enumerate}[(1)]
		\item There is a root of $a^{e_0 + e}_\fa(s)$ in $[\frac{k}{p^e}, \frac{k + 1}{p^e})$.
		\item There is an $F$-jumping number of $\fa$ in $(\frac{k}{p^e}, \frac{k + 1}{p^e}] + \N$.
	\end{enumerate}	
\end{theorem*}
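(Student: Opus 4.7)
The plan is to deduce this theorem from Theorem \ref{thm-roots-of-be} together with the standard dictionary, at a stable exponent, between the $\nu$-invariants $\nu^J_\fa(p^{e'})$ and the $F$-jumping numbers of $\fa$.

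First I would translate (1) into a statement about $\nu$-invariants. By Theorem \ref{thm-roots-of-be}, a root of $a^{e_0+e}_\fa(s)$ lies in $[k/p^e, (k+1)/p^e) = [kp^{e_0}/p^{e_0+e}, (k+1)p^{e_0}/p^{e_0+e})$ exactly when there is some $\nu = \nu^J_\fa(p^{e_0+e})$ whose residue modulo $p^{e_0+e}$ lies in $\{kp^{e_0}, kp^{e_0}+1, \ldots, (k+1)p^{e_0}-1\}$; equivalently, $\nu = mp^{e_0+e} + r$ with $m \in \N$ and $r \in [kp^{e_0}, (k+1)p^{e_0}) \cap \Z$.

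Second, I would invoke the stability property (Definition \ref{def-stable-exp}) to identify, for every $e' \geq e_0$, the collection $\{\nu^J_\fa(p^{e'})\}_J$ with $\{\lceil p^{e'}\lambda\rceil - 1 : \lambda \text{ an $F$-jumping number of } \fa\}$. A direct ceiling-function computation then shows that for $\lambda = m + \mu$ with $m \in \N$ and $\mu \in (0,1]$, one has $\lceil p^{e_0+e}\lambda\rceil - 1 \in [mp^{e_0+e} + kp^{e_0}, mp^{e_0+e} + (k+1)p^{e_0})$ if and only if $\mu \in (k/p^e, (k+1)/p^e]$. Combining this with the first step, (1) becomes equivalent to the existence of an $F$-jumping number with fractional part in $(k/p^e, (k+1)/p^e]$, i.e., of an $F$-jumping number in $(k/p^e, (k+1)/p^e] + \N$, which is (2).

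The main obstacle will be Step 2: establishing the bijective dictionary between $\nu$-invariants and $F$-jumping numbers at every $e' \geq e_0$. Concretely, one must verify that as $J$ varies, the set $\{\nu^J_\fa(p^{e'})\}$ matches exactly the ceilings $\lceil p^{e'}\lambda\rceil - 1$ of $F$-jumping numbers, with no spurious elements and no missed jumping numbers; this is essentially what Definition \ref{def-stable-exp} should encode at $e' = e_0$, and it must be propagated to $e' = e_0 + e$ via standard base-$p$ arguments from \cite{BMSm2008} and \cite{SchTucTI}. The translation by $\N$ appearing in (2) matches the integer part $\lfloor \nu/p^{e_0+e} \rfloor$ that is discarded when passing to roots in $[0,1)$ in (1).
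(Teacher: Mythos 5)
Your Step 1 reformulation of (1) in terms of $\nu$-invariants at level $e_0+e$ via Theorem \ref{thm-roots-of-be} is correct, and so is the ceiling-function arithmetic you carry out. The gap is in Step 2. You assert that for a stable exponent $e_0$ and every $e' \geq e_0$ the set $\nu^\bullet_\fa(p^{e'})$ coincides with $\{\lceil p^{e'}\lambda\rceil - 1 : \lambda \in FJ(\fa)\}$, attributing this to Definition \ref{def-stable-exp}. But that definition, together with Lemma \ref{stable-pe}, only identifies $\cC^{e_0+e}_R \cdot \fa^m$ with a test ideal at exponents $m$ that are \emph{multiples} of $p^{e_0}$: namely $\cC_R^{e_0+e}\cdot\fa^{np^{e_0}} = \tau(\fa^{n/p^e})$. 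For $m$ strictly between $np^{e_0}$ and $(n+1)p^{e_0}$ one only has the a priori containment $\cC^{e_0+e}_R\cdot\fa^m \subseteq \tau(\fa^{m/p^{e_0+e}})$, which can be strict, and stability gives no control over \emph{where} inside such a block the decreasing chain $\cC^{e_0+e}_R\cdot\fa^\bullet$ actually drops. In particular a $\nu$-invariant of level $e_0+e$ need not sit at $\lceil p^{e_0+e}\lambda\rceil - 1$, and a single $F$-jumping number can produce more than one drop in the block. The references you cite control the limiting behavior $\nu^J_\fa(p^e)/p^e \to c^J(\fa)$, not the position of the individual terms, so they do not supply the bijection either. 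For a principal ideal the bijection is true because $\cC^e_R\cdot\fa^n = \tau(\fa^{n/p^e})$ for every $n$ (this is Corollary \ref{principal-ideal-cor}), but that identity fails for non-principal $\fa$ and is precisely what the passage to a general ideal costs you.

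What actually makes the theorem go through is a strictly weaker, block-wise statement that the paper isolates as Lemma \ref{N-to-testideal}: since the chain $\cC^{e_0+e}_R\cdot\fa^\bullet$ is decreasing, there is a $\nu$-invariant somewhere in $\{\,|\gamma|p^{e_0}+sp^{e_0+e},\,\dots,\,(|\gamma|+1)p^{e_0}+sp^{e_0+e}-1\,\}$ if and only if the two endpoint ideals differ, and by Lemma \ref{stable-pe} the endpoints are exactly $\tau(\fa^{s+|\gamma|/p^e})$ and $\tau(\fa^{s+(|\gamma|+1)/p^e})$. That gives the equivalence with the existence of an $F$-jumping number in $\left(\frac{|\gamma|}{p^e}, \frac{|\gamma|+1}{p^e}\right] + s$ without ever locating the $\nu$-invariant inside the block. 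Since (1) only asks whether a root lands in a window of length $p^{-e}$, i.e.\ whether the block contains a $\nu$-invariant at all, this is exactly the granularity the theorem needs. Replacing your claimed bijection with this endpoint comparison of a $p^{e_0}$-long chain recovers the paper's proof; without that replacement the argument as written does not close.
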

Our second goal is to generalize some the results of Bitoun from \cite{Bitoun2018}, which we describe briefly. First of all observe that as $e$ gets bigger the approximating polynomials of \Mustata \ give increasingly better approximations to the F-jumping numbers of $f$. We would expect then that if we consider the direct limit $N_f= \varinjlim_e N^e_f$ one may be able to recover the F-jumping exponents of $f$ from this module only. To state Bitoun's theorem let us introduce some notation.

Given a $p$-adic integer $\beta \in \Z_p$ we let $\beta_i$ be the unique integers with $0 \leq \beta_i < p$ such that $\beta = \beta_0 + p \beta_1 + p^2 \beta_2 + \cds. $ We then define
$$(N_f)_\beta := \{ u \in N_f : s_{p^i} \cdot u = \beta_i u \text{ for all } i\}.$$
Bitoun's result is then as follows.

\begin{theorem*}
	[\cite{Bitoun2018}] We have a decomposition $N_f = \bigoplus_{\beta \in \Z_p} (N_f)_\beta$ and, moreover,
	$$\{\beta \in \Z_p : (N_f)_\beta \neq 0 \} = - FJ(\fa) \cap (0,1] \cap \Z_{(p)},$$
	where $FJ(\fa)$ is the collection of $F$-jumping numbers of $\fa$. 
\end{theorem*}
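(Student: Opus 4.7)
The plan is to pass \Mustata's finite-level theorem to the direct limit $N_f = \varinjlim_e N^e_f$, treating separately the direct sum decomposition and the characterization of the support.

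For the decomposition, the structural input is that the operators $s_{p^i}$ on $N_f$ pairwise commute and each satisfy $s_{p^i}^p = s_{p^i}$. Each transition map $N^e_f \to N^{e+1}_f$ is linear over $A_e := \F_p[s_{p^0}, \ldots, s_{p^{e-1}}]/(s_{p^i}^p - s_{p^i}) \cong \F_p^{p^e}$, so it sends each $(N^e_f)_\alpha$ into $\bigoplus_{\alpha_e \in \F_p}(N^{e+1}_f)_{(\alpha, \alpha_e)}$. The sum $\sum_\beta (N_f)_\beta$ is direct because any finite relation among eigenvectors with distinct $\beta$'s can be separated at a level $e$ large enough that the truncations $\beta^{(e)} \in \F_p^e$ are distinct, using the $A_e$-eigenspace projectors. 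Spanning follows by lifting any $u \in N_f$ to $u_{e_0} \in N^{e_0}_f$, using the finite $A_{e_0}$-eigenspace decomposition, and iteratively refining each piece via the transition maps to express $u$ as a finite sum of true $\beta$-eigenvectors.

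For the characterization of the support, $(N_f)_\beta \neq 0$ is equivalent to $(N^e_f)_{\beta^{(e)}} \neq 0$ for a cofinal set of $e$; the forward direction follows by projecting a nonzero lift onto the $\beta^{(e)}$-eigenspace at each level. By \Mustata's theorem, nonvanishing at level $e$ is equivalent to the existence of an F-jumping number of $f$ in the interval
$$I_e(\beta) = \bigg(\frac{\beta_0 + p\beta_1 + \cds + p^{e-1}\beta_{e-1}}{p^e}, \frac{\beta_0 + p\beta_1 + \cds + p^{e-1}\beta_{e-1} + 1}{p^e}\bigg].$$
Since F-jumping numbers in $(0,1]$ form a finite set \cite{BMSm2008, SchTucTI} and the intervals $I_e(\beta)$ have length $1/p^e$, for $e$ large they must all contain a single common F-jumping number $\lambda$. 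A careful analysis of base-$p$ expansions and $p$-adic reductions identifies the $\beta \in \Z_p$ that arise this way with $-\lambda$ for $\lambda \in FJ(\fa) \cap (0,1] \cap \Z_{(p)}$; the condition $\lambda \in \Z_{(p)}$ is forced by $\beta \in \Z_p \cap \Q$.

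The main obstacle I expect is the converse direction at the limit level: given $\lambda \in FJ(\fa) \cap (0,1] \cap \Z_{(p)}$ and the corresponding $\beta \in \Z_p$, one must produce a nonzero element of $(N_f)_\beta$ from the compatible family of nonzero $(N^e_f)_{\beta^{(e)}}$ furnished by \Mustata's theorem. This amounts to showing that the composites $(N^e_f)_{\beta^{(e)}} \to N^{e+1}_f \twoheadrightarrow (N^{e+1}_f)_{\beta^{(e+1)}}$ remain nonzero on a cofinal subsystem, which I anticipate requires a more refined analysis of the specific construction of $N^e_f$ — likely via distinguished eigenvectors arising from test-ideal differences $\tau(f^{\lambda - \epsilon})/\tau(f^\lambda)$ that survive the successive refinements.
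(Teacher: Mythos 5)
The statement you address is Bitoun's theorem for principal ideals, which the paper cites as background rather than reproving; but the machinery of Sections \ref{scn-Ap-and-modules}--\ref{scn-A-module-N}, which proves the generalization to arbitrary ideals, shows where your argument has gaps. Your spanning argument for the direct-sum decomposition is incomplete: you propose to decompose a lift $u_{e_0} \in N^{e_0}_f$ over $\cA_p^{e_0}$ and ``iteratively refine each piece via the transition maps,'' but you give no reason this process terminates in finitely many pure eigenvectors. Without a bound on $\#\{\alpha \in \F_p^e : (N^e_f)_\alpha \neq 0\}$ uniform in $e$, an element could fragment indefinitely across levels and never be written as a finite sum of true eigenvectors. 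The paper supplies this bound through Proposition \ref{bddimpliesdiscrete} (boundedness implies discreteness) together with Proposition \ref{N-is-discrete}, whose proof relies on the discreteness of $F$-jumping numbers via Lemma \ref{N-to-testideal} --- so the finiteness of $F$-jumping numbers, which you invoke only for the support characterization, is already essential for the decomposition itself.

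On the converse direction of the support characterization, you correctly flag the obstacle --- that nonvanishing of each finite-level eigenspace $(N^e_f)_{\beta^{(e)}}$ does not immediately produce a nonzero element of the limit --- but you do not resolve it. The paper's resolution is Lemma \ref{zero_remains_zero}: the induced map $N^e_\alpha \to N^d_{(\alpha,\beta)}$ is zero if and only if the target $N^d_{(\alpha,\beta)}$ is itself zero. This is proved from the concrete description of the eigenspaces as direct sums of test-ideal quotients $D^e_R \cdot \fa^n / D^e_R \cdot \fa^{n+1}$ with transition maps given, up to a unit, by multiplication by explicit powers of $f$ (Theorem \ref{thm-summands-of-Ne}, Remark \ref{Qe-to-Qe+1-rmk}). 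Combined with the finite generation of $N^1_f$ over $R$, this yields the needed nonvanishing in the limit (cf.\ the proof of Proposition \ref{when-alpha-is-root}). You anticipate that ``a more refined analysis of the specific construction of $N^e_f$'' is required but do not carry it out, so the argument is incomplete at precisely the point you identify.
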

In \cite{Bitoun2018} the Bernstein-Sato polynomial of $\fa$ is defined as an ideal in the algebra $\Fun^{cts}(\Z_p, \F_p)$ of continuous functions from $\Z_p$ to $\F_p$. A notion of root for such an ideal is also developed in such a way that the $p$-adic numbers $\beta$ for which $(N_f)_\beta \neq 0$ are the roots of the Bernstein-Sato polynomial. 
In Section \ref{scn-Ap-and-modules} we develop some abstract theory with the goal of generalizing Bitoun's result to arbitrary $\fa$, which we achieve in Section \ref{scn-A-module-N}. For simplicity we have chosen to sidestep Bitoun's definition of Bernstein-Sato polynomial and, instead, we define its roots directly, which we call Bernstein-Sato roots.
\begin{thmdef*} \
	\begin{enumerate}[(a)]
		\item (Prop. \ref{N-is-discrete})We have a decomposition $N_\fa = \bigoplus_{\beta \in \Z_p} (N_\fa)_\beta$ and, moreover, the set $BS(\fa) := \{\beta \in \Z_p : (N_\fa)_\beta \neq 0\}$ is finite. We call an element of $BS(\fa)$ a Bernstein-Sato root.
		
		\item (Thm. \ref{BSroots-are-rational})The Bernstein-Sato roots of $\fa$ are rational and negative.
		
		\item (Thm. \ref{thm-bsroots-and-fjn}) We have
		$$BS(\fa) + \Z = -FJ(\fa)\cap \Z_{(p)} + \Z$$
		where $FJ(\fa)$ is the collection of $F$-jumping numbers of $\fa$.
	\end{enumerate}
\end{thmdef*}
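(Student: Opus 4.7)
The plan is to extend Bitoun's arguments for the principal case to arbitrary ideals by passing the finite-level statements through the direct limit $N_\fa = \varinjlim_e N^e_\fa$. For part (a), I would first define $(N_\fa)_\beta := \varinjlim_e (N^e_\fa)_{\beta^{(e)}}$ for each $\beta = \sum_i \beta_i p^i \in \Z_p$, where $\beta^{(e)} = (\beta_0, \ldots, \beta_{e-1}) \in \F_p^e$; the transition maps in this subsystem are well defined because $N^e_\fa \to N^{e+1}_\fa$ is $\F_p[s_{p^0}, \ldots, s_{p^{e-1}}]$-linear and therefore sends the $\beta^{(e)}$-eigenspace into $\bigoplus_{j\in\F_p}(N^{e+1}_\fa)_{(\beta^{(e)},j)}$. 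Commuting the direct limit past the finite-level multi-eigenspace decomposition, together with the identification $\Z_p = \varprojlim_e \F_p^e$, then yields $N_\fa = \bigoplus_{\beta \in \Z_p} (N_\fa)_\beta$. For finiteness of $BS(\fa)$, I would fix a stable exponent $e_0$ and apply Theorem \ref{thm-roots-approx-fjn}: at each level $e_0+e$ the number of nonzero multi-eigenspaces is bounded by $\#\bigl(FJ(\fa) \bmod \Z\bigr)$, and the latter is finite since $F$-jumping numbers are discrete rationals with bounded denominator by \cite{SchTucTI}; any $\beta \in BS(\fa)$ is determined by the coherent choice of one truncation at each level, hence $|BS(\fa)|$ is bounded by the same quantity.

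For part (c), I would obtain the identification $BS(\fa) + \Z = -FJ(\fa) \cap \Z_{(p)} + \Z$ by passing Theorem \ref{thm-roots-approx-fjn} to the limit. Given $\beta \in BS(\fa)$, the truncations $\beta^{(e_0 + e)}$ determine via the formula $\alpha \mapsto \sum_i \alpha_i p^{i-(e_0+e)}$ a nested sequence of intervals of length $p^{-(e_0+e)}$, each (by the theorem) containing a representative mod $\Z$ of some $F$-jumping number. By discreteness of $FJ(\fa)$, for $e$ large these intervals must contain the same $F$-jumping number class $\bar\lambda$; the fact that the digits assemble into a bona fide $p$-adic integer $\beta$ forces $\bar\lambda \in \Z_{(p)}$, and then $\beta \equiv -\bar\lambda \pmod{\Z}$. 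Conversely, any $\lambda \in FJ(\fa) \cap \Z_{(p)}$ has an eventually-nonzero base-$p$ expansion which, via the same formula, determines a unique $\beta \in \Z_p$ such that every truncation $\beta^{(e_0+e)}$ satisfies the nonvanishing condition of Theorem \ref{thm-roots-approx-fjn}, giving $\beta \in BS(\fa)$.

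Part (b) then follows as a corollary of (c). Rationality is immediate: each $\beta \in BS(\fa)$ lies in $-\Z_{(p)} + \Z \subseteq \Q$, so $\beta \in \Q \cap \Z_p = \Z_{(p)}$. For negativity, I would observe that the representative of the class $-\lambda + \Z$ produced from $\lambda \in FJ(\fa) \cap (0,1]$ by the eventually-nonzero base-$p$ construction has infinitely many nonzero digits in $\Z_p$, and a rational element of $\Z_p$ with eventually-nonzero $p$-adic expansion is necessarily negative (non-negative integers are characterized by eventually-zero expansions).

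The hardest step will be the digit bookkeeping at the heart of part (c): one must reconcile the eventually-nonzero base-$p$ convention used to match tuples $\alpha \in \F_p^e$ to $F$-jumping numbers (via Theorem \ref{thm-roots-approx-fjn}) with the standard $p$-adic expansion of the negative rational class $-\lambda \pmod{\Z}$, and verify that the correspondence is bijective onto $FJ(\fa) \cap \Z_{(p)}$ modulo $\Z$. A secondary subtlety is confirming that the limit eigenspace $(N_\fa)_\beta$ is nonzero precisely when every finite truncation is, rather than collapsing in the colimit; this should follow from the uniform bound given by the stable exponent, together with the observation that the transition maps respect the eigenspace filtration in a compatible way.
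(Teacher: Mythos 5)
Your outline correctly identifies the high-level strategy (pass the finite-level information through $\varinjlim N^e_\fa$ and match truncated eigenvalues to $F$-jumping numbers via the test-ideal criteria), but there are three genuine gaps, and the logical ordering of parts (b) and (c) is backwards relative to what the arguments can actually support.

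\textbf{Part (a): the decomposition is not formal.} You claim that ``commuting the direct limit past the finite-level multi-eigenspace decomposition'' yields $N_\fa = \bigoplus_{\beta \in \Z_p}(N_\fa)_\beta$. This is false in general: take $M^e = \F_p^{p^e}$ with basis $\{e_\alpha : \alpha \in \F_p^e\}$ and transition map $e_\alpha \mapsto \sum_{j\in\F_p} e_{(\alpha,j)}$. Then every $\varinjlim_e M^e_\beta$ is one-dimensional (uncountably many nonzero ``roots''), while $\varinjlim_e M^e$ is countable-dimensional, so the sum $\bigoplus_\beta \varinjlim M^e_\beta$ cannot map isomorphically onto it; in fact the limit has no nonzero pure elements. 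The decomposition is therefore a \emph{consequence of boundedness}, not of formal limit manipulation. The paper isolates exactly this in Proposition \ref{bddimpliesdiscrete}, which is the real content of (a); you need to prove something like it, not assume it. You do flag a ``secondary subtlety'' at the end, but it is the primary one.

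\textbf{Part (c): the intervals are not nested.} Your argument hinges on ``the truncations $\beta^{(e_0+e)}$ determine \ldots\ a nested sequence of intervals of length $p^{-(e_0+e)}$.'' They do not. The $p$-adic digit $\beta_i$ corresponds, via $|\alpha|/p^{e_0+e}$, to the $(e_0+e-i)$-th digit of the base-$p$ expansion of the approximating rational, so the digits appear in reverse order: increasing $e$ by $1$ prepends a new \emph{leading} digit $\beta_{e_0+e}/p$ to the real number rather than refining it. Concretely, $|\beta^{(e_0+e)}|/p^{e_0+e}$ and $|\beta^{(e_0+e+1)}|/p^{e_0+e+1}$ can differ by close to $1$. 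Consequently there is no nested sequence, and hence no immediate stabilization or rationality conclusion; your step ``the fact that the digits assemble into a bona fide $p$-adic integer $\beta$ forces $\bar\lambda \in \Z_{(p)}$'' does not follow (every element of $\Z_p$ has a digit expansion, so this is not a constraint). The paper circumvents this by first establishing that $\beta$ is rational, hence has an eventually periodic expansion; looking at period-length blocks then does produce a fixed real number $\lambda$ (Lemma \ref{zp-to-padic-lemma}), and Lemma \ref{N-to-testideal} applied along these blocks yields (c). Without periodicity in hand your (c) argument does not close.

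\textbf{Part (b): negativity cannot be read off from (c), and the logical dependency is reversed.} Statement (c) is an equality modulo $\Z$, so knowing $\beta + \Z = -\lambda + \Z$ tells you nothing about the sign of $\beta$ itself — you would still need to rule out $\beta = -\lambda + n$ for $n > 0$. Your observation that the specific ``eventually nonzero'' representative of the class is negative is correct, but it is not what needs to be proved: you need that the actual Bernstein–Sato root is that representative, and (c) does not say so. The paper handles negativity with an independent test-ideal argument (the contradiction via Corollary \ref{cor-eiri-sequence} when $m \geq p^d\lambda$). For rationality, the paper uses a dynamical argument (Lemma \ref{root-dynamics}: if $p\alpha + \Z$ contains a root, so does $\alpha + \Z$) combined with finiteness of $BS(\fa)$ to force eventual periodicity. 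This is done \emph{before} and \emph{used in} the proof of (c); your proposal to prove (c) first and deduce rationality is therefore not merely a reorganization but depends on the unnested-intervals argument that fails.

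In short, the plan is right in spirit but omits the three hard steps: a boundedness argument to justify the decomposition in (a), the rationality/periodicity input needed to make the digit bookkeeping in (c) cohere, and a separate negativity argument in (b).
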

The definition of Bernstein-Sato roots is better behaved than one might expect at first glance. Firstly, in a certain sense the definition given above is compatible with the one in characteristic-zero (see Subsection \ref{subscn-first-rem}). Moreover, in \cite{QG19b} we show, using results from \cite{BMSa06b}, that if $\fa \sq \Z[x_1, \ds, x_n]$ is a monomial ideal then the set of roots of $b_{\fa_\C}(s)$ (the Bernstein-Sato polynomial of the expansion of $\fa$ to $\C[x_1, \ds, x_n]$) coincides with the set of Bernstein-Sato roots (in the above sense) of $\fa_p$, the image of $\fa$ in $\F_p[x_1, \ds, x_n]$, for $p$ large enough. In Proposition \ref{prop-BSroots-newChar} we are also able to give a characterization of the Bernstein-Sato roots of $\fa$ purely in terms of the $\nu$-invariants of \Mustata, Takagi and Watanabe \cite{MTW}. 

Finally, let us remark that in \cite{Bitoun2018} Bitoun is able to give $N_f$ a unit $F$-module structure. This was later generalized by St\"abler to the setting of $F$-regular Cartier modules in \cite{StabAG}. We do not pursue the question of whether a similar statement can be made for $N_\fa$ in this paper.

Let us set up some notation. Given a $k$-algebra $R$ we denote by $D_{R|k}$, or simply by $D_R$, its ring of $k$-linear differential operators in the sense of Grothendieck \cite[\S16.8]{EGAIV}. We say that a ring $R$ of characteristic $p >0$ is $F$-finite if $R$ is finite as a module over its subring $R^p$. Whenever $R$ is an $F$-finite ring of characteristic $p>0$ we write $D^e_R := \End_{R^{p^e}}(R)$; in this setting we have that $D_{R|k} = \cup_{e = 0}^\infty D^e_R$ for every perfect field $k$ contained in $R$ \cite{Yek} \cite[Rmk. 2.5.1]{SVdB97} (if no reference to $k$ is made, we always take $k = \F_p$). 

We will use multi-index notation. That is, given a tuple $\ul{b} = (b_1, \ds, b_r)$ of integers and a tuple $ g= (g_1, \ds, g_r)$ of elements in a ring we have $g^{\ul{b}} = g_1^{b_1} \cds g_r^{b_r}$. We also use multi-index notation on binomial coefficients: given another tuple $\ul{a}$ we have ${\ul{a} \choose \ul{b}} := \prod_{j=1}^r {a_j \choose b_j}$. We denote by $\mathbbm{1}$ the tuple $\mathbbm{1} := (1,1,\ds, 1)$. Finally, we denote $|\ul{b}| = b_1 + \cds + b_r$.

\subsection*{Acknowledgments}
I would like to thank Josep \`{A}lvarez-Montaner, Alberto F. Boix, Zhan Jiang, Jack Jeffries, Mircea \Mustata, Kenta Sato, Kohsuke Shibata, Karen Smith, Axel St\"{a}bler and Shunsuke Takagi for many enjoyable discussions and great suggestions about this project. I am especially grateful to Shunsuke Takagi for suggesting this problem to me, as well as for his guidance and encouragement. 
\section{Background}

\subsection{V-filtrations} \label{V-filt}

Let $S$ be a regular $F$-finite ring of characteristic $p>0$ and $I \sq S$ be an ideal. As mentioned previously, in this setting we have $D_S = \cup_{e = 0}^\infty D^e_S$ where $D^e_S := \End_{S^{p^e}}(S)$.

We begin by observing that the $V$-filtration of Kashiwara and Malgrange on the ring of differential operators can be defined in characteristic $p$. Moreover, in this setting the ideal $I$ induces decreasing filtrations $V$ on $D_S$ and all its subrings $D^e_S$ by setting, for all $e \geq 0$ and $i \in \Z$,
$$V^i D^e_S := \{\xi \in D^e_S | \xi \cdot I^j \sq I^{j + i} \text{ for all } j \in \Z\}$$
and similarly for $D_S$. We use the convention that $I^n = S$ whenever $n \leq 0$. Observe that the filtration $V$ is indexed by $\Z$ and decreasing.
We will only be concerned with $V$-filtrations arising in the following setting: let $R$ be an $F$-finite regular ring of characteristic $p>0$, $S =R[t_1, \ds , t_r]$ be a polynomial ring over $R$ and $I = (t_1, \ds, t_r) \sq S$. We can give $S$ an $\N_0$-grading by setting $\deg t_i = 1$ for all $i$. This induces a $\Z$-grading on all $D^e_S$ (since $S$ is $F$-finite) and thus on $D_S$. Given $d \in \Z$ we denote by $S_d$ (resp. $(D^e_S)_d$,  $(D_S)_d$) the set of homogeneous elements of degree $d$ in $S$ (resp. $D_S$, $D^e_S$). 
\begin{lemma} \label{V-filt-lemma}
Let $R$, $S$ and $I$ be as above and let $i \in \Z$. 
\begin{enumerate}[(a)]
	\item We have $V^i D^e_S = (D^e_S)_{\geq i}$ for all $e \geq 0$ and $V^i D_S = (D_S)_{\geq i}$.
	\item Suppose $i \geq 0$. Then $(D^e_S)_i = S_i (D^e_S)_0 = (D^e_S)_0 S_i$.
	\item Suppose $i \geq 0$. Then $V^i D^e_S = I^i V^0 D^e_S = V^0 D^e_S I^i$. 
\end{enumerate}
\end{lemma}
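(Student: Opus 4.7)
For part (a), one inclusion is straightforward: if $\xi$ is homogeneous of degree $d \geq i$ then $\xi \cdot S_k \subseteq S_{k+d}$, and since $I^j = \bigoplus_{k \geq j} S_k$ for $j \geq 0$ we obtain $\xi \cdot I^j \subseteq I^{j+d} \subseteq I^{j+i}$ for all $j \in \Z$. The reverse inclusion is the interesting direction. Take $\xi \in V^i D^e_S$ and decompose $\xi = \sum_d \xi_d$ into homogeneous components; note that each $\xi_d$ still lies in $D^e_S$, since the grading is preserved by the $S^{p^e}$-action. Suppose for contradiction that some $\xi_d$ with $d < i$ is nonzero, and pick a homogeneous $f_0 \in S_{j_0}$ with $\xi_d(f_0) \neq 0$. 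The $S^{p^e}$-linearity of $\xi_d$ gives $\xi_d(t_1^{p^e m} f_0) = t_1^{p^e m} \xi_d(f_0) \neq 0$ for every $m \geq 0$, so I can replace $f_0$ by a homogeneous element $f := t_1^{p^e m} f_0$ of degree $j = j_0 + p^e m$ taken arbitrarily large. Choosing $m$ so that $j + i \geq 1$, we have $f \in I^j$ and hence $\xi(f) \in I^{j+i}$. But $\xi_d(f) \in S_{j+d}$ is the degree-$(j+d)$ component of $\xi(f)$, and since $j + d < j + i$ this component must vanish, a contradiction. The statement for $D_S$ then follows by taking the union over $e$.

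Part (b) rests on the fact that $D^e_S$ is generated, as a left $S$-module, by homogeneous operators of non-positive degree. Explicitly one may take the divided-power derivatives $\partial^{[\underline{k}]}$ with $0 \leq k_\ell < p^e$ (of degree $-|\underline{k}|$), together with generators of $D^e_R$ extended coefficientwise to $S$ (all of degree $0$). This yields $(D^e_S)_i = \sum S_{i+|\underline{k}|} \cdot \eta_{\underline{k}}$ with $\deg \eta_{\underline{k}} = -|\underline{k}|$; for $i \geq 0$ the factorization $S_{i+|\underline{k}|} = S_i \cdot S_{|\underline{k}|}$ then gives $(D^e_S)_i = S_i \cdot (D^e_S)_0$. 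For the identity $(D^e_S)_i = (D^e_S)_0 \cdot S_i$ I argue by induction on the order of the differential operator. Given $\xi \in (D^e_S)_i$ of order $\leq n$, use the left form just established to write $\xi = \sum_j s_j \eta_j$ with $s_j \in S_i$ and $\eta_j \in (D^e_S)_0$ of order $\leq n$, and rearrange each summand as $s_j \eta_j = \eta_j s_j + [s_j, \eta_j]$. The term $\eta_j s_j$ belongs to $(D^e_S)_0 \cdot S_i$; the commutator $[s_j, \eta_j]$ still lies in $(D^e_S)_i$ but has order at most $n-1$, so by induction it also belongs to $(D^e_S)_0 \cdot S_i$. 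The base case $n = 0$ is immediate, since an order-zero operator in $(D^e_S)_i$ is multiplication by some $s \in S_i$, which equals $1 \cdot s$.

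Part (c) is then a formal consequence of (a) and (b). By (a), $V^i D^e_S = \bigoplus_{j \geq i}(D^e_S)_j$, and (b) gives $(D^e_S)_j = S_j \cdot (D^e_S)_0 = S_i \cdot S_{j-i} \cdot (D^e_S)_0 \subseteq I^i \cdot V^0 D^e_S$, yielding the inclusion $V^i D^e_S \subseteq I^i V^0 D^e_S$. The reverse $I^i V^0 D^e_S \subseteq V^i D^e_S$ is immediate from the degree-shift interpretation: multiplication by $I^i = \bigoplus_{k \geq i} S_k$ shifts degree by at least $i$, and elements of $V^0 D^e_S = (D^e_S)_{\geq 0}$ shift degree by at least $0$. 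The identity $V^i D^e_S = V^0 D^e_S \cdot I^i$ is established symmetrically using the right form of (b).

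The main obstacle is the second half of (b), where one must commute $s \in S_i$ past an element of $(D^e_S)_0$ while staying inside $(D^e_S)_0 \cdot S_i$; the induction closes because the commutator of a function with a differential operator of order $n$ has order at most $n-1$.
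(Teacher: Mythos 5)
Your proofs of (a) and (c) follow the paper essentially verbatim; the interest lies in part (b).

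For the first identity $(D^e_S)_i = S_i (D^e_S)_0$ your argument is the same in substance as the paper's: reduce to the decomposition $D^e_S$ as a free left $S$-module on homogeneous generators of nonpositive degree and factor $S_{i+|\underline{k}|} = S_i S_{|\underline{k}|}$. (The paper phrases this via the operators $\sigma_\alpha$ after reducing to $R = \F_p$; you phrase it via $\partial^{[\underline{k}]}$, which is the same decomposition in a different basis.)

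For the second identity $(D^e_S)_i = (D^e_S)_0 S_i$ your route is genuinely different. The paper's proof is an explicit and somewhat ad hoc rewriting: it uses the distinguished operator $\tau = \sigma_{(p^e-1,\dots,p^e-1)}$ to express each $x^\gamma \sigma_\alpha$ as $(x^{\gamma_0}\tau x^\eta)\cdot x^\nu$ with the first factor of degree zero and $x^\nu \in S_i$, verifying a degree inequality by hand. Your proof instead commutes $s_j$ past $\eta_j$ and inducts on the order of the differential operator, using that $[s,\eta]$ has strictly smaller order. This is more conceptual, applies without passing through the special $\tau$ and $\sigma_\alpha$, and reduces to the well-understood commutator structure of $D^e_S$. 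The one point that deserves to be made explicit is the assertion that the left-hand decomposition $\xi = \sum_j s_j \eta_j$ can be chosen with $\operatorname{ord}(\eta_j) \leq \operatorname{ord}(\xi)$: this is needed so that the commutators $[s_j,\eta_j]$ really drop to order $\leq n-1$ and feed the induction. It is true, because the $\partial^{[\underline{k}]}$ realize the order filtration on the nose — more precisely, in the canonical expansion $\xi = \sum_{\underline{k}} g_{\underline{k}} \partial^{[\underline{k}]}$ one has $\operatorname{ord}(\xi) = \max\{|\underline{k}| : g_{\underline{k}} \neq 0\}$, which one verifies by applying iterated commutators $[\cdot, t_{j_1}], \dots, [\cdot, t_{j_m}]$ to reduce a top-degree $\partial^{[\underline{k}_0]}$ to the identity while killing the lower terms. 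Since you are already working with the $\partial^{[\underline{k}]}$ basis in the first half of (b), this is a short addendum rather than a new idea, but without it the induction step is not closed. With that detail supplied, your proof is correct and, I would say, tidier than the paper's computation.
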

\begin{proof}
	We first prove (a). Let $e>0$, and we will show $V^i D^e_S = (D^e_S)_{\geq i}$. The result for $D_S$ will follow.
	
	The inclusion $(\supseteq)$ is clear. For $(\sq)$ let $\xi \in V^i D^e_S$ and let $\xi = \sum_d \xi_d$ where $\xi_d$ is homogeneous of degree $d$. By subtracting we may assume that $d < i$ whenever $\xi_d \neq 0$. Suppose for a contradiction that $\xi \neq 0$ and let $d_0 := \max \{d : \xi_d \neq 0\}$. As $\xi_{d_0} \neq 0$ there exists some homogeneous $g \in S$ of degree $m$ such that $\xi_{d_0}(g) \neq 0$. By multiplying by $t_1^{p^{le}}$ for some sufficiently large $l$ we may assume that $m > -d_0$. 
	
	On the one hand, since $\xi \in V^i D^e_S$ we have $\sum \xi_d(g) = \xi(g) \in I^{m + i}$. On the other hand, the homogeneous component $\xi_{d_0}(g)$ of $\xi(g)$ has degree $m + d_0 < m + i$. As $\xi_{d_0}(g) \neq 0$, this is a contradiction.
	
	Let us now prove (b). Let $P = \F_p[t_1, \ds, t_r]$ with $\deg t_i = 1$, so that $S = R \otimes_{\F_p} P$. There is a natural isomorphism $D^e_R \otimes_{\F_p} D^e_P \xrightarrow{\sim} D^e_S$ which identifies $(D^e_S)_i \cong D^e_R \otimes_{\F_p} (D^e_{P})_i$ for all $i \in \Z$ and thus it suffices to prove the statement for $S = P$, i.e. $R = \F_p$.
	
	We thus assume $S = P$. Let $L := \{\alpha \in \N^r : 0 \leq \alpha_j < p^e \}$ and, given $\alpha \in L$ denote $|\alpha| := \alpha_1 + \cds + \alpha_r$. Observe that an operator in $D^e_S$ is uniquely determined by its action on monomials $t^\alpha$ with $\alpha \in L$. Given $\alpha \in L$ we define $\sigma_\alpha \in D^e_S$ by $\sigma_\alpha \cdot t^\alpha = 1$ and $\sigma_\alpha \cdot t^\beta = 0$ for all $\alpha \neq \beta \in L$. With this notation we have $D^e_S = \bigoplus_{\alpha \in L} S \sigma_\alpha$ and therefore $(D^e_S)_i = \bigoplus_{\alpha \in L} S_{|\alpha| + i} \sigma_\alpha = S_i (D^e_S)_0$, which gives the first equality.
	
	We now prove the second equality. First observe that the containment $(D^e_S)_0 S_i \sq (D^e_S)_i$ is clear. Next notice that by our previous discussion $(D^e_S)_i$ is spanned over $\F_p$ by operators of the form $x^\gamma \sigma_\alpha$ where $|\gamma| = |\alpha| + i$ (where we use multi-index notation) and thus it suffices to show that such an operator is in $(D^e_S)_0 S_i$. Let $\gamma_0, \gamma_1$ be multi-exponents with $\gamma_0 \in L$ such that $x^\gamma = x^{p^e \gamma_1} x^{\gamma_0}$. Let $\beta \in L$ be such that $x^\alpha x^\beta = x_1^{p^e-1} \cds x_n^{p^e-1}$, and let $\tau = \sigma_{(p^e - 1, \ds, p^e - 1)}$ (an operator of degree $-r(p^e-1)$). Observe that
	$$x^\gamma \sigma_\alpha = x^\gamma \tau x^\beta = x^{\gamma_0} \tau x^{p^e \gamma_1} x^\beta.$$
	We claim that the degree of $x^{p^e \gamma_1} x^\beta$ is at least $i$, i.e. that $p^e|\gamma_1| + |\beta| \geq i$. This follows because
	\begin{align*}
	i + |\alpha| & = p^e |\gamma_1| + |\gamma_0| \\
		& \geq p^e |\gamma_1| + r(p^e -1) \\
		& = p^e |\gamma_1| + |\beta| + |\alpha|.
	\end{align*}
	From the claim we conclude that there exist monomials $x^{\eta}$ and $x^\nu$ with $|\nu| = i$ and $x^{p^e \gamma_1} x^\beta = x^\eta x^\nu$. By the above equalities, we have $x^\gamma \sigma_\alpha = x^{\gamma_0} \tau x^\mu x^\nu$. The degree of the operator $x^{\gamma_0} \tau x^\mu$ is zero and thus $x^\gamma \sigma_\alpha \in (D^e_S)_0 S_i$.  
	
	Part (c) follows by combining (a) and (b) and the observation $I^i = S_{\geq i}$.
\end{proof}

\pagebreak

\subsection{The ring $\cC_R$ and test ideals}

Let $R$ be a regular $F$-finite ring of characteristic $p>0$. We denote by $F: R \to R$ the Frobenius endomorphism on $R$ and, given an integer $e > 0$, we denote by $F^e$ its $e$-th iterate. We define $F^e_*: \Mod(R) \to \Mod(R)$ to be the functor that restricts scalars via $F^e$. The $R$-module $F^e_* R$ is then equal to $R$ as an abelian group and we will denote an element $r \in R$ as $F^e_* r$ when viewed as an element of $F^e_* R$. In this way, the $R$-module action on $F^e_* R$ is given by $s \cdot F^e_* r = F^e_* (s^{p^e} r)$ for all $s, r \in R$. Given an ideal $I \sq R$ and $e > 0$ we define an ideal $I^{[p^e]} := (f^{p^e}: f \in I)$.
We begin by defining the ring $\cC_R$, which will act on $R$. This definition is taken from \cite[Def. 2.2, Example 2.3]{Bli13}.
\begin{definition}
	We denote $\cC^e_R := \Hom_R(F^e_* R , R)$. The algebra $\cC_R$ of Cartier linear operators on $R$ is given by $\cC_R := \bigoplus_{e \geq 0} \cC^e_R$ as an abelian group with multiplication defined as follows: if $\alpha \in \cC^e$ and $\beta \in \cC^d$ then $\alpha \cdot \beta := \alpha \circ F^e_* \beta \in \cC_R^{e + d}.$
\end{definition}
This makes $\cC_R$ into a noncommutative ring, which acts on $R$ on the left by $\phi \cdot f := \phi(F^e_* f) \text{ for } \phi \in \cC_R^e$ and $f \in R.$ In particular, given an ideal $I \sq R$ and an integer $e \geq 0$, $\cC^e_R \cdot I$ is the ideal generated by $\{\phi(F^e_* r) : \phi \in \cC^e_R, r \in I\}$. These ideals have been considered in the past with varying notation: see, for example, \cite{AMHNB},  \cite{AMBL} (where $\cC^e_R \cdot f$ is denoted $I_e(f)$) and \cite{BMSm-hyp} (where $\cC^e_R \cdot I$ is denoted $I^{[1/p^e]}$, which is also the notation used by \Mustata \ in \cite{Mustata2009}).

Recall that in our setting the ring $D_R$ of differential operators on $R$ is given by $D_R = \bigcup_{e \geq 0} D^e_R$ where $D^e_R := \End_{R^{p^e}}(R)$. Our next lemma relates the actions of $D^e_R$ and $\cC^e_R$. It is well-known to experts and parts of it are proved in the aforementioned references. We include a proof for completeness.
\begin{lemma} \label{caralgdiffop} \label{cartier-containment-lemma}
	Let $I,J \sq R$ be ideals and $e \geq 0$ be an integer. Then:
	\begin{enumerate}[(a)]
		\item We have $D^e_R \cdot I = (\cC^e_R \cdot I)^{[p^e]}.$
		
		\item One has $\cC^e_R \cdot I \sq J$ if and only if $I\sq J^{[p^e]}$. 
		
		\item One has $D^e_R \cdot I = D^e_R \cdot J$ if and only if $\cC^e_R \cdot I = \cC^e_R \cdot J$. 
	\end{enumerate}

\end{lemma}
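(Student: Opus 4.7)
The plan rests on one structural input: since $R$ is regular and $F$-finite, Kunz's theorem gives that $F^e\colon R\to R$ is flat, so $F^e_* R$ is a finitely generated projective $R$-module. Using this projectivity I would first establish the canonical identification
$$D^e_R = \End_{R^{p^e}}(R) = \End_R(F^e_* R) \cong F^e_* R \otimes_R \cC^e_R,$$
sending the pure tensor $F^e_* r \otimes \phi$ to the operator $F^e_* x \mapsto F^e_* r \cdot \phi(F^e_* x)$. Unpacking the $F^e_* R$-module structure $a\cdot F^e_* b = F^e_*(a^{p^e} b)$, this says that, as an operator on $R$, the tensor $F^e_* r \otimes \phi$ acts by the formula $x \mapsto r\cdot \phi(F^e_* x)^{p^e}$. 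This explicit description is what drives all three parts.

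For part (a), the inclusion $D^e_R \cdot I \sq (\cC^e_R \cdot I)^{[p^e]}$ drops out of the formula: for $x \in I$ we have $\phi(F^e_* x) \in \cC^e_R \cdot I$, so $r\cdot \phi(F^e_* x)^{p^e} \in (\cC^e_R \cdot I)^{[p^e]}$. For the reverse inclusion I would observe that by Freshman's dream $(\cC^e_R \cdot I)^{[p^e]}$ is generated as an ideal by the bare $p^e$-th powers $\phi(F^e_* x)^{p^e}$ with $\phi \in \cC^e_R$ and $x \in I$, and each such element is visibly $(1\otimes\phi)(x) \in D^e_R \cdot I$.

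For part (b), the workhorse is the general fact that for any projective $R$-module $M$ and ideal $J \sq R$ one has $JM = \{m \in M : \phi(m) \in J \text{ for all } \phi \in \Hom_R(M,R)\}$, which is proved by realizing $M$ as a summand of a free module and checking coordinatewise. Applied to $M = F^e_* R$, the hypothesis $\cC^e_R \cdot (x) \sq J$ says exactly that $F^e_* x \in J\cdot F^e_* R$, and the formula $a\cdot F^e_* b = F^e_*(a^{p^e} b)$ identifies $J\cdot F^e_* R = F^e_* J^{[p^e]}$. Combining the two, $\cC^e_R \cdot (x) \sq J$ if and only if $x \in J^{[p^e]}$, which is (b).

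For part (c), the direction $(\Leftarrow)$ follows by taking Frobenius powers in (a). For $(\Rightarrow)$, since $\id_R \in D^e_R$ we have $I \sq D^e_R \cdot I = D^e_R \cdot J = (\cC^e_R \cdot J)^{[p^e]}$ by (a), and then (b) yields $\cC^e_R \cdot I \sq \cC^e_R \cdot J$; the other inclusion is symmetric. The main obstacle I anticipate is setting up the identification $D^e_R \cong F^e_* R \otimes_R \cC^e_R$ cleanly and extracting the correct action formula, with the $p^e$-th power landing on $\phi(F^e_* x)$ rather than on $r$; once that formula is in hand, parts (a), (b) and (c) reduce to formal manipulations.
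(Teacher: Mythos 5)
Your argument is correct, and parts (a) and (c) follow essentially the same path as the paper: the identification $D^e_R \cong F^e_* R \otimes_R \cC^e_R$ and the action formula $F^e_* r \otimes \phi \colon x \mapsto r\,\phi(F^e_* x)^{p^e}$ are exactly what the paper uses for (a), and (c) in both treatments reduces to the observation that ideals with equal $p^e$-th Frobenius powers are equal (the paper applies a Frobenius splitting to $(\cC^e_R \cdot I)^{[p^e]} = (\cC^e_R \cdot J)^{[p^e]}$, whereas you route through (b); both work). Where you genuinely diverge is in (b): the paper's proof of $(\Rightarrow)$ is a one-liner that leans on (a), namely $I \sq D^e_R \cdot I = (\cC^e_R \cdot I)^{[p^e]} \sq J^{[p^e]}$, whereas you establish (b) independently via the general fact that for a finitely generated projective module $M$ one has $JM = \{m \in M : \phi(m) \in J \text{ for all } \phi \in \Hom_R(M,R)\}$, applied to $M = F^e_* R$ together with the identification $J \cdot F^e_* R = F^e_* J^{[p^e]}$. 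Your route is a bit longer but has the virtue of making (b) logically independent of (a); the paper's route is shorter but makes (b) a corollary of (a). Both are sound, and I note that your argument correctly handles the reduction to elements $\cC^e_R \cdot (x) = \{\phi(F^e_* x) : \phi \in \cC^e_R\}$, which does require the (easily verified) observation that precomposition with multiplication by $F^e_* r$ keeps $\phi$ inside $\cC^e_R$.
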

\begin{proof}
	We begin with (a). Let $\t{D}^e_R := \End_R(F^e_* R)$ and observe that the natural identification of $R$ and $F^e_*R$ identifies $D^e_R$ and $\t{D}^e_R$. Because $R$ is regular and $F$-finite, Kunz's theorem gives that $F^e_* R$ is locally free of finite rank. It follows that the map $F^e_*R \otimes_R \cC^e_R \to \t{D}^e_R$ given by 
	$$F^e_* r \otimes \phi \to [F^e_*s \mapsto F^e_* \big( r \phi(F^e_*s)^{p^e} \big)]$$
	is an isomorphism. We thus get $F^e_* (D^e_R \cdot I) = \t{D}^e_R(F^e_* I) = F^e_*( (\cC^e_R \cdot I)^{[p^e]} )$, which proves the statement.
	
	For (b), the direction $(\Leftarrow)$ is clear. For $(\Rightarrow)$ observe that $I \sq D^e_R \cdot I$ and $D^e_R \cdot I = (\cC^e_R \cdot I)^{[p^e]}$ by part (a). 
	
	For (c), the direction $(\Leftarrow)$ follows from part (a) so let us prove $(\Rightarrow)$. If $D^e_R \cdot I = D^e_R \cdot J$ then, by part (a), $(\cC^e_R \cdot I)^{[p^e]} = (\cC^e_R \cdot J)^{[p^e]}$. By applying a splitting of the Frobenius map $R \to F^e_* R$ to both sides of this equality, one gets $\cC_R^e \cdot I = \cC_R^e \cdot J$
\end{proof}
We now define the test ideal. As we will only be concerned with the case where $R$ is regular we will take the following description as our definition.
\begin{definition}
	Let $\fa \sq R$ be an ideal and $\lambda \in \R_{>0}$. Then the test ideal for $\fa$ with exponent $\lambda$ is 
	$$\tau(\fa^\lambda) := \bigcup_{e = 0}^\infty \cC^e_R \cdot \fa^{\lceil \lambda p^e \rceil}.$$
\end{definition}

We observe that the union on the right-hand side is an increasing union of ideals and that, since $R$ is noetherian, $\tau(\fa^\lambda) = \cC_R^d \cdot \fa^{\lceil \lambda p^d \rceil}$ for some $d$ large enough. If $\lambda < \mu$ it follows from the definition that $\tau(\fa^\lambda) \sq \tau(\fa^\mu)$. Moreover, given $\lambda$ there exists some $\epsilon > 0$ such that $\tau(\fa^\lambda) = \tau(\fa^\mu)$ for all $\mu \in [\lambda, \lambda + \epsilon]$ \cite[Cor. 2.16]{BMSm2008}. 
\begin{definition} \label{def-Fjn}
We say $\lambda$ is an $F$-jumping number for $\fa$ if for all $\epsilon > 0$ we have $\tau(\fa^{\lambda - \epsilon}) \neq \tau(\fa^{\lambda})$.
\end{definition}
The set of $F$-jumping numbers forms a discrete subset in $\R_{>0}$ and all $F$-jumping numbers are rational (see \cite[Thm. 3.1]{BMSm2008} for the case where $R$ is essentially of finite type over an $F$-finite field, and \cite[Thm. B]{SchTucTI} for the more general statement).

We will repeatedly use the following standard result about test ideals. We include a proof as it is straightforward with our assumptions and definitions.
\begin{proposition}[Skoda's theorem] \label{brsk-prop}
	 If $\fa$ is generated by $r$ elements and $\lambda \geq r$ then:
	\begin{enumerate}[(a)]
		\item For all $e \geq 0$ we have $\cC_R^e \cdot \fa^{\lceil p^e \lambda \rceil} = \fa \ \cC_R^e \fa^{\lceil p^e(\lambda - 1) \rceil}.$
		\item We have $\tau(\fa^{\lambda}) = \fa \ \tau(\fa^{\lambda -1}).$
		\item If $\lambda \geq r$ is an $F$-jumping number for $\fa$ then so is $\lambda - r$. 
	\end{enumerate}
\end{proposition}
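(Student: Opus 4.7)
I would combine a pigeonhole argument on monomial exponents with the Frobenius-twisted $R$-module structure on $F^e_* R$. Write $\fa = (f_1, \ldots, f_r)$ and set $N := \lceil p^e \lambda \rceil$; since $\lambda \geq r$, one has $N \geq r p^e$, and pigeonhole forces every monomial $f_1^{\alpha_1}\cdots f_r^{\alpha_r}$ of total degree $N$ to have some $\alpha_i \geq p^e$. This yields the ideal-level decomposition
\[
\fa^N \;=\; \sum_{i=1}^{r} f_i^{p^e}\, \fa^{N - p^e}\;=\;\sum_{i=1}^{r} f_i^{p^e}\, \fa^{\lceil p^e(\lambda - 1)\rceil},
\]
using $p^e \in \Z$ to absorb the integer shift through the ceiling. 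The key algebraic identity is $\phi\bigl(F^e_*(a^{p^e} g)\bigr) = a\cdot\phi(F^e_* g)$ for $\phi \in \cC^e_R$ and $a, g \in R$, which follows from $R$-linearity of $\phi$ together with the twisted action $a \cdot F^e_* g = F^e_*(a^{p^e} g)$. Applying this identity in both directions---the $\supseteq$ direction uses $a^{p^e} g \in \fa^N$ whenever $a \in \fa$ and $g \in \fa^{\lceil p^e(\lambda-1)\rceil}$---promotes the ideal decomposition above to the stated $\cC^e_R$-level equality.

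\textbf{Plan for (b):} Since the defining unions $\bigcup_e \cC^e_R \cdot \fa^{\lceil p^e \mu\rceil}$ are increasing in $e$ and $R$ is noetherian, I would choose a single $e$ large enough to simultaneously stabilize both $\tau(\fa^\lambda) = \cC^e_R \cdot \fa^{\lceil p^e \lambda\rceil}$ and $\tau(\fa^{\lambda - 1}) = \cC^e_R \cdot \fa^{\lceil p^e(\lambda - 1)\rceil}$. Then (a) applied at this $e$ yields (b) immediately.

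\textbf{Plan for (c):} I would argue by contradiction. Suppose $\lambda - r$ is not an $F$-jumping number, so some $\epsilon > 0$ gives $\tau(\fa^{\lambda - r - \epsilon}) = \tau(\fa^{\lambda - r})$; I would further shrink $\epsilon$ so that every exponent appearing below remains in the range where (b) applies. Iterating (b) a total of $r$ times on both the sequence $\lambda, \lambda - 1, \ldots, \lambda - r$ and on $\lambda - \epsilon, \lambda - 1 - \epsilon, \ldots, \lambda - r - \epsilon$ would yield $\tau(\fa^\lambda) = \fa^r \tau(\fa^{\lambda - r})$ and $\tau(\fa^{\lambda - \epsilon}) = \fa^r \tau(\fa^{\lambda - r - \epsilon})$; combined with the assumed equality, these force $\tau(\fa^\lambda) = \tau(\fa^{\lambda - \epsilon})$, contradicting that $\lambda$ is a jumping number. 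The main obstacle is controlling this $r$-fold iteration: naively, each application of (b) requires its exponent to be $\geq r$, which is tight at the start. To handle this I would work at the $\cC^e_R$-level for a single large $e$ and perform all $r$ peel-offs in one allocation, using a more careful pigeonhole that distributes $r$ factors of $f_i^{p^e}$ across the coordinates of each monomial's exponent vector before translating back to $\tau$'s.
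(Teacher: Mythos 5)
Your treatment of (a) and (b) matches the paper's argument: the pigeonhole observation is exactly the paper's identity $\fa^s = \fa^{[p^e]}\,\fa^{s-p^e}$ for $s \geq rp^e$, the Frobenius-twist identity $\cC^e_R\cdot(\fa^{[p^e]}J) = \fa\,\cC^e_R\cdot J$ does the rest, and (b) is obtained by stabilizing both unions at a common large $e$. Those two parts are fine.

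Part (c) is where things break down, and you were right to be suspicious of the iteration. The difficulty is worse than you anticipate, though, and the fix you propose does not repair it. First, the ``more careful pigeonhole'' fails: a monomial $f^{\ul{a}}$ of total degree $\geq rp^e$ in $r$ generators is only guaranteed \emph{one} coordinate with $a_i \geq p^e$, not $r$ Frobenius factors distributed across it, and indeed $\fa^{rp^e} \neq (\fa^{[p^e]})^r\fa^{0}$ in general. Concretely, with $\fa = (f_1,f_2)$ the monomial $f_1^{2p^e-1}f_2$ has degree $2p^e$ but is not in $(\fa^{[p^e]})^2 = (f_1^{2p^e}, f_1^{p^e}f_2^{p^e}, f_2^{2p^e})$ when the $f_i$ are algebraically independent. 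Tracking the exponent constraint in the one-step version, $\fa^s = \fa^{[p^e]}\fa^{s-p^e}$ requires $s \geq rp^e$ at every stage, so $\tau(\fa^{\mu}) = \fa^r\,\tau(\fa^{\mu-r})$ only holds for $\mu \geq 2r-1$; applying it to $\lambda - \epsilon$ then requires $\lambda > 2r-1$, not $\lambda \geq r$. Second, and more seriously, statement (c) as written is \emph{false}: for $\fa = (x,y) \subseteq \F_p[x,y]$ with $r = 2$, the $F$-jumping numbers are $\{2,3,4,\ldots\}$, so $\lambda = 3 \geq r$ is a jumping number while $\lambda - r = 1$ is not. The paper's own ``proof'' of (c) consists only of the phrase ``statements (b) and (c) follow,'' and the consequence that actually does follow from (a)/(b) --- and that the paper genuinely uses, e.g.\ in Corollary \ref{nu-invt-dynamics-cor} --- is the one-step statement: if $\lambda > r$ is an $F$-jumping number, then so is $\lambda - 1$. (One applies (b) to $\lambda$ and to $\lambda - \epsilon$ for $\epsilon$ small enough that $\lambda - \epsilon \geq r$, and the factor $\fa$ passes through the strict inclusion of test ideals in the correct direction.) Your iteration scheme does prove the $\lambda - r$ version under the stronger hypothesis $\lambda > 2r - 1$, but you should not expect to lower that threshold to $\lambda \geq r$.
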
 
\begin{proof}
	If $s \geq rp^e$ then we have $\fa^s = \fa^{[p^e]} \fa^{s - p^e}$. Using this we get
	$$\cC_R^e \cdot \fa^{\lceil \lambda p^e \rceil} = \cC_R^e \cdot \fa^{[p^e]} \fa^{\lceil (\lambda - 1)p^e \rceil} = \fa \ \cC_R^e \cdot \fa^{\lceil(\lambda - 1)p^e \rceil },$$
	which gives (a). Statements (b) and (c) follow.
\end{proof}
\section{The multi-eigenspace decomposition of the modules $N^e_\fa$} \label{scn-multi-eigen-dec}

In this section we follow \Mustata's approach from \cite{Mustata2009} to develop a Bernstein-Sato theory in positive characteristic. This technique relies on the construction of certain modules $N^e_\fa$ which carry an action of certain operators $s_{p^i}$, and subsequent analysis of this action.

Fix a nonzero ideal $\fa \sq R$ and let us choose generators $f_1, \ds, f_r$ for $\fa$. To this data we will associate some modules $N^e_\fa$, where $e \geq 1$ is an integer.

Let us first set up some notation. We let $t = (t_1, \ds, t_r)$ be a set of variables and by $R[t]$ we denote the polynomial ring $R[t] := R[t_1, \ds, t_r]$. Let $J := (f_1 - t_1, \ds, f_r - t_r) \sq R[t]$, which is the ideal of the graph $X \to X \times \A^r$ of the functions $f_1, \ds, f_r$, where $X = \Spec R$. 

We consider the local cohomology module $H^r_J R[t]$. Via the \v{C}ech complex on the generators $(f_i - t_i)$ for $J$ we write this module as $H^r_J R[t] = \bigoplus_{\nu \in \N^r}^\infty R \delta_\nu$ where, if $\nu = (\nu_1, \ds, \nu_r)$ then $\delta_\nu$ denotes the class of $(f-t)^{-\nu}$ (recall we use multi-index notation). We denote $\delta := \delta_{(1,1, \ds, 1)}$. We equip $D_{R[t]}$ with the $V$-filtration induced by the ideal $(t) = (t_1, \ds, t_r)$. 
	
Given $e > 0$ we define
$$N^e_\fa := \frac{V^0D^e_{R[t]} \cdot \delta}{V^1D^e_{R[t]} \cdot \delta}.$$
Note that the construction of this module depends on the original choice of generators $(f_1, \ds, f_r)$ for $\fa$.

\begin{remark}
	In characteristic zero one considers the module $V^0 D_{R[t]} \cdot \delta / V^1  D_{R[t]} \cdot \delta$ \cite{BMSa}. Observe that in characteristic $p > 0$ we have maps $N^e_\fa \to N^{e+1}_\fa$ and that $\varinjlim_{e} N^e_\fa = V^0 D_{R[t]} \cdot \delta / V^1 D_{R[t]} \cdot \delta$. We thus think of the $N^e_\fa$ as providing a filtration of the module that comes from characteristic zero. We will explore this limiting process further in Sections \ref{scn-Ap-and-modules} and \ref{scn-A-module-N}. 
\end{remark}

As mentioned, a key ingredient in the construction will be to consider the action of certain operators $s_{p^i}$ on the modules $N^e_\fa$. We will next define these operators and describe their basic properties.

\pagebreak

\subsection{The operators $s_m$}

Let $R$ be an $A$-algebra and consider a polynomial ring $R[t] := R[t_1, \ds, t_r]$ over $R$. Given a multi-exponent $\ul{a} \in \N_0^r$ we denote by $\partial_t^{[\ul{a}]}$ the ``divided power differential operator": $\partial_{t}^{[\ul{a}]}$ is the unique $R$-linear operator that acts on a monomial $t^{\ul{k}}$ by $\partial_t^{[\ul{{a}}]} \cdot t^{\ul{k}} = {\ul{k} \choose \ul{a}} t^{\ul{k} - \ul{a}}$. From the definition one easily checks that $\partial_t^{[\ul{a}]} \partial_t^{[\ul{b}]} = {\ul{a} + \ul{b} \choose \ul{a}} \partial_t^{[\ul{a} + \ul{b}]}$. If $A$ is a field of characteristic zero one has $\partial_t^{[\ul{a}]} = \frac{1}{\ul{a}!} \partial_t^{\ul{a}}$.

For every positive integer $m \in \N$ we define the $A$-linear differential operator 
$$s_m := (-1)^m \sum_{\substack{\ul{a} \in \N_0^r \\ |\ul{a}| = m}} \partial_t^{[\ul{a}]} t^{\ul{a}}.$$
The operators $s_m$ also depend on $R$ and $r$ but we suppress these from the notation as they will always be clear from context. Even though we defined them for all $m>0$, in a characteristic-$p$ setup we will only be interested in operators $s_m$ where $m = p^i$ for some $i$.

Given an integer $n \geq 0$ we denote by $n_i$ the $i$-th digit in the base-$p$ expansion of $n$. That is to say, the $n_i$ are integers with $0 \leq n_i < p$ and such that $n = n_0 + pn_1 + p^2 n_2 + \cds$.

\begin{proposition} \label{prop-s_m-properties} \label{spi-combined-prop}
	The operators $s_m$ on $R[t_1, \ds, t_r]$ satisfy the following properties:
	\begin{enumerate}[(a)]
		\item They are pairwise commuting, i.e. $s_m s_l = s_l s_m$ for all $m, l \in \N$. 
		\item We have $m! s_m = s_1 (s_1 - 1) \cds (s_1 - m + 1)$. 
		\item If $\ul{a} \in \N_0^r$ is a multi-exponent then for all $m > 0$ we have
		$$s_m \cdot t^{ \ul{a}} = (-1)^m {|\ul{a}| + r + m - 1\choose m } t^{\ul{a}}.$$
	\end{enumerate}
If $R$ has characteristic $p$ then, moreover,
\begin{enumerate}[resume*]
	\item Fix an integer $e > 0$ and a multi-exponent $\ul{a} \in \N_0^r$.  Then for all $i < e$ we have 
		$$s_{p^i} \cdot t^{(p^e -1 )\mathbbm{1} - \ul{a}} =|\ul{a}|_i \ t^{\ul{a}}.$$
	\item Given integers $0 \leq i < e$, the operator $s_{p^i}$ is in $D^e_{R[t]}$. 
	\item The operators $\{s_{p^i} : i = 0 , 1, \ds \}$ satisfy $s_{p^i}^p = s_{p^i}$ or, equivalently, $\prod_{j \in \F_p}(s_{p^i} - j) = 0$.
	\item Given $e > 0$ and a module $M$ over $\F_p[s_{p^0}, s_{p^1}, \ds, s_{p^{e-1}}]$ the module $M$ splits as a direct sum
	\begin{equation*} 
		M = \bigoplus_{\alpha \in \F_p^e} M_\alpha \label{multi-eigen-dec-eqn}
	\end{equation*}
	where for all $\alpha = (\alpha_0, \ds, \alpha_{e-1}) \in F_p^e$ and all $0 \leq i < e$ the operator $s_{p^i}$ acts on $M_\alpha$ by the scalar $\alpha_i$.
\end{enumerate}
\end{proposition}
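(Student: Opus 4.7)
The proposition is really a package of results that all flow from a single explicit eigenvalue computation for $s_m$ on a monomial, so my plan is to prove (c) first and derive most of the rest from it. Applying the definition directly via $\partial_t^{[\ul{b}]}(t^{\ul{k}}) = \binom{\ul{k}}{\ul{b}} t^{\ul{k}-\ul{b}}$, each summand $\partial_t^{[\ul{b}]} t^{\ul{b}}$ sends $t^{\ul{a}}$ to $\binom{\ul{a}+\ul{b}}{\ul{b}} t^{\ul{a}}$, so the eigenvalue of $s_m$ on $t^{\ul{a}}$ is $(-1)^m \sum_{|\ul{b}|=m}\binom{\ul{a}+\ul{b}}{\ul{b}}$. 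The key combinatorial identity $\sum_{|\ul{b}|=m}\binom{\ul{a}+\ul{b}}{\ul{b}} = \binom{|\ul{a}|+r+m-1}{m}$ is an $r$-fold Vandermonde identity, cleanly verified by comparing coefficients of $u^m$ in $\prod_j(1-u)^{-(a_j+1)} = (1-u)^{-|\ul{a}|-r}$. Part (a) is then immediate since each $s_m$ acts as a scalar on every monomial, and part (b) follows from the identity $\prod_{k=0}^{m-1}((|\ul{a}|+r)+k) = m!\binom{|\ul{a}|+r+m-1}{m}$ after evaluating both sides on $t^{\ul{a}}$.

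For (d), I substitute $\ul{a} \mapsto (p^e-1)\mathbbm{1} - \ul{a}$ in (c) to get the eigenvalue $(-1)^{p^i}\binom{rp^e - |\ul{a}| + p^i - 1}{p^i}$, and reduce modulo $p$. By Lucas's theorem this is $(-1)^{p^i} n_i \bmod p$, where $n_i$ denotes the $i$-th base-$p$ digit of $n := rp^e - |\ul{a}| + p^i - 1$. Since $i < e$, the term $rp^e$ vanishes modulo $p^{i+1}$, so computing $n_i$ amounts to finding the $i$-th digit of $p^i - 1 - |\ul{a}| \bmod p^{i+1}$. A short case split (if $|\ul{a}|_i = 0$ then $n_i = 0$; if $|\ul{a}|_i \neq 0$ then after a borrow $n_i = p - |\ul{a}|_i$) together with a sign check (immediate for $p$ odd, with slight attention for $p=2$, $i = 0$) gives $(-1)^{p^i} n_i \equiv |\ul{a}|_i \pmod{p}$. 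This base-$p$ digit bookkeeping is the main obstacle in the proposition, though it is entirely mechanical once the cases are laid out.

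For (e), $s_{p^i}$ is manifestly $R$-linear, so it suffices to check that it commutes with multiplication by each $t_j^{p^e}$; a short divided-power Leibniz computation, using that $\binom{p^e}{d} \equiv 0 \pmod p$ for $0 < d < p^e$, shows $\partial_t^{[\ul{b}]}(t_j^{p^e} g) = t_j^{p^e} \partial_t^{[\ul{b}]}(g)$ whenever $|\ul{b}| < p^e$, from which the claim follows. Part (f) then holds because $s_{p^i} \in D^e_{R[t]}$ is determined by its action on an $R^{p^e}[t^{p^e}]$-generating set of $R[t]$, and (d) exhibits $s_{p^i}$ as diagonal with eigenvalues in $\F_p$ on the monomials $t^{(p^e-1)\mathbbm{1} - \ul{a}}$, so $\prod_{j \in \F_p}(s_{p^i} - j) = 0$. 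Finally, (g) is a standard Chinese Remainder argument using (a) and (f): the pairwise commuting operators $s_{p^i}$, each satisfying $\prod_{j \in \F_p}(s_{p^i} - j) = 0$, generate a subalgebra isomorphic to $\prod_{\alpha \in \F_p^e} \F_p$ with explicit orthogonal idempotents $e_\alpha := \prod_{i=0}^{e-1} \prod_{j \in \F_p,\ j \neq \alpha_i} (s_{p^i} - j)/(\alpha_i - j)$ summing to $1$, and the decomposition $M = \bigoplus_\alpha e_\alpha M$ is the desired eigenspace splitting.
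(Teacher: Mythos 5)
Your proof is correct, and it takes a genuinely different route from the paper's in two places. For parts (a)--(c), the paper establishes the operator recursion $m\,s_m = s_{m-1}(s_1 - m + 1)$ first by a direct divided-power computation, which gives (b), and only then derives the eigenvalue formula (c) from (b) and the value of $s_1$. You reverse this: you compute the eigenvalue of $s_m$ on $t^{\ul{a}}$ directly as $(-1)^m\sum_{|\ul{b}|=m}\binom{\ul{a}+\ul{b}}{\ul{b}}$ and close the sum with the $r$-fold Vandermonde identity via the generating function $(1-u)^{-|\ul{a}|-r}$, getting (c) outright, and then read (a) and (b) off as scalar identities. Your route avoids the recursion entirely and is arguably cleaner; the paper's recursion does have independent value as an operator identity but is not reused elsewhere. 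For part (e), the paper deduces $s_{p^i}\in D^e_{R[t]}$ indirectly, by observing (from the Lucas reduction of the eigenvalue in (c)) that $s_{p^i}$ acts by the same scalar on $t^{\ul{a}}$ and on $t_j^{p^e}t^{\ul{a}}$; you instead give a self-contained divided-power Leibniz argument using $\binom{p^e}{d}\equiv 0\pmod p$ for $0<d<p^e$, showing directly that $\partial_t^{[\ul{b}]}$ commutes with multiplication by $t_j^{p^e}$ when the relevant exponents are below $p^e$. Both are valid; yours doesn't depend on the eigenvalue bookkeeping. Your treatments of (d), (f), and (g) match the paper's in substance -- the paper also uses Lucas for (d), and simply asserts that (g) follows from (f), whereas you spell out the orthogonal idempotents, which is the standard argument the paper has in mind.
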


\begin{proof}
	We first notice that if $\ul{a} \in \N_0^r$ is a multi-exponent then it is clear from the definition that $s_m$ acts on the monomial $t^{\ul{a}}$ by an integer scalar (in fact, (c) will say what scalar it is). Statement (a) follows.
	
	We claim that $m s_m = s_{m - 1} (s_1 - m + 1)$, which will prove (b). One first checks that 
	$$\partial_{t_i}^{[a]} t_i^a \partial_i t_i = (a+1) \partial_{t_i}^{[a + 1]} t_i^{a+1} - a \partial_{t_i}^{[a]} t^a,$$
	and the claim then follows from the following computation:	
	\begin{align*}
	(-1)^m s_{m -1} s_1  & = \bigg( \sum_{|\ul{a}| = m - 1} \partial_t^{[\ul{a}]} t^{\ul{a}} \bigg) \bigg( \sum_{i = 1}^r \partial_{t_i} t_i \bigg)\\
		& = \sum_{|\ul{a}| = m -1} \sum_{i=1}^r \partial_{t_1}^{[a_1]} t_1^{a_1} \cds \bigg((a_i + 1) \partial_{t_i}^{[a_i + 1]} t^{a_i + 1} - a_i \partial_{t_i}^{[a_i]}t^{a_i} \bigg) \cds \partial_{t_r}^{[a_r]} t_r^{[a_r]} \\
		& = \sum_{|\ul{a}| = m -1} \sum_{i=1}^r (a_i + 1) \partial_{t_1}^{[a_1]} t_1^{a_1} \cds \partial_{t_i}^{[a_i + 1]} t^{a_i + 1}  \cds \partial_{t_r}^{[a_r]} t_r^{[a_r]}  \hspace*{10pt} + \\   
		& \hspace*{75pt} +  \sum_{|\ul{a}| = m -1} \sum_{i=1}^r  (-a_i) \partial_{t_1}^{[a_1]} t_1^{a_1} \cds  \partial_{t_r}^{[a_r]} t_r^{[a_r]} \\
		& = m (-1)^m s_m - (m-1) (-1)^{m-1} s_{m - 1}.
	\end{align*}
	We now turn to (c). The operator $s_m$ on $R[t_1, \ds, t_r]$ is, by definition, the extension of the operator $s_m$ on $\Z[t_1, \ds, t_r]$. We therefore restrict to the case $A = R = \Z$. On the monomial $t^{\ul{a}}$ the operator $s_1$ acts by the scalar $- |\ul{a}| - r$ and, from part (b), we conclude that $s_m$ acts by the scalar
	$$\frac{1}{m!} (- |\ul{a}| - r) (- |\ul{a}| - r - 1) \cds (- |\ul{a}| - r - m + 1) = (-1)^m {|\ul{a}| + r + m - 1 \choose m }$$
	as required.
	
	Suppose now that $R$ has characteristic $p>0$, and let us prove (d) From part (c) it follows that $s_{p^i}$ acts on $t^{(p^e - 1) \mathbbm{1} - \ul{a}}$ by the scalar:
	$$- {|(p^e - 1)\mathbbm{1}| - |\ul{a}| + r + p^i - 1 \choose p^i} = - {r p^e - |\ul{a}| + p^i - 1 \choose p^i}.$$
	From Lucas' theorem (Lemma \ref{lucas-thm-lemma} below) it follows that ${m \choose p^i } \equiv m_i \mod p$, where $m_i$ is the $i$-th digit in the base-$p$ expansion of $m$. We conclude that the scalar above is equal to
	$$- (rp^e - |\ul{a}| + p^i - 1)_i = -(rp^e - 1- |\ul{a}|)_i - 1 = - (p - 1- |\ul{a}|_i) - 1 = |\ul{a}|_i$$
	as required. From part (d) we deduce that if $\ul{a} \in \N_0^r$ is a multi-exponent then, for all $i < e$, the operator $s_{p^i}$ acts on monomials $t^{\ul{a}}$ and $t_i^{p^e} t^{\ul{a}}$ by the same scalar. Part (e) follows. Part (f) follows from the fact that the operators $s_{p^i}$ act on monomials $t^{\ul{a}}$ by $\F_p$-scalars. Part (g) follows from (f).
\end{proof}
\begin{lemma}[Lucas' theorem] \label{lucas-thm-lemma}
	Let $m, n \geq 0$ be integers. Then
	$${m \choose n} \equiv \prod_{j = 0}^\infty {m_j \choose n_j} \mod p.$$
\end{lemma}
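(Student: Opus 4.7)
The plan is to use the classical generating-function argument based on the Frobenius-type identity $(1+x)^p \equiv 1 + x^p \pmod{p}$ in the polynomial ring $\F_p[x]$. This congruence follows immediately from the binomial expansion together with the fact that $p \mid \binom{p}{k}$ for $1 \leq k \leq p-1$.

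First I would iterate this identity: by induction on $j$, we obtain $(1+x)^{p^j} \equiv 1 + x^{p^j} \pmod{p}$ for every $j \geq 0$. Next, writing $m = m_0 + m_1 p + m_2 p^2 + \cds$ for the base-$p$ expansion of $m$, I would compute
$$(1+x)^m = \prod_{j \geq 0} (1+x)^{m_j p^j} \equiv \prod_{j \geq 0} \bigl(1 + x^{p^j}\bigr)^{m_j} \pmod{p},$$
where the product is finite since $m_j = 0$ for $j \gg 0$.

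To finish, I would compare coefficients of $x^n$ on both sides. On the left-hand side the coefficient is $\binom{m}{n}$. On the right-hand side, expanding each factor $(1 + x^{p^j})^{m_j} = \sum_k \binom{m_j}{k} x^{k p^j}$ and collecting, the coefficient of $x^n$ is
$$\sum \prod_{j \geq 0} \binom{m_j}{k_j},$$
where the sum runs over tuples $(k_j)$ with $0 \leq k_j$ and $\sum_j k_j p^j = n$. The key point is that if any $k_j \geq p$ then $\binom{m_j}{k_j}$ is either zero (when $k_j > m_j$, which occurs once $k_j \geq p > m_j$) or the tuple simply fails to record a valid base-$p$ digit; however, the surviving contributions are those with $0 \leq k_j < p$, and for these the condition $\sum k_j p^j = n$ forces $k_j = n_j$ by uniqueness of the base-$p$ expansion. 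Thus only the term $\prod_j \binom{m_j}{n_j}$ survives, proving the congruence.

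There is no real obstacle here; the only subtle point is justifying that tuples with some $k_j \geq p$ do not contribute, which one handles by noting that $m_j < p$ makes $\binom{m_j}{k_j} = 0$ whenever $k_j \geq p > m_j$, so such terms vanish modulo $p$ (in fact vanish as integers).
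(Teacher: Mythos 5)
Your proof is correct and follows the same generating-function route as the paper: expand $(1+x)^m$ using the Frobenius identity $(1+x)^{p^j} \equiv 1+x^{p^j} \pmod p$, rewrite it as $\prod_j (1+x^{p^j})^{m_j}$, and equate the coefficient of $x^n$. The paper states this in one sentence; you have merely filled in the routine details, including the correct observation that $m_j < p$ kills every tuple $(k_j)$ other than $(n_j)$.
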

Observe that since for $j$ large enough one has $m_j = n_j = 0$ and ${0 \choose 0} = 1$ by convention the right-hand side of this congruence has a well-defined value.
\begin{proof}
	One equates the coefficient of $x^n$ on the two sides of the following equality in $\F_p[x]$: $(1+x)^m = \prod_{i = 0}^\infty(1 + x^{p^i})^m$. 
\end{proof}

We refer to the decomposition in Proposition \ref{spi-combined-prop} (g) as the multi-eigenspace decomposition of $M$ with respect to $s_{p^0}, \ds, s_{p^{e-1}}$. The following example of this property will be crucial.
\begin{example} \label{crucialexample2}
	Let $e> 0$ and let us consider the action of $s_{p^0}, \ds, s_{p^{e-1}}$ on $T := R[t] = R[t_1, \ds, t_r]$. For all $\alpha = (\alpha_0, \ds, \alpha_{e - 1}) \in \F_p^e$ the module $T_\alpha$ is generated over $R[t_1^{p^e}, \ds, t_r^{p^e}]$ by the monomials
	$$\{t^{(p^e - 1)\mathbbm{1} - \ul{a}} \ | \ \ul{a} \in \{0, \ds, p^e -1\}^r \text{ and } |\ul{a}| \equiv |\alpha| \text{ mod }  p^e \},$$
	where $|\alpha| = \alpha_0 + p \alpha_1 + \cds + p^{e-1} \alpha_{e-1}$ after identifying each $\alpha_i \in \F_p$ with its unique representative in $\{0,1, \ds, p-1\}$. 
\end{example}

Let us note that the local cohomology module $H^r_J R[t]$ is a $D_{R[t]}$-module (for example, via the \v{C}ech complex) and that $N^e_\fa$ is a module over the ring $V^0 D^e_{R[t]}$. It follows from Proposition \ref{spi-combined-prop} (e), and the observation that the $s_{p^i}$ have degree zero in the $t$'s, that the operators $s_{p^0}, s_{p^1}, \ds, s_{p^{e-1}}$ act on the module $N^e_\fa$. By Proposition \ref{spi-combined-prop} (g) $N^e_\fa$ has an multi-eigenspace decomposition with respect to the operators $s_{p_0}, \ds, s_{p^{e-1}}$. Our next goal will be to describe this decomposition, which achieved in Theorem \ref{thm-summands-of-Ne}.
\subsection{The module $H^r_J R[t]$ in positive characteristic}

In the construction of our module $N_\fa$ at the beginning of Section \ref{scn-multi-eigen-dec} our first step was to consider the module $H^r_J(R[t])$, which we construct via the \v{C}ech complex on the generators $(f_1 - t_1), \ds, (f_r - t_r)$. As above, given $\nu \in \N^r$ we let $\delta_\nu \in H^r_J R[t]$ be the class of $(f_1 - t_1)^{-\nu_1} \cds (f_r - t_r)^{-\nu_r}$. It follows that $H^r_J R[t]$ is a free $R$-module on the $\delta_\nu$. We let $\delta := \delta_{(1,1, \ds, 1)}$. 

The module $H^r_J R[t]$ has a very useful extra structure in positive characteristic -- that of a unit $F$-module over $R[t]$. Let us quickly recall the definition.

We denote by $R[t]^{(e)}$ the $(R[t], R[t])$-bimodule which is $R[t]$ on the left and $F^e_* R[t]$ on the right.

\begin{definition}
	A unit $F$-module over $R[t]$ is an $R[t]$-module $M$ together with an isomorphism $\nu_1: R[t]^{(1)} \otimes_{R[t]} M \xrightarrow{\sim} M$.
\end{definition}

The data of the map $\nu_1$ is equivalent to that of a an additive map $F: M \to M$ satisfying $F(g u) = g^p F(u)$ for all $g \in R[t]$ and $u \in M$, where the equivalence is given by $\nu_1(g \otimes u) = gF(u)$, and $F(u) = \nu_1(1 \otimes u)$. If $M$ is a unit $F$-module over $R[t]$ we can iterate the structure map $\nu_1$ to obtain an isomorphism $\nu_e: R[t]^{(e)} \otimes_{R[t]} M \xrightarrow{\sim} M$. A unit $F$-module over $R[t]$ has a natural $D_{R[t]}$-module structure, where an operator $\xi \in D^e_{R[t]}$ acts naturally on the left side of the tensor $R[t]^{(e)} \otimes M$. One can therefore ask what the decomposition from Proposition \ref{spi-combined-prop} (g) looks like. The answer is as follows.

\begin{lemma} \label{unit-F-mod-dec-lemma}
	Let $M$ be a unit $F$-module over $R[t]$. Fix $e > 0$ and let $\alpha = (\alpha_0, \ds, \alpha_{e-1}) \in \F_p^e$. Then 
	$$M_\alpha = \nu_e(T_\alpha \otimes M)$$
	where $T_\alpha$ is as in Example \ref{crucialexample2}.
\end{lemma}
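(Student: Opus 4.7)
The plan is to transfer the multi-eigenspace decomposition of $T = R[t]$ from Example \ref{crucialexample2} across the structure isomorphism $\nu_e$.

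First I would observe that the subspaces $T_\alpha \sq T$ are right $R[t]$-submodules of $R[t]^{(e)}$. Indeed, the right $R[t]$-action on $R[t]^{(e)}$ is via $F^e$, so it factors through the subring $R[t]^{p^e} \sq R[t_1^{p^e}, \ds, t_r^{p^e}]$; but Example \ref{crucialexample2} describes each $T_\alpha$ as an $R[t_1^{p^e}, \ds, t_r^{p^e}]$-submodule of $T$. Thus $R[t]^{(e)} = \bigoplus_{\alpha \in \F_p^e} T_\alpha$ is a decomposition of right $R[t]$-modules, and tensoring over $R[t]$ on the right with $M$ yields
$$R[t]^{(e)} \otimes_{R[t]} M \ = \ \bigoplus_{\alpha \in \F_p^e} T_\alpha \otimes_{R[t]} M.$$
Applying the isomorphism $\nu_e$ gives the direct sum decomposition $M = \bigoplus_\alpha \nu_e(T_\alpha \otimes_{R[t]} M)$.

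Next I would check that $s_{p^i}$ acts on each $\nu_e(T_\alpha \otimes M)$ by the scalar $\alpha_i$. By Proposition \ref{spi-combined-prop}(e) the operator $s_{p^i}$ lies in $D^e_{R[t]}$ for $i < e$, and by construction of the $D_{R[t]}$-module structure on a unit $F$-module this operator acts on $M$ via its action on the left factor of $R[t]^{(e)} \otimes_{R[t]} M$: explicitly $s_{p^i} \cdot \nu_e(g \otimes u) = \nu_e(s_{p^i} g \otimes u)$. For $g \in T_\alpha$ we have $s_{p^i} g = \alpha_i g$, so $s_{p^i}$ acts on $\nu_e(T_\alpha \otimes M)$ by the scalar $\alpha_i$. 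This gives the inclusions $\nu_e(T_\alpha \otimes_{R[t]} M) \sq M_\alpha$ for every $\alpha \in \F_p^e$.

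Finally I would conclude by combining the two direct sum decompositions $M = \bigoplus_\alpha \nu_e(T_\alpha \otimes_{R[t]} M)$ and $M = \bigoplus_\alpha M_\alpha$ (the latter being Proposition \ref{spi-combined-prop}(g)): since the inclusions between corresponding summands sum to an equality of direct sums, each term-wise inclusion is an equality, yielding $M_\alpha = \nu_e(T_\alpha \otimes_{R[t]} M)$. There is no serious obstacle here; the only substantive point requiring care is the closure of $T_\alpha$ under the Frobenius-twisted right action of $R[t]$, which is immediate from the explicit generators listed in Example \ref{crucialexample2}.
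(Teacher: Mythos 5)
Your proof is correct and follows essentially the same route as the paper's: decompose $R[t]^{(e)} = \bigoplus_\alpha T_\alpha$, transport across $\nu_e$, and observe that $s_{p^i}$ acts through the left tensor factor by the scalar $\alpha_i$. The paper's proof is terser and leaves implicit the point you rightly flag as the only substantive check — that each $T_\alpha$ is closed under the Frobenius-twisted right $R[t]$-action so that the tensor product distributes over the direct sum — so your writeup is a faithful unpacking of the same argument rather than a different one.
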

\begin{proof}
From Example \ref{crucialexample2} we have that the multi-eigenspace decomposition for $R[t]^{(e)}$ is $R[t]^{(e)} = \bigoplus_\alpha T_\alpha$. We thus have $M = \nu_e(R^{(e)} \otimes M) = \bigoplus_\alpha \nu_e(T_\alpha \otimes M)$ and, as the operators $s_{p^i}$ act by $s_{p^i} \cdot \nu_e(g \otimes u) = \nu_e((s_{p^i} \cdot g) \otimes u)$, this gives the multi-eigenspace decomposition of $M$. 
\end{proof}
The module $R[t]$ is itself a unit $F$-module, as well as any of its localizations. The cokernel of a map of unit $F$-modules is also a unit $F$-module. By considering the \v{C}ech complex it follows that $H^r_J R[t]$ is a unit $F$-module. The unit $F$-module structure on $H^r_J R[t]$ is given by $\nu_1(g \otimes \delta_\nu) = g \delta_{p \nu}$.

Given an $e>0$ and an $r$-tuple $\ul{a} \in\{0, \ds, p^e -1 \}^r$, set
$$Q^e_{\ul{a}} := \nu_e(t^{(p^e - 1) \mathbbm{1}- \ul{a}} \otimes \delta ).$$
By Lemma \ref{unit-F-mod-dec-lemma}, $Q^e_{\ul{a}}$ is an eigenvector for the multi-eigenvalue $\alpha \in \F_p^e$ where $\alpha_i = |\ul{a}|_i$.

Our first goal will be to show the following.

\begin{proposition} \label{Q-lin-indep}
	Fix $e > 0$. Then the elements $\{Q^e_{\ul{a}} : 0 \leq a_j < p^e\}$ are linearly independent over $R$.
\end{proposition}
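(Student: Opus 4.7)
The plan is to express each $Q^e_{\ul{a}}$ as an explicit $R$-linear combination of the standard basis $\{\delta_\nu : \nu \in \N_{\geq 1}^r\}$ of $H^r_J R[t]$ and then extract linear independence from a triangularity observation.

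First I would unwind the definition. Since $J = (f_1 - t_1, \ldots, f_r - t_r)$ is generated by a regular sequence (after the change of variable $u_i = f_i - t_i$ we have $R[t] = R[u]$ and $J = (u_1, \ldots, u_r)$), the local cohomology module $H^r_J R[t]$ is $R$-free on $\{\delta_\nu : \nu \geq \mathbbm{1}\}$. The unit $F$-module structure gives $\nu_e(g \otimes \delta) = g\,\delta_{p^e \mathbbm{1}}$, so $Q^e_{\ul{a}} = t^{(p^e - 1)\mathbbm{1} - \ul{a}}\, \delta_{p^e \mathbbm{1}}$. Using $t_i = f_i - (f_i - t_i)$ in $H^r_J R[t]$ one gets the identity $t_i \cdot \delta_\nu = f_i \delta_\nu - \delta_{\nu - e_i}$ (with the convention $\delta_{\nu - e_i} = 0$ if $\nu_i = 1$), and iterating this coordinate by coordinate yields the expansion
\[
Q^e_{\ul{a}} = \sum_{\ul{j} \leq (p^e - 1)\mathbbm{1} - \ul{a}} (-1)^{|\ul{j}|} \binom{(p^e - 1)\mathbbm{1} - \ul{a}}{\ul{j}} f^{(p^e - 1)\mathbbm{1} - \ul{a} - \ul{j}}\, \delta_{p^e \mathbbm{1} - \ul{j}}.
\]

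The key observation is a triangularity statement: the basis vector $\delta_{\mathbbm{1} + \ul{a}}$ occurs in $Q^e_{\ul{a}'}$ if and only if $\ul{a}' \leq \ul{a}$ componentwise, and when $\ul{a}' = \ul{a}$ its coefficient equals $(-1)^{|(p^e - 1)\mathbbm{1} - \ul{a}|}$, a unit in $R$. Indeed, to obtain $\delta_{\mathbbm{1} + \ul{a}}$ from the expansion one must take $\ul{j} = (p^e - 1)\mathbbm{1} - \ul{a}$, and this $\ul{j}$ satisfies the summation constraint $\ul{j} \leq (p^e - 1)\mathbbm{1} - \ul{a}'$ precisely when $\ul{a}' \leq \ul{a}$. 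Granted this, suppose $\sum_{\ul{a}} r_{\ul{a}} Q^e_{\ul{a}} = 0$; reading off the coefficient of $\delta_{\mathbbm{1} + \ul{a}}$ expresses $r_{\ul{a}}$ as an $R$-linear combination of $\{r_{\ul{a}'} : \ul{a}' < \ul{a}\}$, and a straightforward induction on $\ul{a}$ in the componentwise partial order (base case $\ul{a} = 0$ giving $r_0 = 0$) forces every $r_{\ul{a}} = 0$.

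The argument is essentially bookkeeping once the expansion is in hand; the only conceptual point is choosing to track the ``bottom'' basis element $\delta_{\mathbbm{1} + \ul{a}}$ rather than the natural ``top'' element $\delta_{p^e \mathbbm{1}}$, since the former isolates each $\ul{a}$ triangularly while the latter does not. The main obstacle lies in the careful multi-index manipulations needed to verify the expansion and the triangularity; beyond those, the induction is routine and requires no characteristic-$p$ input.
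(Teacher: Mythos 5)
Your proof is correct and follows essentially the same route as the paper: you derive the same expansion of $Q^e_{\ul{a}}$ in the basis $\{\delta_\nu\}$ (the paper's Lemma \ref{Q-decomp}) and then exploit the triangularity with unit diagonal entries. The only cosmetic difference is that you induct directly on the componentwise partial order via coefficient extraction, while the paper (Lemma \ref{q-linindep-lemma}) packages the same triangularity as a filtration argument by total degree $|\nu|$; both are valid and equivalent in content.
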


Before we begin, we remark that just as in \cite{Mustata2009} it is true that for a fixed $e > 0$ the elements $\nu_e(t^{(p^e - 1)\mathbbm{1} - \ul{a}} \otimes \delta_\nu)$ as $\nu$ ranges through $\{0, \ds, p^e-1\}^r$ form an $R$-basis for the module $H^r_J R[t]$. However we will only require the result above.

We first want to express the elements $Q^e_{\ul{a}}$ in terms of the $R$-basis $\delta_\nu$.

\begin{lemma} \label{Q-decomp}
	Let $e$ and $\ul{a}$ be as above. Let $E_{\ul{a}}$ be the set of $r$-tuples of integers $(i_1, \ds, i_r)$ with $0 \leq i_j \leq p^e - 1- a_j$ for all $j$. We then have
	$$Q^e_{\ul{a}} = \sum_{\ul{i} \in E_{\ul{a}}} u_{(e,\ul{a}, \ul{i})} f^{\ul{i}} \delta_{\mathbbm{1} + \ul{a} + \ul{i}},$$
	where
	$$u_{(e,\ul{a}, \ul{i})} := (-1)^{|\ul{a}| + |\ul{i}|} {(p^e - 1) \mathbbm{1} - \ul{a} \choose \ul{i}}.$$
\end{lemma}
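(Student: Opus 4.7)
The plan is to unwind the definition of $Q^e_{\ul{a}}$ using the explicit description of the unit $F$-module structure on $H^r_J R[t]$ and then expand $t_j = f_j - (f_j - t_j)$ via the binomial theorem, using that $(f_j - t_j)$ lowers the index of the cohomology class $\delta_{p^e \mathbbm{1}}$ by one in the $j$-th coordinate.

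More precisely, the paper has established that the structure map is $\nu_1(g \otimes \delta_\nu) = g \delta_{p \nu}$, so iterating gives $\nu_e(g \otimes \delta_\nu) = g \delta_{p^e \nu}$. Applied with $g = t^{(p^e - 1)\mathbbm{1} - \ul{a}}$ and $\nu = \mathbbm{1}$, we obtain
\[
Q^e_{\ul{a}} \;=\; t^{(p^e - 1)\mathbbm{1} - \ul{a}} \, \delta_{p^e \mathbbm{1}}.
\]
Next I would write $t_j = f_j - (f_j - t_j)$ and apply the one-variable binomial expansion to each factor, then multiply the $r$ results together with multi-index notation. This gives
\[
t^{(p^e-1)\mathbbm{1} - \ul{a}} \;=\; \sum_{\ul{k} \in E_{\ul{a}}} (-1)^{|\ul{k}|} \binom{(p^e-1)\mathbbm{1} - \ul{a}}{\ul{k}} f^{(p^e-1)\mathbbm{1} - \ul{a} - \ul{k}} \, (f - t)^{\ul{k}},
\]
where the range $0 \le k_j \le p^e - 1 - a_j$ is exactly the definition of $E_{\ul{a}}$.

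Now I multiply this expansion by $\delta_{p^e \mathbbm{1}}$. Using the \v{C}ech realization, $(f_j - t_j)^{k_j} \cdot \delta_{p^e \mathbbm{1}} = \delta_{p^e \mathbbm{1} - k_j e_j}$ for $0 \le k_j \le p^e - 1$, and in our range this never vanishes. Hence
\[
Q^e_{\ul{a}} \;=\; \sum_{\ul{k} \in E_{\ul{a}}} (-1)^{|\ul{k}|} \binom{(p^e-1)\mathbbm{1} - \ul{a}}{\ul{k}} f^{(p^e-1)\mathbbm{1} - \ul{a} - \ul{k}} \, \delta_{p^e \mathbbm{1} - \ul{k}}.
\]
Finally, I perform the change of index $\ul{i} := (p^e - 1)\mathbbm{1} - \ul{a} - \ul{k}$, which is a bijection of $E_{\ul{a}}$ with itself and transforms $p^e \mathbbm{1} - \ul{k}$ into $\mathbbm{1} + \ul{a} + \ul{i}$ and the binomial coefficient into $\binom{(p^e - 1)\mathbbm{1} - \ul{a}}{\ul{i}}$, matching the asserted formula.

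The only subtle point is the sign: the substitution produces $(-1)^{r(p^e-1) - |\ul{a}| - |\ul{i}|}$, and one must check this equals $(-1)^{|\ul{a}| + |\ul{i}|}$ in $R$. For odd $p$, $p^e - 1$ is even so $(-1)^{r(p^e-1)} = 1$; for $p = 2$ we are in characteristic $2$ and signs are trivial. This is the only real bookkeeping; no genuine obstacle is expected, since the statement is essentially a direct computation given the explicit unit $F$-module structure on $H^r_J R[t]$.
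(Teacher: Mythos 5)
Your proof is correct and takes essentially the same route as the paper: expand $t_j = f_j - (f_j - t_j)$ by the multivariate binomial theorem and use the unit $F$-module structure to identify $\nu_e((f-t)^{\ul b} \otimes \delta)$ with $\delta_{p^e\mathbbm 1 - \ul b}$ (equivalently, $(f-t)^{\ul k}\delta_{p^e\mathbbm 1}=\delta_{p^e\mathbbm 1 - \ul k}$). The only cosmetic difference is that the paper indexes the binomial sum so that the exponent on $f$ is the summation variable $\ul i$ directly, avoiding your final change of index, and it silently absorbs the sign discrepancy $(-1)^{r(p^e-1)}=1$ in characteristic $p$, which you spell out correctly.
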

\begin{proof}
	We have
	\begin{align*}
	Q^e_{\ul{a}} & = \nu_e \bigg( (f - (f-t))^{(p^e - 1)\mathbbm{1}- \ul{a}} \otimes \delta \bigg) \\
	& = \sum_{\ul{i} \in E_{\ul{a}}} (-1)^{|\ul{a}| + |\ul{i}|} {(p^e - 1)\mathbbm{1}- \ul{a} \choose \ul{i}} f^{\ul{i}} \nu_e\bigg((f-t)^{(p^e - 1)\mathbbm{1}- \ul{a} - \ul{i}} \otimes \delta\bigg), \\	
	\end{align*}
	where the second equality follows from the multi-variate binomial theorem. The proof is complete once we observe the fact that $\nu_e((f-t)^{\ul{b}} \otimes \delta) = \delta_{p^e \mathbbm{1} - \ul{b}}$. 
\end{proof}

Given an integer $n\geq 0$ let $F_n$ be the $R$-module spanned by $\delta_{\nu}$ for which $\sum_j \nu_j > n$.

\begin{lemma} \label{q-linindep-lemma}
	Suppose $\{q_{\ul{a}}: \ul{a} \in \N^r\}$ is a collection of elements satisfying
	$$q_{\ul{a}} \equiv \delta_{\ul{a}} \ \ \ \mod F_{a_1 + \cds+ a_r}$$
	for all $\ul{a} \in \N^r$. Then the $\{q_{\ul{a}}\}$ are linearly independent over $R$.
\end{lemma}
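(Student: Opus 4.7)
The plan is to prove the statement by a standard triangularity argument, using the fact that the $\delta_\nu$ form an $R$-basis of $H^r_J R[t]$ and that the hypothesis says $q_{\ul{a}} - \delta_{\ul{a}}$ lives in a strictly ``higher-degree'' part of this basis. The key observation is that the filtration $F_n$ is decreasing in $n$ and is defined purely in terms of the total degree $|\nu|$ in the basis $\{\delta_\nu\}$.

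First I would assume for a contradiction that there is a nontrivial finite $R$-linear relation $\sum_{\ul{a} \in S} c_{\ul{a}} q_{\ul{a}} = 0$, where $S \sq \N^r$ is finite and not all $c_{\ul{a}}$ are zero. Let $n := \min\{|\ul{a}| : \ul{a} \in S, \ c_{\ul{a}} \neq 0\}$, and pick some $\ul{a}_0 \in S$ with $|\ul{a}_0| = n$ and $c_{\ul{a}_0} \neq 0$. Using the hypothesis, write $q_{\ul{a}} = \delta_{\ul{a}} + r_{\ul{a}}$ with $r_{\ul{a}} \in F_{|\ul{a}|}$, so that $r_{\ul{a}}$ is an $R$-linear combination of basis elements $\delta_\nu$ with $|\nu| > |\ul{a}|$.

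Next I would extract the coefficient of $\delta_{\ul{a}_0}$ from the relation. The term $c_{\ul{a}_0} q_{\ul{a}_0}$ contributes exactly $c_{\ul{a}_0}$ to this coefficient. For any other $\ul{a} \in S$ with $\ul{a} \neq \ul{a}_0$: if $|\ul{a}| = n$ then $q_{\ul{a}}$ contains $\delta_{\ul{a}}$ (with $\ul{a} \neq \ul{a}_0$) plus terms in $F_n$, i.e. terms involving $\delta_\nu$ only for $|\nu| > n = |\ul{a}_0|$, so there is no contribution to $\delta_{\ul{a}_0}$; if $|\ul{a}| > n$ then $q_{\ul{a}} = \delta_{\ul{a}} + r_{\ul{a}}$ with $|\ul{a}| > n$ and $r_{\ul{a}}$ supported on $\delta_\nu$ with $|\nu| > |\ul{a}| > n$, so again there is no contribution to $\delta_{\ul{a}_0}$. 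Since the $\delta_\nu$ are $R$-linearly independent, the total coefficient of $\delta_{\ul{a}_0}$ must vanish, forcing $c_{\ul{a}_0} = 0$, a contradiction.

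There is no real obstacle here: the argument is a routine ``leading term'' / triangularity computation, and the only thing to be careful about is organizing the multi-indices and making sure that for every $\ul{a} \in S$ the piece $q_{\ul{a}}$ cannot accidentally contribute to the $\delta_{\ul{a}_0}$ coordinate. Since $F_n$ is defined as the span of $\delta_\nu$ with $|\nu| > n$ (strict inequality), this separation is automatic and the proof is essentially one paragraph long.
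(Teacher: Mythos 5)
Your proof is correct and takes essentially the same approach as the paper: both exploit that the $\delta_\nu$ form an $R$-basis, pass to the minimum total degree $n$ appearing with a nonzero coefficient, and use that $q_{\ul{a}} - \delta_{\ul{a}}$ is supported in strictly higher degree. The paper reduces the whole relation modulo $F_n$ and kills all the degree-$n$ coefficients at once, while you extract the single $\delta_{\ul{a}_0}$-coordinate, but this is a cosmetic difference in the same triangularity argument.
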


\begin{proof}
	Suppose we had a relation
	$$r_1 q_{\ul{a}_1} + \cds + r_k q_{\ul{a}_k} = 0$$
	and assume for a contradiction that all $r_i \neq 0$. Let $a_{ij}$ be the entries of $\ul{a}_i$, i.e. $\ul{a}_i = (a_{i1}, \ds, a_{ir})$. Let
	$$n := \min\{a_{i1} + \cds + a_{ir}: i = 1, \ds, k\}$$
	and by relabeling assume that $n = a_{11} + \cds + a_{1r}  = \cds = a_{s1} + \cds + a_{sr}$ and $n < a_{i1} + \cds + a_{ir}$ for all $i > s$. By considering the relation modulo $F_n$ we obtain
	$$r_1 \delta_{\ul{a}_1} + \cds + r_s \delta_{\ul{a}_s} \equiv 0 \mod F_n.$$
	Recall that $H^r_J R[t]$ is a free $R$-module on the basis $\delta_{\ul{a}}$. It follows that $H^r_J R[t]/F_n$ is a free module on the basis $\{\delta_{\ul{a}}: a_1 + \cds+ a_r \leq n\}$, which implies that $r_1 = \cds = r_s = 0$, a contradiction.
\end{proof}

We can now prove Proposition \ref{Q-lin-indep}.

\begin{proof}[Proof of Proposition \ref{Q-lin-indep}]
	For all $\ul{a} \in \N^r$ let $q_{\ul{a}} := (-1)^{|\ul{a}|} Q^e_{\ul{a} - \mathbbm{1}}$. From Lemma \ref{Q-decomp} it follows that $q_{\ul{a}} \equiv \delta_{\ul{a}} \mod F_{a_1 + \cds + a_r}$ and thus by Lemma \ref{q-linindep-lemma} the $q_{a,b}$ are linearly independent. It follows that the $Q^e_{\ul{a}}$ are also linearly independent.
\end{proof}

Finally, we express $\delta$ in terms of the $Q^e_{\ul{a}}$.

\begin{lemma} \label{delta_dec}
	For all $e > 0$ we have
	$$\delta = \sum_{\ul{a} \in \{0, \ds, p^e-1\}^r} u_{\ul{a}} f^{\ul{a}} Q^e_{\ul{a}}$$
	where the $u_{\ul{a}}$ are units in $\F_p$.
\end{lemma}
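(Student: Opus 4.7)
The plan is to compute $\delta$ directly using the unit $F$-module structure on $H^r_J R[t]$. The key observation is that in the localization $R[t]_{(f_1-t_1)\cdots(f_r-t_r)}$ the trivial identity
\[
\prod_i \frac{1}{f_i - t_i} = \left(\prod_i (f_i - t_i)^{p^e - 1}\right) \cdot \prod_i \frac{1}{(f_i - t_i)^{p^e}}
\]
holds, and passing to classes in local cohomology yields
\[
\delta = \left(\prod_i (f_i - t_i)^{p^e - 1}\right) \delta_{p^e \mathbbm{1}}.
\]
On the other hand, the formula $\nu_e(g\otimes \delta_\nu) = g\,\delta_{p^e \nu}$ for the unit $F$-module structure gives $Q^e_{\ul{b}} = t^{(p^e-1)\mathbbm{1} - \ul{b}}\,\delta_{p^e \mathbbm{1}}$ directly from the definition. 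So the lemma reduces to expressing the polynomial $\prod_i (f_i-t_i)^{p^e - 1}$ as an $R$-linear combination of the monomials $\{f^{\ul{b}}\, t^{(p^e-1)\mathbbm{1} - \ul{b}} : \ul{b} \in \{0,\dots,p^e-1\}^r\}$ with unit coefficients in $\F_p$.

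I would then expand via the multivariate binomial theorem:
\[
\prod_i (f_i - t_i)^{p^e - 1} = \sum_{\ul{a} \in \{0, \dots, p^e-1\}^r} \prod_i (-1)^{a_i}\binom{p^e - 1}{a_i}\, f^{(p^e-1)\mathbbm{1} - \ul{a}} \, t^{\ul{a}},
\]
and reindex via $\ul{b} := (p^e-1)\mathbbm{1} - \ul{a}$ to obtain $\delta = \sum_{\ul{b}} u_{\ul{b}}\, f^{\ul{b}}\, Q^e_{\ul{b}}$ with $u_{\ul{b}} = \prod_i (-1)^{p^e - 1 - b_i}\binom{p^e-1}{p^e-1-b_i}$. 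Invoking Lucas' theorem (Lemma \ref{lucas-thm-lemma}) one has $\binom{p^e - 1}{k} \equiv (-1)^k \pmod p$ for $0 \le k \le p^e - 1$, and combining this with the parity observation $(-1)^{p^e - 1 - b_i} \equiv (-1)^{b_i} \pmod p$ (trivial for $p=2$, and valid for $p$ odd because $p^e - 1$ is even), it follows that $u_{\ul{b}} \equiv 1 \pmod p$ for every $\ul{b}$. In particular, every $u_{\ul{b}}$ is a unit in $\F_p$ (indeed equal to $1$).

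I do not expect any serious obstacle: the proof is pure bookkeeping once one spots the trivial localization identity, which furnishes a concrete polynomial representative for $\delta$ against the ``background'' class $\delta_{p^e\mathbbm{1}}$. The only nontrivial input is Lucas' theorem, which is already available as Lemma \ref{lucas-thm-lemma}.
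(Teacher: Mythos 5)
Your proof is correct and is essentially the same argument as in the paper. The paper starts from $\delta = \nu_e((f-t)^{(p^e-1)\mathbbm{1}} \otimes \delta)$, which is exactly your localization identity $\delta = (f-t)^{(p^e-1)\mathbbm{1}}\delta_{p^e\mathbbm{1}}$ rewritten via $\nu_e(g\otimes\delta_\nu)=g\,\delta_{p^e\nu}$; both then expand by the multivariate binomial theorem and invoke Lucas' theorem. The one small refinement in your write-up is that you pin down the units precisely (each $u_{\ul{b}}=1$ in $\F_p$, using $\binom{p^e-1}{k}\equiv(-1)^k$ and the parity observation), whereas the paper is content to note that $\binom{p^e-1}{b}=\prod_i\binom{p-1}{b_i}\neq 0$ in $\F_p$. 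This sharper value isn't needed downstream, but it's a clean observation and costs nothing.
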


\begin{proof}
	We have
	\begin{align*}
	\delta & = \nu_e((f-t)^{(p^e - 1)\mathbbm{1}} \otimes \delta) \\
	& = \nu_e \lp \sum_{\ul{a} \in \{0, \ds, p^e - 1\}^r} (-1)^{|\ul{a}|} {(p^e - 1)\mathbbm{1} \choose \ul{a}} f^{\ul{a}} t^{(p^e -1)\mathbbm{1} - \ul{a}} \otimes \delta \rp \\
	& = \sum_{\ul{a} \in \{0, \ds, p^e -1\}^r} u_{\ul{a}} f^{\ul{a}} Q^e_{\ul{a}}.
	\end{align*}
	where we have set
	$$u_{\ul{a}} = (-1)^{|\ul{a}|} {(p^e -1 )\mathbbm{1} \choose \ul{a}}.$$
	Finally, observe that given an integer $b$ with $0 \leq b < p^e$ by Lemma \ref{lucas-thm-lemma} (Lucas' theorem) we have the following equality in $\F_p$:
	$${p^e - 1 \choose b} = \prod_{i = 1}^{e-1} {p-1 \choose b_i}.$$
	It follows that ${p^e - 1 \choose b}$ is nonzero in $\F_p$, and this shows that $u_{\ul{a}}$ is nonzero in $\F_p$. This completes the proof.
\end{proof}

\subsection{Description of the modules $N^e_\fa$.}

We are finally in a position to describe the eigenspaces of $N^e_\fa$. Let us first introduce some notation: given $\alpha = (\alpha_0, \ds, \alpha_{e-1}) \in \F_p^e$ we denote by $|\alpha|$ the integer $|\alpha| := \alpha_0 + p \alpha_1 + \cds + p^{e-1} \alpha_{e-1}$, where we have identified each $\alpha_i \in \F_p$ with its unique representative in $\{0, \ds, p-1\}$. Observe that if $\alpha\in \F_p^e$ then $0 \leq |\alpha| < p^e$. 
\begin{theorem} \label{thm-summands-of-Ne}
	Let $e > 0$ be an integer and $\alpha = (\alpha_0, \ds, \alpha_{e-1}) \in \F_p^e$. 
	\begin{enumerate}[(a)]
		\item The multi-eigenspace $(N^e_\fa)_\alpha$ is then a direct sum of the modules in the set
		$$\bigg\{ \frac{D^e_R \cdot \fa^{|\alpha| + sp^e}}{D^e_R \cdot \fa^{|\alpha| + sp^e+1}} \ \bigg| \ s = 0, 1, \ds, r-1 \bigg\},$$
		and in turn each such module occurs in the direct sum.
		\item The module $N^e_\alpha$ is zero if and only if for all $s \in \{0, 1, \ds , r-1\}$ we have
		$$\cC^e_R \cdot \fa^{|\alpha| + sp^e} = \cC^e_R \cdot \fa^{|\alpha| + sp^e + 1}.$$
	\end{enumerate}
\end{theorem}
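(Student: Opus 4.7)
The plan is to rewrite $N^e_\fa$ via the $V$-filtration identities and then decompose it using the explicit expansion of $\delta$ from Lemma \ref{delta_dec}. By Lemma \ref{V-filt-lemma}(c) we have $V^1 D^e_{R[t]} = V^0 D^e_{R[t]} \cdot I$ with $I = (t_1, \ldots, t_r)$; since $(f_j - t_j)\delta = 0$ in $H^r_J R[t]$, we have $t_j \delta = f_j \delta$, and combining these yields the crucial identity $V^1 D^e_{R[t]} \cdot \delta = V^0 D^e_{R[t]} \cdot \fa \delta$. Thus $N^e_\fa = V^0 D^e_{R[t]} \cdot \delta / V^0 D^e_{R[t]} \cdot \fa \delta$, which is the bridge between the $t$-filtration description and the $\fa$-filtration description demanded by the theorem.

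Next, by Proposition \ref{spi-combined-prop}(c) the $s_{p^i}$-eigenvalue of $t^{\ul{b}}$ depends only on $|\ul{b}|$, so $(D^e_{R[t]})_0$ preserves the multi-eigenspaces, while $(D^e_{R[t]})_d$ shifts the eigenvalue index by $-d \pmod{p^e}$. Using Lemma \ref{delta_dec} with $L = \{0, \ldots, p^e - 1\}^r$, the $\alpha$-eigenspace component of $\delta$ is the sum of the $u_{\ul{a}} f^{\ul{a}} Q^e_{\ul{a}}$ over $\ul{a} \in L$ with $|\ul{a}| \equiv |\alpha| \pmod{p^e}$, and this collection partitions naturally via $|\ul{a}| = |\alpha| + s p^e$ for $s = 0, 1, \ldots, r-1$.

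The core step is to produce, for each $s$, a well-defined $D^e_R$-linear map $\psi_s: D^e_R \cdot \fa^{|\alpha|+sp^e}/D^e_R \cdot \fa^{|\alpha|+sp^e+1} \to (N^e_\fa)_\alpha$ sending the class of $\xi(f^{\ul{a}})$ (for $|\ul{a}| = |\alpha|+sp^e$ and $\ul{a} \in L$) to the class of $\xi \cdot u_{\ul{a}} f^{\ul{a}} Q^e_{\ul{a}}$, and then to assemble these into an isomorphism $\bigoplus_s \psi_s$. Both well-definedness and the vanishing on $D^e_R \cdot \fa^{|\alpha|+sp^e+1}$ follow from the identity $V^1 D^e_{R[t]} \cdot \delta = V^0 D^e_{R[t]} \cdot \fa \delta$: any extra factor of $f_j \in \fa$ can be absorbed into the denominator. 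Surjectivity uses that modulo $V^1 D^e_{R[t]} \cdot \delta$ every element of $V^0 D^e_{R[t]} \cdot \delta$ reduces to a $D^e_R$-combination of the $u_{\ul{a}} f^{\ul{a}} Q^e_{\ul{a}}$, with the contributions from higher-degree operators acting on other eigenspaces $\delta_\beta$ accounting for the $s > 0$ summands.

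The hard part will be the injectivity of $\bigoplus_s \psi_s$, i.e., showing that distinct $s$-summands contribute independent classes in $(N^e_\fa)_\alpha$. I expect this to follow by combining the $R$-linear independence of the $Q^e_{\ul{a}}$ from Proposition \ref{Q-lin-indep} with the explicit expansion of each $Q^e_{\ul{a}}$ in the basis $\{\delta_\nu\}_\nu$ provided by Lemma \ref{Q-decomp}, through a leading-term argument in the index $|\nu|$: any relation holding modulo $V^0 D^e_{R[t]} \cdot \fa \delta$ must already arise from $\fa$-multiplication, and hence be absorbable into the denominator. Once part (a) is established, part (b) follows immediately from Lemma \ref{caralgdiffop}(c), which converts each vanishing $D^e_R \cdot \fa^m = D^e_R \cdot \fa^{m+1}$ into the equivalent $\cC^e_R \cdot \fa^m = \cC^e_R \cdot \fa^{m+1}$.
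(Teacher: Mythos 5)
Your reduction to $N^e_\fa = V^0 D^e_{R[t]}\cdot\delta / V^0 D^e_{R[t]}\cdot\fa\delta$ and your invocation of Lemma~\ref{delta_dec} are both on target, and the deduction of part (b) from part (a) via Lemma~\ref{caralgdiffop} is correct. But the core step is mis-indexed, and this is a genuine gap, not a cosmetic one. First, $\psi_s$ as defined is ambiguous: for a fixed $s$ there are in general many tuples $\ul{a}\in\{0,\ds,p^e-1\}^r$ with $|\ul{a}|=|\alpha|+sp^e$, and the prescription ``send the class of $\xi(f^{\ul{a}})$ to $\xi\cdot u_{\ul{a}} f^{\ul{a}} Q^e_{\ul{a}}$'' produces a different element of $(N^e_\fa)_\alpha$ for each choice of $\ul{a}$ (the $Q^e_{\ul{a}}$ are $R$-linearly independent by Proposition~\ref{Q-lin-indep}). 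Second, even if you fix one $\ul{a}$ per $s$, the assembled map $\bigoplus_{s=0}^{r-1}\psi_s$ cannot be an isomorphism: the correct decomposition (Remark~\ref{rmk-Ne-direct-sum}) is
\[
(N^e_\fa)_\alpha \cong \bigoplus_{\substack{\ul{a}\in\{0,\ds,p^e-1\}^r\\ |\ul{a}|\equiv|\alpha|\ (\mathrm{mod}\ p^e)}} \frac{D^e_R\cdot\fa^{|\ul{a}|}}{D^e_R\cdot\fa^{|\ul{a}|+1}}\,\t{Q}^e_{\ul{a}},
\]
so the module with a given $s$ appears with multiplicity $\#\{\ul{a}:|\ul{a}|=|\alpha|+sp^e\}$, typically strictly greater than one (e.g.\ $r=3$, $e=1$, $p=2$, $s=1$ gives multiplicity three). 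The theorem only asserts that each $s$-module occurs at least once, so an isomorphism onto $\bigoplus_{s}$ with multiplicity one would be both too strong and false.

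What makes the paper's argument work is that it never collapses to the index $s$: Lemma~\ref{lemman2} factors $(D^e_{R[t]})_0 \cong D^e_R\otimes(D^e_P)_0$ and exhibits an explicit $\F_p$-basis $\sigma_{\ul{a}\to\ul{c}}$ of $(D^e_P)_0$, and then the crucial Lemma~\ref{lemman3} shows $R(D^e_P)_0\cdot\delta = \bigoplus_{\ul{a}}\fa^{|\ul{a}|}Q^e_{\ul{a}}$ as a genuinely direct sum indexed by $\ul{a}$ (directness via Proposition~\ref{Q-lin-indep}, both inclusions via the explicit action of $\sigma_{\ul{a}\to\ul{c}}$ from Lemma~\ref{sigma_lemma}). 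That is the content your ``leading-term argument'' is gesturing at, and it needs to be carried out over the index set of tuples $\ul{a}$, not over $s$. If you replace your $\psi_s$ by the paper's $\ul{a}$-indexed decomposition, the rest of your outline goes through.
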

Moreover, we are able to give a complete description of the module $(N^e_\fa)_\alpha$: see Remark \ref{rmk-Ne-direct-sum}. We will also show later that the set $\{\alpha \in \F_p^e : N^e_\alpha \neq 0\}$ is independent of the initial choice of generators (Corollary \ref{cor-trun-bs-root-indep}).

Observe that part (b) of the theorem follows from part(a) and Lemma \ref{caralgdiffop}. We begin working towards the proof of (a).

Recall that we give $R[t]$ an $\N_0$-grading by $\deg t_i= 1$, and that the rings $D^e_{R[t]}$ inherit $\Z$-gradings. We denote $(D^e_{R[t]})_d$ the degree $d$ piece of this grading. Let $I := (t_1, \ds, t_r)$. 

\begin{lemma} \label{lemman1}
	We have
	$$N^e_\fa = \frac{(D^e_{R[t]})_0 \cdot \delta}{(D^e_{R[t]})_0 \cdot \fa \delta}.$$
\end{lemma}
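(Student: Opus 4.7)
The plan is to unpack both quotients using Lemma \ref{V-filt-lemma} and reduce everything to one calculation in $H^r_J R[t]$, namely $t_i \delta = f_i \delta$ for each $i$.

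First I would apply Lemma \ref{V-filt-lemma}(a), which gives $V^j D^e_{R[t]} = \bigoplus_{d \geq j}(D^e_{R[t]})_d$ for $j = 0, 1$. Hence
$$V^j D^e_{R[t]} \cdot \delta = \sum_{d \geq j}(D^e_{R[t]})_d \cdot \delta.$$
Part (b) of the same lemma then lets me replace $(D^e_{R[t]})_d = (D^e_{R[t]})_0 \cdot S_d$, where $S_d \sq R[t]$ denotes the $R$-span of the degree-$d$ monomials in $t_1, \ds, t_r$; each summand becomes $(D^e_{R[t]})_0 \cdot (S_d \cdot \delta)$.

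The central step is the identification $S_d \cdot \delta = \fa^d \delta$ inside $H^r_J R[t]$. It suffices to check $t_i \delta = f_i \delta$ for each $i$, for then $t^{\ul b} \delta = f^{\ul b} \delta$ for every multi-exponent $\ul b$, and both $S_d \delta$ and $\fa^d \delta$ are the $R$-span of the same collection of elements. In turn, $t_i \delta = f_i \delta$ amounts to $(f_i - t_i)\delta = 0$ in $H^r_J R[t]$, a standard feature of the \v{C}ech presentation: the element $(f_i - t_i) (f-t)^{-\mathbbm{1}}$ simplifies to $1/\prod_{j \neq i}(f_j - t_j)$, which lies in the image of the $(r-1)$st \v{C}ech differential and therefore vanishes in $H^r_J R[t]$.

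To finish I would note that for $d \geq 0$ we have $\fa^d \sq R \sq (D^e_{R[t]})_0$, so $(D^e_{R[t]})_0 \cdot \fa^d \delta \sq (D^e_{R[t]})_0 \cdot \delta$, while for $d \geq 1$ we have $\fa^d \sq \fa$, so $(D^e_{R[t]})_0 \cdot \fa^d \delta \sq (D^e_{R[t]})_0 \cdot \fa \delta$. Combined with the observation that the $d = 0$ summand (resp.\ the $d = 1$ summand) already exhausts the containment, this yields $V^0 D^e_{R[t]} \cdot \delta = (D^e_{R[t]})_0 \cdot \delta$ and $V^1 D^e_{R[t]} \cdot \delta = (D^e_{R[t]})_0 \cdot \fa \delta$, and the lemma follows by taking quotients. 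The only step that really requires argument is the vanishing $(f_i - t_i)\delta = 0$; everything else is bookkeeping with the degree decomposition.
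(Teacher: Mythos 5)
Your proof is correct and follows essentially the same route as the paper: both reduce via Lemma \ref{V-filt-lemma} to expressions of the form $(D^e_{R[t]})_0 \cdot (\text{ideal in }t)\cdot\delta$ and then convert $t_i$'s to $f_i$'s using $(f_i - t_i)\delta = 0$. The only cosmetic difference is that the paper collapses the positive degrees at once via $I\delta = \fa\delta$ (using part (c) of the V-filtration lemma), whereas you work degree-by-degree showing $S_d\delta = \fa^d\delta$ before summing; you also spell out the \v{C}ech argument for $(f_i-t_i)\delta=0$ that the paper leaves implicit.
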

\begin{proof}
	Recall that, by Lemma \ref{V-filt-lemma}, we have $V^0 D^e_{R[t]} = (D^e_{R[t]})_{\geq 0} = (D^e_{R[t]})_0 + (D^e_{R[t]})_0I $ and $V^1 D^e_{R[t]} = (D^e_{R[t]})_0 I$. Note also that since $(f_j - t_j) \delta = 0$ for all $j$ we have $I \delta = \fa \delta$. It follows that 
	\begin{align*}
	N^e_\fa & := \frac{V^0D^e_{R[t]} \cdot \delta}{V^1 D^e_{R[t]} \cdot \delta} \\
	& = \frac{(D^e_{R[t]})_0 \cdot \delta + (D^e_{R[t]})_0 \cdot \fa \delta}{(D^e_{R[t]})_0 \cdot \fa \delta} \\
	& = \frac{(D^e_{R[t]})_0 \cdot \delta}{(D^e_{R[t]})_0 \cdot \fa \delta}
	\end{align*}
	as required.
\end{proof}

We next describe the ring $(D^e_{R[t]})_0$. Let $P = k[t_1, \ds, t_r]$ where $\deg t_i = 1$ for all $i$, so that $R[t] = R \otimes_k P$. As already mentioned in the proof of Lemma \ref{V-filt-lemma} there is a natural isomorphism $D^e_R \otimes_k D^e_P \xrightarrow{\sim} D^e_{R[t]}$ which identifies $D^e_R \otimes (D^e_P)_0 \cong (D^e_{R[t]})_0$. Recall each element of $D^e_P$ is uniquely determined by its action on monomials $t^{\ul{i}}$ with $\ul{i} \in \{0, \ds, p^e - 1\}^r$ or, equivalently, its action on monomials $t^{(p^e - 1)\mathbbm{1} - \ul{i}}$ with $\ul{i} \in \{0, \ds, p^e -1 \}^r$.

Given $\ul{a} \in \{0, \ds, p^e -1\}^r$ and $\ul{c} \in \Z_{< p^e}^r$ we denote by $\sigma_{\ul{a} \to \ul{c}}$ the unique $P^{p^e}$-linear operator on $P$ with the property
$$\sigma_{\ul{a} \to \ul{c}}  \cdot t^{(p^e -1 )\mathbbm{1} - \ul{i}} = \begin{cases}
t^{(p^e - 1)\mathbbm{1} - \ul{c}} \text{ if } \ul{i} = \ul{a} \\
0 \text{ otherwise}. 
\end{cases}$$

Note that we allow the $c_j$ to be negative but we imposte $c_j < p^e$. 

Observe that $D^e_P$ is spanned by the $\sigma_{\ul{a} \to \ul{c}}$ as a $k$-vector space. Since the degree of $\sigma_{\ul{a} \to \ul{c}}$ is $|\ul{a}| - |\ul{c}|$, we see that $(D^e_P)_0$ is therefore generated over $k$ by the operators $\sigma_{\ul{a} \to \ul{c}}$ for which $|\ul{a}| = |\ul{c}|$. We conclude the following.

\begin{lemma} \label{lemman2}
	We have
	$$(D^e_{R[t]})_0 \cong D^e_R \otimes_k (D^e_P)_0 = \bigoplus_{\ul{a}, \ul{c}} D^e_R \cdot \sigma_{\ul{a} \to \ul{c}}$$
	where the sum ranges over all $\ul{a} \in \{0, \ds, p^e -1\}^r$ and $\ul{c} \in \Z_{<p^e}^r$ for which $|\ul{a}| = |\ul{c}|$. 
\end{lemma}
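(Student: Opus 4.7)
The plan is to reduce the lemma to establishing a $k$-basis of $(D^e_P)_0$ consisting of the $\sigma_{\ul{a} \to \ul{c}}$ with $|\ul{a}| = |\ul{c}|$, and then tensor up by $D^e_R$. The first isomorphism $(D^e_{R[t]})_0 \cong D^e_R \otimes_k (D^e_P)_0$ has already been recorded in the proof of Lemma \ref{V-filt-lemma}(b): the tensor decomposition $R[t] = R \otimes_k P$ yields a natural isomorphism $D^e_R \otimes_k D^e_P \xrightarrow{\sim} D^e_{R[t]}$, and this is clearly compatible with the $\Z$-grading (since $R$ sits in degree 0 and $\deg t_i = 1$), hence identifies the degree-zero pieces on both sides.

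To build the basis of $(D^e_P)_0$, I would first recall that $P$ is free over $P^{p^e}$ with $P^{p^e}$-basis $M := \{t^{(p^e - 1)\mathbbm{1} - \ul{a}} : \ul{a} \in \{0, \ds, p^e - 1\}^r\}$, so any $\phi \in D^e_P = \Hom_{P^{p^e}}(P, P)$ is uniquely determined by its values $\phi(m) \in P$ for $m \in M$. Next I would use the $k$-basis $\{t^{(p^e - 1)\mathbbm{1} - \ul{c}} : \ul{c} \in \Z_{<p^e}^r\}$ of $P$, noting that the parametrization $\ul{n} \mapsto (p^e - 1)\mathbbm{1} - \ul{n}$ bijects $\N^r$ with $\Z_{<p^e}^r$. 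Expanding each value $\phi(t^{(p^e - 1)\mathbbm{1} - \ul{a}}) = \sum_{\ul{c}} \lambda_{\ul{a}, \ul{c}} t^{(p^e - 1)\mathbbm{1} - \ul{c}}$ with only finitely many nonzero $\lambda_{\ul{a}, \ul{c}} \in k$ produces a unique decomposition $\phi = \sum_{\ul{a}, \ul{c}} \lambda_{\ul{a}, \ul{c}} \sigma_{\ul{a} \to \ul{c}}$, so $\{\sigma_{\ul{a} \to \ul{c}}\}$ is a $k$-basis of $D^e_P$.

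Finally, the grading step: $\sigma_{\ul{a} \to \ul{c}}$ sends the homogeneous element $t^{(p^e - 1)\mathbbm{1} - \ul{a}}$ (of degree $r(p^e - 1) - |\ul{a}|$) to a homogeneous element of degree $r(p^e - 1) - |\ul{c}|$ and annihilates every other element of $M$, so it is homogeneous of degree $|\ul{a}| - |\ul{c}|$. Extracting the degree-zero basis vectors gives $(D^e_P)_0 = \bigoplus_{|\ul{a}| = |\ul{c}|} k \cdot \sigma_{\ul{a} \to \ul{c}}$, and tensoring with $D^e_R$ over $k$ yields the stated direct-sum decomposition. There is no substantive obstacle here — the whole argument is a change-of-basis computation plus a grading count — but I would be careful to allow $\ul{c}$ with negative entries (i.e.\ $\ul{c} \in \Z_{<p^e}^r$ rather than $\{0, \ds, p^e - 1\}^r$) so that the $t^{(p^e-1)\mathbbm{1} - \ul{c}}$ really do exhaust every monomial of $P$, which is what makes the uniqueness of the expansion work.
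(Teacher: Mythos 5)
Your argument matches the paper's own reasoning exactly: decompose $D^e_{R[t]} \cong D^e_R \otimes_k D^e_P$ compatibly with the grading, observe that the $\sigma_{\ul{a}\to\ul{c}}$ form a $k$-basis of $D^e_P$ because $P$ is $P^{p^e}$-free on the monomials $t^{(p^e-1)\mathbbm{1}-\ul{a}}$, compute the degree $|\ul{a}|-|\ul{c}|$ of each $\sigma_{\ul{a}\to\ul{c}}$, and extract the degree-zero piece. Your explicit remark about the bijection $\ul{n}\mapsto(p^e-1)\mathbbm{1}-\ul{n}$ between $\N^r$ and $\Z_{<p^e}^r$ (hence the need to allow negative entries in $\ul{c}$) is a helpful clarification of a point the paper leaves implicit.
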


Let us observe how the operators $\sigma_{\ul{a} \to \ul{c}}$ act on the elements $Q^e_{\ul{i}}$.

\begin{lemma} \label{sigma_lemma}
	Let $\ul{a}, \ul{c}', \ul{i} \in \{0, \ds, p^e -1\}^r$ and $\ul{c}'' \in \Z_{\geq 0}^r$. We  then have:
		$$\sigma_{\ul{a} \to \ul{c}' - p^e \ul{c}''} \cdot Q^e_{\ul{i}} = \begin{cases}
		f^{p^e \ul{c}''} Q^e_{\ul{c}'} \text{ if } \ul{i} = \ul{a} \\
		0 \text{ otherwise. }
		\end{cases}$$
\end{lemma}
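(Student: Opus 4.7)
The plan is to exploit the unit $F$-module structure on $H^r_J R[t]$. Since $\sigma_{\ul{a} \to \ul{c}' - p^e \ul{c}''}$ lies in $D^e_{R[t]}$ (via the identification in Lemma \ref{lemman2}), its action on any unit $F$-module is obtained by acting on the left factor of $\nu_e$, giving
$$\sigma_{\ul{a} \to \ul{c}' - p^e \ul{c}''} \cdot \nu_e(g \otimes u) = \nu_e\bigl(\sigma_{\ul{a} \to \ul{c}' - p^e \ul{c}''} \cdot g \otimes u\bigr).$$
Applied to $Q^e_{\ul{i}} = \nu_e(t^{(p^e - 1)\mathbbm{1} - \ul{i}} \otimes \delta)$, this reduces the statement to computing $\sigma_{\ul{a} \to \ul{c}' - p^e \ul{c}''} \cdot t^{(p^e - 1)\mathbbm{1} - \ul{i}}$. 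By the very definition of $\sigma$, this is zero when $\ul{i} \neq \ul{a}$, disposing of the second case at once.

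When $\ul{i} = \ul{a}$, the action produces the monomial $t^{(p^e - 1)\mathbbm{1} - \ul{c}' + p^e \ul{c}''}$, which — since $\ul{c}'' \geq 0$ and $\ul{c}' \leq (p^e - 1)\mathbbm{1}$ componentwise — factors as $t^{p^e \ul{c}''} \cdot t^{(p^e - 1)\mathbbm{1} - \ul{c}'}$. The task is then to evaluate $\nu_e\bigl(t^{p^e \ul{c}''} \cdot t^{(p^e - 1)\mathbbm{1} - \ul{c}'} \otimes \delta\bigr)$. The decisive input is the tensor-product relation in $R[t]^{(e)} \otimes_{R[t]} H^r_J R[t]$: because the right $R[t]$-action on $R[t]^{(e)}$ is Frobenius-twisted, we have $g r^{p^e} \otimes u = g \otimes r \cdot u$ for any $r \in R[t]$. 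Applying this with $r = t^{\ul{c}''}$ moves the factor $t^{p^e \ul{c}''}$ across the tensor and produces $\nu_e\bigl(t^{(p^e-1)\mathbbm{1} - \ul{c}'} \otimes t^{\ul{c}''} \delta\bigr)$.

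At this point I invoke the identity $t_j \delta = f_j \delta$, which holds in $H^r_J R[t]$ because $(f_j - t_j) \delta = 0$ together with the vanishing $\delta_{\mathbbm{1} - e_j} = 0$ in local cohomology. This replaces $t^{\ul{c}''}$ by $f^{\ul{c}''}$ on the right of the tensor. Since $f^{\ul{c}''} \in R \subseteq R[t]$, the same Frobenius-twisted tensor relation moves it back to the left factor as $f^{p^e \ul{c}''}$, and the left $R[t]$-linearity of $\nu_e$ extracts it as a scalar to yield $f^{p^e \ul{c}''} \cdot \nu_e(t^{(p^e-1)\mathbbm{1} - \ul{c}'} \otimes \delta) = f^{p^e \ul{c}''} Q^e_{\ul{c}'}$. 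There is no serious obstacle here; the only point demanding care is the bookkeeping for exponents crossing the tensor symbol, since each crossing multiplies them by $p^e$ — this is precisely why the statement distinguishes the $\ul{c}'$ part (which stays entry-wise bounded by $p^e$) from the $p^e \ul{c}''$ part (which emerges as a genuine power of $f$ outside).
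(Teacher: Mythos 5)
Your proof is correct and follows essentially the same route as the paper's: both apply the unit $F$-module structure to reduce to a monomial computation, factor out $t^{p^e\ul{c}''}$, move it across the Frobenius-twisted tensor, replace $t^{\ul{c}''}\delta$ by $f^{\ul{c}''}\delta$, and move the result back to extract $f^{p^e\ul{c}''}$ as a scalar. The only cosmetic difference is that the paper first disposes of the $\ul{c}'' = 0$ case separately before handling the extra $t^{p^e\ul{c}''}$ factor, whereas you fold everything into one computation.
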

\begin{proof}
	Recall that our operators act via the unit $F$-module structure. That is, if $\xi \in D^e{R[t]}$ then $\xi \cdot \nu_e(g \otimes u) = \nu_e (\xi \cdot g \otimes u)$. Let us first consider the case where $\ul{c}'' = 0$. It follows that
	$$\sigma_{\ul{a} \to \ul{c}'} Q^e_{\ul{i}} = \nu_e (\sigma_{\ul{a} \to \ul{c}'} \cdot t^{(p^e - 1)\mathbbm{1} - \ul{i}} \otimes \delta)$$
	which gives the statement. To complete the proof note that 
	$$\sigma_{\ul{a} \to \ul{c}' - p^e \ul{c}''} = t^{p^e \ul{c}''} \sigma_{\ul{a} \to \ul{c}'}$$
	and that
	\begin{align*}
	t^{p^e \ul{c}''} Q^e_{\ul{c}'} & = \nu_e(t^{(p^e -1)\mathbbm{1} - \ul{c}'} \otimes t^{\ul{c}''} \delta) \\
	& = \nu_e(t^{(p^e -1) \mathbbm{1} - \ul{c}'} \otimes f^{\ul{c}''} \delta) \\
	& = f^{p^e \ul{c}''} \nu_e(t^{(p^e -1)\mathbbm{1} - \ul{c}'} \otimes \delta) \\
	& = f^{p^e \ul{c}''} Q^e_{\ul{c}'}. \qedhere
	\end{align*}
\end{proof}
With this one last lemma, the proof of the theorem is almost complete.

\begin{lemma} \label{lemman3}
	We have
	$$R (D^e_P)_0 \cdot \delta = \bigoplus_{\ul{a} \in \{0, \ds, p^e -1\}^r} \fa^{|\ul{a}| } \ Q^e_{\ul{a}}.$$
\end{lemma}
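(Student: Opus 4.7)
The plan is to reduce the computation to the explicit action of the elementary operators $\sigma_{\ul{a}\to \ul{c}}$ on the basis elements $Q^e_{\ul{i}}$, and then to parametrize things so that the powers of $f$ that show up are exactly monomial generators of the ideals $\fa^{|\ul{c}'|}$. By Lemma \ref{lemman2}, $(D^e_P)_0$ is spanned over $k$ by the operators $\sigma_{\ul{a} \to \ul{c}}$ with $\ul{a} \in \{0, \ds, p^e -1\}^r$, $\ul{c} \in \Z_{<p^e}^r$, and $|\ul{a}| = |\ul{c}|$. Therefore $R (D^e_P)_0 \cdot \delta$ equals the $R$-submodule of $H^r_J R[t]$ generated by the elements $\sigma_{\ul{a} \to \ul{c}} \cdot \delta$ for such pairs $(\ul{a}, \ul{c})$. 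Since $(D^e_P)_0 \sq D^e_{R[t]}$ acts $R$-linearly, the crucial input is the explicit formula for $\sigma_{\ul{a} \to \ul{c}} \cdot \delta$.

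To get that formula I would first use Lemma \ref{delta_dec} to write $\delta = \sum_{\ul{i}} u_{\ul{i}} f^{\ul{i}} Q^e_{\ul{i}}$ with $u_{\ul{i}} \in \F_p^{\times}$, and then decompose $\ul{c} = \ul{c}' - p^e \ul{c}''$ uniquely with $\ul{c}' \in \{0, \ds, p^e -1\}^r$ and $\ul{c}'' \in \Z_{\geq 0}^r$ (this is possible precisely because each $c_j < p^e$). Applying Lemma \ref{sigma_lemma} then yields
$$\sigma_{\ul{a} \to \ul{c}} \cdot \delta \;=\; u_{\ul{a}}\, f^{\ul{a} + p^e \ul{c}''}\, Q^e_{\ul{c}'}.$$
The degree constraint $|\ul{a}| = |\ul{c}|$ translates into $|\ul{a}| + p^e |\ul{c}''| = |\ul{c}'|$, so the monomial $f^{\ul{a}+p^e \ul{c}''}$ has total degree exactly $|\ul{c}'|$; in particular it lies in $\fa^{|\ul{c}'|}$. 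Hence
$$R (D^e_P)_0 \cdot \delta \;\sq\; \sum_{\ul{c}' \in \{0, \ds, p^e -1\}^r} \fa^{|\ul{c}'|}\, Q^e_{\ul{c}'}.$$

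For the reverse inclusion, I would argue that every monomial generator of $\fa^{|\ul{c}'|}$ is of the form $f^{\ul{d}}$ with $|\ul{d}| = |\ul{c}'|$, and such a $\ul{d}$ can be uniquely written $\ul{d} = \ul{a} + p^e \ul{c}''$ with $\ul{a} \in \{0, \ds, p^e-1\}^r$ and $\ul{c}'' \in \Z_{\geq 0}^r$; the pair $(\ul{a}, \ul{c}' - p^e \ul{c}'')$ then satisfies $|\ul{a}| = |\ul{c}' - p^e \ul{c}''|$, so $f^{\ul{d}} Q^e_{\ul{c}'}$ is (up to the unit $u_{\ul{a}}$) in the image. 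Finally, the sum is direct by Proposition \ref{Q-lin-indep}, which asserts that the $Q^e_{\ul{c}'}$ are $R$-linearly independent.

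The only place that might look delicate is keeping track of the three multi-indices $\ul{a}, \ul{c}', \ul{c}''$ and verifying the claimed parametrization of monomial generators of $\fa^{|\ul{c}'|}$; but once the decomposition $\ul{c} = \ul{c}' - p^e \ul{c}''$ is set up, everything reduces to the identity $|\ul{a}| + p^e|\ul{c}''| = |\ul{c}'|$ and the bijection between multi-exponents $\ul{d}$ of weight $|\ul{c}'|$ and pairs $(\ul{a}, \ul{c}'')$ with $\ul{a} \in \{0,\ds,p^e-1\}^r$, so no real obstacle remains.
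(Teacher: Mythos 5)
Your proof is correct and follows essentially the same route as the paper's: both directions rest on the explicit action of the degree-zero operators $\sigma_{\ul{a}\to\ul{c}}$ computed in Lemma \ref{sigma_lemma} together with the expansion of $\delta$ from Lemma \ref{delta_dec}, and directness is Proposition \ref{Q-lin-indep}. The only difference is cosmetic: in the reverse inclusion you decompose the target exponent and use $\sigma_{\ul{a}\to\ul{c}'-p^e\ul{c}''}$, whereas the paper decomposes $\ul{c}=p^e\ul{c}''+\ul{c}'$ and uses $\sigma_{\ul{c}'\to\ul{a}-p^e\ul{c}''}$; the two parametrizations are equivalent.
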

\begin{proof}
	First of all, the fact that the sum on the right-hand side is direct follows from Proposition \ref{Q-lin-indep}. 
	
	For the inclusion $(\sq)$ notice that the left-hand side is generated over $R$ by $\sigma_{\ul{a} \to \ul{c}} \cdot \delta$ where $\ul{a} \in \{0, \ds, p^e -1\}^r$ and $\ul{c} \in \Z_{<p^e}^r$ are such that $|\ul{a}| = |\ul{c}|$. Let $\ul{c}' \in \{0, \ds, p^e -1\}$ and $\ul{c}'' \in \Z_{\geq 0}^r$ be such that $\ul{c} = -p^e \ul{c}'' + \ul{c}'$. By Lemma \ref{sigma_lemma} and Lemma \ref{delta_dec},
	$$\sigma_{\ul{a} \to \ul{c}} \cdot \delta =  u_{\ul{a}} f^{\ul{a} + p^e \ul{c}''} Q^e_{\ul{c}'}.$$
	The equality $|\ul{a}| = |\ul{c}|$ implies $|\ul{a} + p^e \ul{c}''| = |\ul{c}'|$, and $\sigma_{\ul{a} \to \ul{c}} \cdot \delta \in \fa^{|\ul{c}'|} Q^e_{\ul{c}'}.$
	
	For the inclusion $(\supseteq)$ observe that the right-hand side is generated over $R$ by $f^{\ul{c}} Q^e_{\ul{a}}$ where $\ul{c} \in \Z_{\geq 0}^r$, $\ul{a} \in \{0, \ds p^e -1\}^r$ and $|\ul{c}| = |\ul{a}|$. Given such $\ul{a}$ and $\ul{c}$ choose $\ul{c}' \in \{0, \ds, p^e -1\}^r$ and $\ul{c}'' \in \Z_{\geq 0}^r$ such that $\ul{c} = p^e \ul{c}'' + \ul{c}'$. Lemmas \ref{sigma_lemma} and \ref{delta_dec} once again give
	\begin{align*}
	f^{\ul{c}} Q^e_{\ul{a}} & = \sigma_{ \ul{c}' \to \ul{a} - p^e \ul{c}''} \cdot f^{\ul{c}'} Q^e_{\ul{c}'} \\
	& = u_{\ul{c}'}^{-1} \sigma_{\ul{c}' \to \ul{a} - p^e \ul{c}''} \cdot \delta.
	\end{align*}
	To finish, we observe that the condition $|\ul{a}| = |\ul{c}|$ gives $|\ul{c}'| = |\ul{a} - p^e \ul{c}''|$ and therefore $\sigma_{\ul{c}' \to \ul{a} - p^e \ul{c}''} \in (D^e_P)_0$.
\end{proof}

\begin{proof}[Proof of Theorem \ref{thm-summands-of-Ne}]
	As mentioned, part (b) follows from part (a) and Lemma \ref{caralgdiffop}. We now prove part (a) By combining our previous lemmas we obtain:
	\begin{align*}
	N^e_\fa & = \frac{(D^e_{R[t]})_0 \cdot \delta}{(D^e_{R[t]})_0 \cdot \fa \delta} & \text{by Lemma \ref{lemman1}} \\
	& = \frac{D^e_R (D^e_P)_0 \cdot \delta}{D^e_R (D^e_P)_0 \cdot \fa \delta} & \text{by Lemma \ref{lemman2}} \\
	& = \frac{   \bigoplus_{\ul{a}} D^e_R \cdot \fa^{|\ul{a}|}Q^e_{\ul{a}}   } {\bigoplus_{\ul{a}} D^e_R \cdot \fa^{|\ul{a}| + 1}Q^e_{\ul{a}}} & \text{by Lemma \ref{lemman3}} \\
	& \cong \bigoplus_{\ul{a} \in \{0, \ds, p^e -1\}^r} \frac{D^e_R \cdot \fa^{|\ul{a}|}}{D^e_R \cdot \fa^{|\ul{a}|+1}}\t{Q}^e_{\ul{a}},
	\end{align*}
	where the operator $s_{p^i}$ acts on $\t{Q}^e_{\ul{a}}$ by the scalar $|\ul{a}|_i$ for all $i = 0, 1, \ds, e -1$ (c.f. comment above Proposition \ref{Q-lin-indep}). The result follows.	
\end{proof}

\begin{remark} \label{rmk-Ne-direct-sum} 
	In fact, it follows from the proof that 
	$$N^e_\fa \cong \bigoplus_{\ul{a} \in \{0, \ds, p^e -1\}^r} \frac{D^e_R \cdot \fa^{|\ul{a}|}}{D^e_R \cdot \fa^{|\ul{a}|+1}}\t{Q}^e_{\ul{a}},$$
	where $\t{Q}^e_{\ul{a}}$ is the image of $Q^e_{\ul{a}}$ in the quotient. It follows that 
	$$(N^e_\fa)_\alpha \cong \bigoplus_{\substack{\ul{a} \in \{0, \ds, p^e -1\}^r\\|\ul{a}| \equiv |\alpha| \mod p^e}} \frac{D^e_R \cdot \fa^{|\ul{a}|}}{D^e_R \cdot \fa^{|\ul{a}|+1}}\t{Q}^e_{\ul{a}}.$$
\end{remark}

We end with a description of the maps $N_\fa^e \to N_\fa^{e+1}$ that will be important later. From the definition of the modules $N_\fa^e$ it follows that this natural map is $V^0D^e_R[t]$-linear (in particular, $D^e_R$-linear) and thus it suffices to describe the image of the $\t{Q}^e_{\ul{a}}$. 

\begin{lemma} \label{Qe-to-Qe+1}
	The natural map $N^e_\fa \to N^{e+1}_\fa$ sends $\t{Q}^e_{\ul{a}}$ to
	$$\sum_{\ul{i} \in \{0, \ds, p-1\}^r} u_{\ul{i}} f^{p^e \ul{i}} \ \t{Q}^{e+1}_{p^e\ul{i} + \ul{a}},$$
	where $u_{\ul{i}}$ is a unit in $\F_p$.
\end{lemma}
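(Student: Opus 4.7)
The plan is to compute the image of $Q^e_{\ul{a}}$ directly inside $H^r_J R[t]$ using the unit $F$-module structure and the Frobenius identity in characteristic $p$, then reduce modulo $V^1 D^{e+1}_{R[t]} \cdot \delta$. Since the natural map $N^e_\fa \to N^{e+1}_\fa$ is induced by the inclusion $V^0 D^e_{R[t]} \cdot \delta \sq V^0 D^{e+1}_{R[t]} \cdot \delta$, it suffices to express $Q^e_{\ul{a}}$ as a combination of the $Q^{e+1}_{\ul{b}}$'s (which lie in $V^0 D^{e+1}_{R[t]} \cdot \delta$) and then pass to classes.

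First I would unwind the unit $F$-module formula. Using $\nu_e(g \otimes \delta_\nu) = g \delta_{p^e \nu}$, we obtain
$$Q^e_{\ul{a}} = \nu_e(t^{(p^e - 1)\mathbbm{1} - \ul{a}} \otimes \delta) = t^{(p^e - 1)\mathbbm{1} - \ul{a}} \delta_{p^e \mathbbm{1}}.$$
The relation $(f_j - t_j) \delta_\nu = \delta_{\nu - e_j}$ from the \v{C}ech description then gives $\delta_{p^e \mathbbm{1}} = (f - t)^{(p^{e+1} - p^e)\mathbbm{1}} \delta_{p^{e+1}\mathbbm{1}}$, so that
$$Q^e_{\ul{a}} = t^{(p^e - 1)\mathbbm{1} - \ul{a}} (f - t)^{p^e(p-1)\mathbbm{1}} \delta_{p^{e+1}\mathbbm{1}}.$$

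Next I would apply the characteristic-$p$ identity $(f - t)^{p^e} = f^{p^e} - t^{p^e}$ to write $(f - t)^{p^e(p-1)\mathbbm{1}} = (f^{p^e} - t^{p^e})^{(p-1)\mathbbm{1}}$ and expand by the multivariate binomial theorem:
$$(f^{p^e} - t^{p^e})^{(p-1)\mathbbm{1}} = \sum_{\ul{k} \in \{0,\ds,p-1\}^r} (-1)^{|\ul{k}|} \binom{(p-1)\mathbbm{1}}{\ul{k}} f^{p^e((p-1)\mathbbm{1} - \ul{k})} t^{p^e \ul{k}}.$$
Substituting the change of variables $\ul{i} := (p-1)\mathbbm{1} - \ul{k}$ (which is a bijection of $\{0, \ds, p-1\}^r$ to itself) and absorbing the monomials into the exponent of $t$, a direct bookkeeping gives
$$Q^e_{\ul{a}} = \sum_{\ul{i} \in \{0, \ds, p-1\}^r} u_{\ul{i}} f^{p^e \ul{i}} \cdot t^{(p^{e+1} - 1)\mathbbm{1} - (p^e \ul{i} + \ul{a})} \delta_{p^{e+1}\mathbbm{1}} = \sum_{\ul{i} \in \{0, \ds, p-1\}^r} u_{\ul{i}} f^{p^e \ul{i}} Q^{e+1}_{p^e \ul{i} + \ul{a}},$$
where $u_{\ul{i}} = (-1)^{p-1-i_j \text{ summed}} \prod_j \binom{p-1}{p-1-i_j}$. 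By Lucas' theorem each $\binom{p-1}{p-1-i_j}$ is nonzero mod $p$, so $u_{\ul{i}}$ is a unit in $\F_p$. Passing to classes in $N^{e+1}_\fa$ yields the claimed formula.

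The only subtle point is purely bookkeeping: making sure the exponent of $t$ after combining $t^{(p^e - 1)\mathbbm{1} - \ul{a}} \cdot t^{p^e \ul{k}}$ really lies in $\{0, \ds, p^{e+1}-1\}^r$ so that it corresponds to $Q^{e+1}_{\ul{b}}$ for $\ul{b} = p^e \ul{i} + \ul{a}$ with $\ul{b} \in \{0, \ds, p^{e+1}-1\}^r$. This is where the change of variables $\ul{i} = (p-1)\mathbbm{1} - \ul{k}$ is essential; after this swap the formula lands exactly on the indexing set the lemma demands, and the nonvanishing of the coefficients is an immediate consequence of Lucas' theorem as recorded in Lemma \ref{lucas-thm-lemma}.
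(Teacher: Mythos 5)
Your proof is correct and follows essentially the same route as the paper: rewrite $\nu_e(\cdot\otimes\delta)$ via $\nu_{e+1}$ using $\delta_{p^e\mathbbm{1}}=(f-t)^{p^e(p-1)\mathbbm{1}}\delta_{p^{e+1}\mathbbm{1}}$, apply the Frobenius identity $(f-t)^{p^e}=f^{p^e}-t^{p^e}$, expand by the multivariate binomial theorem with the substitution $\ul{i}=(p-1)\mathbbm{1}-\ul{k}$, and invoke Lucas' theorem for the nonvanishing of the coefficients. The only presentational difference is that you unwind into the concrete $\delta_\nu$ basis rather than staying in the $\nu_{e+1}(\cdot\otimes\delta)$ notation, and you explicitly check that the resulting index $\ul{b}=p^e\ul{i}+\ul{a}$ lands in $\{0,\ds,p^{e+1}-1\}^r$, which the paper leaves tacit.
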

\begin{proof}
	First of all observe that $\nu_e(1 \otimes \delta) = \nu_{e+1}((f-t)^{p^e(p-1)} \otimes \delta) = \nu_{e+1}((f^{p^e} - t^{p^e})^{p-1} \otimes \delta).$ We have:
	\begin{align*}
	Q^e_{\ul{a}} & = \nu_e\bigg(t^{(p^e -1)\mathbbm{1} -\ul{a}} \otimes \delta\bigg) \\
		& = \nu_{e+1} \bigg( t^{(p^e - 1)\mathbbm{1}}(f^{p^e} - t^{p^e})^{(p-1) \mathbbm{1}} \otimes \delta \bigg) \\ 
		& = \sum_{\ul{i} \in \{0, \ds, p-1\}^r} (-1)^{|\ul{i}|} {(p-1) \mathbbm{1} \choose \ul{i}} f^{p^e \ul{i}} \  \nu_{e + 1} \bigg( t^{(p^{e + 1} - 1)\mathbbm{1} - (p^e\ul{i} + \ul{a})} \otimes \delta \bigg)\\
		& = \sum_{\ul{i} \in \{0, \ds, p-1\}^r} u_{\ul{i}} f^{p^e \ul{i}} Q^{e+1}_{p^e \ul{i} + \ul{a}},
	\end{align*}
	where we have set
	$$u_{\ul{i}} := (-1)^{|\ul{i}|} {(p-1) \mathbbm{1} \choose \ul{i}}.$$
	This was already proved to be a unit in Lemma \ref{delta_dec}. This completes the proof.
\end{proof}

\begin{remark} \label{Qe-to-Qe+1-rmk}
	Let $\ul{a} \in \{0, \ds, p^e -1\}^r$, $\ul{c} \in \{0, \ds, p-1\}^r$ and set $\ul{b} := \ul{a} + p^e \ul{c} \in \{0, \ds, p^{e+1} -1\}^r$. It follows from Lemma \ref{Qe-to-Qe+1} that the map
	$$\frac{D^e_R \cdot \fa^{|\ul{a}|}}{D^e_R \cdot \fa^{|\ul{a}|+1}}\t{Q}^e_{\ul{a}} \to \frac{D^{e+1}_R \cdot \fa^{|\ul{b}|}}{D^{e+1}_R \cdot \fa^{|\ul{b}| + 1}} \t{Q}^{e+1}_{\ul{b}}$$
	induced by the map $N_\fa^e \to N_\fa^{e+1}$ is, up to a unit, given by multiplication by $f^{p^e \ul{c}}$. In particular, it is $D^e_R$-linear as expected.
\end{remark}

\section{Approximating polynomials in positive characteristic} \label{scn-appr-poly}

As always we fix an $F$-finite regular ring $R$ and an ideal $\fa \sq R$, as well as generators $\fa = (f_1, \ds, f_r)$ for $\fa$. 

Before defining approximating polynomials we discuss some notions that will be important both conceptually and for the relevant proofs.

\subsection{The $\nu$-invariants and $F$-thresholds}

The invariants $\nu^J_\fa(p^e)$ were introduced in \cite{MTW}. We recall the definition, and give a name to the collection of all such invariants.
\begin{definition} \label{def-nu-invt}
	Given a proper ideal $J \sq R$ containing $\fa$ in its radical and an integer $e > 0$ we define $\nu^J_\fa(p^e) := \max\{n \geq 0 : \fa^n \not\sq J^{[p^e]}\}.$ The set $\nu^\bullet_\fa(p^e) := \{ \nu^J_\fa(p^e) \ \big| \ (1) \neq \sqrt{J} \supseteq \fa \}$ is called the set of $\nu$-invariants of level $e$ for $\fa$. 
\end{definition}
In \cite{MTW} it is shown that if one fixes $J$ as above then the sequence $(\nu^J_\fa(p^e)/p^e)_{e = 0}^\infty$ is increasing an bounded. The limit
$$c^J(\fa) := \lim_{e \to \infty} \frac{\nu^J_\fa(p^e)}{p^e}$$
is called the $F$-threshold of $\fa$ with respect to $J$. The set of $F$-thresholds coincides with the set of $F$-jumping numbers \cite[Cor. 2.30]{BMSm2008}.

The following result is well-known to experts.
\begin{proposition} \label{nu-invt-trun-ti-prop}
	Fix an integer $e > 0$. The set of $\nu$-invariants of level $e$ for $\fa$ is given by
	$$\nu^\bullet_\fa(p^e) = \bigg\{n \geq 0 \ \big| \ \cC_R^e \cdot \fa^n \neq \cC_R^e \cdot \fa^{n+1} \bigg\}.$$
\end{proposition}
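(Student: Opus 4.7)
The strategy is to use the adjunction-style statement in Lemma \ref{cartier-containment-lemma}(b), namely that $\cC^e_R \cdot I \sq J$ is equivalent to $I \sq J^{[p^e]}$, to translate between the two sides of the claimed equality. The direction $(\sq)$ will be essentially automatic; the direction $(\supseteq)$ requires exhibiting, for each $n$ in the right-hand set, a proper ideal $J$ with $\fa \sq \sqrt{J}$ realizing $n$ as $\nu^J_\fa(p^e)$, and the natural candidate is $J := \cC^e_R \cdot \fa^{n+1}$.

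For $(\sq)$, suppose $n = \nu^J_\fa(p^e)$ for some proper $J$ with $\fa \sq \sqrt{J}$. Then $\fa^{n+1} \sq J^{[p^e]}$ but $\fa^n \not\sq J^{[p^e]}$. Applying Lemma \ref{cartier-containment-lemma}(b) to each statement gives $\cC^e_R \cdot \fa^{n+1} \sq J$ and $\cC^e_R \cdot \fa^n \not\sq J$, so the two Cartier-expanded ideals are distinct.

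For $(\supseteq)$, suppose $\cC^e_R \cdot \fa^n \neq \cC^e_R \cdot \fa^{n+1}$, and set $J := \cC^e_R \cdot \fa^{n+1}$. I need to verify four things: (i) $J$ is proper; (ii) $\fa \sq \sqrt{J}$; (iii) $\fa^{n+1} \sq J^{[p^e]}$; (iv) $\fa^n \not\sq J^{[p^e]}$. For (i), the containment $\cC^e_R \cdot \fa^{n+1} \sq \cC^e_R \cdot \fa^n$ is automatic, so if $J = R$ then both would equal $R$, contradicting the hypothesis. For (iii), this is immediate from Lemma \ref{cartier-containment-lemma}(a), since $\fa^{n+1} \sq D^e_R \cdot \fa^{n+1} = (\cC^e_R \cdot \fa^{n+1})^{[p^e]} = J^{[p^e]}$. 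For (iv), if one had $\fa^n \sq J^{[p^e]}$, Lemma \ref{cartier-containment-lemma}(b) would give $\cC^e_R \cdot \fa^n \sq J = \cC^e_R \cdot \fa^{n+1}$, and together with the reverse containment we would contradict the hypothesis. For (ii), it suffices to show $\fa^{n+1} \sq J$; this follows from (iii) together with the elementary observation $J^{[p^e]} \sq J$, so $\fa \sq \sqrt{\fa^{n+1}} \sq \sqrt{J}$. Finally, (iii) and (iv) together say that $n$ is exactly the maximum integer $m$ with $\fa^m \not\sq J^{[p^e]}$, i.e.\ $n = \nu^J_\fa(p^e)$.

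There is no real obstacle here: the whole argument is a clean application of Lemma \ref{cartier-containment-lemma}. The only mildly subtle point is the choice of witness $J = \cC^e_R \cdot \fa^{n+1}$ in the $(\supseteq)$ direction and the verification that it is proper, which rests on comparing $\cC^e_R \cdot \fa^n$ and $\cC^e_R \cdot \fa^{n+1}$ via their inclusion relation.
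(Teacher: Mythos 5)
Your proof is correct and follows essentially the same route as the paper's: both directions rest on Lemma \ref{cartier-containment-lemma}, and the key move in $(\supseteq)$ is the same choice of witness $J = \cC^e_R \cdot \fa^{n+1}$. The one small difference is your verification that $\fa \sq \sqrt{J}$, which you get cleanly from $\fa^{n+1} \sq J^{[p^e]} \sq J$; the paper instead argues element-by-element using $(f^m) = \cC^e_R \cdot f^{mp^e}$. Your version is slightly tidier but the two are interchangeable.
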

\begin{proof}
	First suppose that $\cC^e_R \cdot \fa^n = \cC^e_R \cdot \fa^{n+1}$. If $\fa^n \not\sq J^{[p^e]}$ then $\cC^e_R \cdot \fa^n \neq J$ by Lemma \ref{cartier-containment-lemma}. Therefore $\cC^e_R \cdot \fa^{n+1} \not\sq J$ and thus $\fa^{n+1} \not\sq J^{[p^e]}$. Therefore $n \neq \nu^J_\fa(p^e)$. This proves the inclusion $(\sq)$.
	
	For $(\supseteq)$ suppose that $n \geq 0$ is such that $\cC^e_R \cdot \fa^n \neq \cC^e_R \cdot \fa^{n+1}$. Let $J:= \cC^e_R \cdot \fa^{n+1}$. Observe that $J \neq (1)$: otherwise $\cC^e_R \cdot \fa^n = \cC^e_R \cdot \fa^{n+1}$. Given $f \in \fa$ we note that $(f^m) = \cC^e_R \cdot f^{mp^e}$ and $f^{mp^e} \in \fa^{n+1}$ for $m$ large enough, thus $f \in \sqrt{J}$. 
	
	We claim that $n = \nu^J_\fa(p^e)$ and this will complete the proof. First, by Lemma \ref{cartier-containment-lemma} one has $\fa^{n+1} \sq J^{[p^e]}$ and therefore it suffices to show that $\fa^n \not\sq J^{[p^e]}$. But this follows because if $\fa^n \sq J^[p^e]$ then $\cC^e_R \cdot \fa^n \sq J$, and thus $\cC^e_R \cdot \fa^n = \cC^e_R \cdot \fa^{n+1}$, a contradiction. 
\end{proof}
\begin{corollary} \label{nu-invt-dynamics-cor}
	If $n \geq r p^e$ is a $\nu$-invariant of level $e$ then so is $n - p^e$. 
\end{corollary}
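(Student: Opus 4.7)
The plan is to combine Proposition \ref{nu-invt-trun-ti-prop} (which characterizes $\nu$-invariants of level $e$ as integers $n$ with $\cC_R^e \cdot \fa^n \neq \cC_R^e \cdot \fa^{n+1}$) with Skoda's theorem (Proposition \ref{brsk-prop}(a)) applied at the level of Cartier ideals. The hypothesis $n \geq rp^e$ is exactly what is needed to trigger Skoda.

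First I would translate the assumption. Saying $n$ is a $\nu$-invariant of level $e$ means, by Proposition \ref{nu-invt-trun-ti-prop}, that $\cC_R^e \cdot \fa^n \neq \cC_R^e \cdot \fa^{n+1}$, and to conclude that $n - p^e$ is one as well I must show $\cC_R^e \cdot \fa^{n - p^e} \neq \cC_R^e \cdot \fa^{n - p^e + 1}$.

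Next I would apply Skoda at both exponents $n$ and $n+1$. Writing $\lambda = n/p^e \geq r$ in Proposition \ref{brsk-prop}(a) gives $\cC_R^e \cdot \fa^n = \fa \cdot \cC_R^e \cdot \fa^{n - p^e}$, and since $n+1 \geq rp^e$ as well, the same proposition (with $\lambda = (n+1)/p^e$, so that $\lceil p^e \lambda \rceil = n+1$) yields $\cC_R^e \cdot \fa^{n+1} = \fa \cdot \cC_R^e \cdot \fa^{n - p^e + 1}$.

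Finally, I would argue by contradiction: if $\cC_R^e \cdot \fa^{n - p^e} = \cC_R^e \cdot \fa^{n - p^e + 1}$, then multiplying by $\fa$ on both sides and using the two Skoda identities above gives $\cC_R^e \cdot \fa^n = \cC_R^e \cdot \fa^{n+1}$, contradicting the assumption that $n$ is a $\nu$-invariant of level $e$. So $n - p^e$ must itself be a $\nu$-invariant of level $e$, which is the desired conclusion. There is no real obstacle here; the content of the corollary is entirely concentrated in Skoda's theorem, and the proof is essentially a one-line manipulation once Proposition \ref{nu-invt-trun-ti-prop} is in hand.
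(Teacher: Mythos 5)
Your proof is correct and is essentially the paper's proof: both translate the $\nu$-invariant condition via Proposition \ref{nu-invt-trun-ti-prop}, apply Skoda's theorem at exponents $n$ and $n+1$, and conclude by the contrapositive. The only cosmetic difference is that you phrase the final step as a proof by contradiction while the paper states it directly.
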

\begin{proof}
	By Skoda's theorem (c.f. Proposition \ref{brsk-prop}) we see that whenever $n \geq rp^e$ we have $\cC^e_R \cdot \fa^n = \fa\cC^e_R \cdot \fa^{n - p^e}$ and $\cC^e_R \cdot \fa^{n + 1} = \fa \cC^e_R \cdot \fa^{n + 1 - p^e}$. If $n$ is a $\nu$-invariant, it follows from Proposition \ref{nu-invt-trun-ti-prop} that $\cC^e_R \cdot \fa^n \neq \cC^e_R \cdot \fa^{n+1}$ and thus $\cC^e_R \cdot \fa^{n - p^e} \neq \cC^e_R \cdot \fa^{n -p^e +1}$. Again by Proposition \ref{nu-invt-trun-ti-prop}, $n - p^e$ is a $\nu$-invariant of level $e$.  
\end{proof}
\subsection{Definition and immediate consequences}

We now define approximating polynomials in positive characteristic. This is analogous to the definition in \cite{Mustata2009}. As always we fix a regular $F$-finite ring $R$ and an ideal $\fa \sq R$ with generators $\fa = (f_1, \ds, f_r)$. We let $N^e := N^e_\fa$ be the modules defined in Section \ref{scn-multi-eigen-dec} with this choice of generators and, for $\alpha \in \F_p^e$, $N^e_\alpha := (N^e_\fa)_\alpha$ be the corresponding multi-eigenspace for the action of $s_{p^0}, \ds, s_{p^{e-1}}$ (c.f. Proposition \ref{spi-combined-prop} (g)). We thus drop the subscript $\fa$ from the notation. Recall that given $\alpha = (\alpha_0, \ds, \alpha_{e-1}) \in \F_p^e$ we denote by $|\alpha|$ the integer $|\alpha| := \alpha_0 + p \alpha_1 + \cds + p^{e-1} \alpha_{e-1}$, where we identify each $\alpha_i \in \F_p$ with its unique representative in $\{0, \ds, p-1\}$. With this notation the definition is as follows.

\begin{definition}
	Let $e > 0$ be an integer. The $e$-th approximating polynomial for $\fa$ is given by
	$$a^e_\fa(s) := \prod_{\{\alpha \in \F_p^e \ : \ N^e_\alpha \neq 0\}}  \lp s - \frac{|\alpha|}{p^e} \rp .$$
\end{definition}
\begin{remark}
	In \cite{Mustata2009} these are called Bernstein-Sato polynomials. However, our computations with monomial ideals show that, unlike the Bernstein-Sato roots defined later (c.f. Definition \ref{dfn-BSroot}), these polynomials fail to provide good analogues to the classical Bernstein-Sato polynomials. Nonetheless, approximating polynomials encode information about the $F$-jumping numbers of $\fa$ and a good understanding of approximating polynomials is crucial in our approach to Bernstein-Sato roots.
\end{remark}
Note that, a priori, the approximating polynomial depends on the choice of generator $(f_1, \ds, f_r)$ for $\fa$ (since the construction of $N^e$ does). We will soon show that this is not the case (c.f. Corollary \ref{cor-ae-indep-of-gens}).

Our previous work immediately yields the following result. Recall that $\nu^\bullet_\fa(p^e)$ is the set of $\nu$-invariants of level $e$ for $\fa$ (c.f. Definition \ref{def-nu-invt} and Proposition \ref{nu-invt-trun-ti-prop}).
\begin{theorem} \label{thm-roots-of-be}
	Let $e > 0$ be an integer. Then all roots of $a^e_\fa(s)$ are simple and lie in $[0,1) \cap \Z \frac{1}{p^e} $. Moreover, the set of roots is given by
	$$\bigg\{\mu - \lfloor \mu \rfloor \ \big| \ \mu \in \frac{\nu^\bullet_\fa(p^e)}{p^e}\bigg \}.$$
\end{theorem}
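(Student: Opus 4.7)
The plan is to combine the structural description of the multi-eigenspaces $(N^e_\fa)_\alpha$ from Theorem \ref{thm-summands-of-Ne} with the characterization of $\nu$-invariants via Cartier operators in Proposition \ref{nu-invt-trun-ti-prop}, and then handle the possible mismatch in range using the ``Skoda-type'' reduction of Corollary \ref{nu-invt-dynamics-cor}.

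First I would deal with the easy assertions. By definition the roots of $a^e_\fa(s)$ are among the rationals $|\alpha|/p^e$ with $\alpha \in \F_p^e$. The map $\alpha \mapsto |\alpha|$ is a bijection between $\F_p^e$ and $\{0,1,\ldots,p^e-1\}$ (base-$p$ expansion), so different $\alpha$'s produce different linear factors. This gives immediately that every root is simple and that the roots lie in $[0,1)\cap \frac{1}{p^e}\Z$.

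Next, for the description of the root set, I would chain together three equivalences. Fix $\alpha \in \F_p^e$ and set $k := |\alpha|$.
\begin{enumerate}[(i)]
    \item By Theorem \ref{thm-summands-of-Ne}(b), $(N^e_\fa)_\alpha \neq 0$ iff there exists $s \in \{0,1,\ldots,r-1\}$ with $\cC^e_R \cdot \fa^{k + s p^e} \neq \cC^e_R \cdot \fa^{k + s p^e + 1}$.
    \item By Proposition \ref{nu-invt-trun-ti-prop}, this is equivalent to the existence of $s \in \{0,1,\ldots,r-1\}$ such that $k + s p^e \in \nu^\bullet_\fa(p^e)$.
    \item I claim this is in turn equivalent to the existence of some $\nu \in \nu^\bullet_\fa(p^e)$ whose reduction modulo $p^e$ equals $k$, i.e.\ to $\frac{k}{p^e}$ lying in the set $\{\mu - \lfloor \mu \rfloor : \mu \in \nu^\bullet_\fa(p^e)/p^e\}$.
\end{enumerate}
The forward direction of (iii) is trivial (take $\nu = k + sp^e$). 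For the reverse direction, given $\nu \in \nu^\bullet_\fa(p^e)$ with $\nu \equiv k \pmod {p^e}$, write $\nu = k + tp^e$ for a unique $t \geq 0$; if $t \leq r-1$ we are done, and if $t \geq r$ then $\nu \geq rp^e$ and Corollary \ref{nu-invt-dynamics-cor} allows us to replace $\nu$ by $\nu - p^e$, which is still a $\nu$-invariant of level $e$ and still has reduction $k$. Iterating strictly decreases $t$, so after finitely many steps we land in the range $t \in \{0,\ldots,r-1\}$.

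The main conceptual obstacle is step (iii): without the Skoda-type stability of the set of $\nu$-invariants (Corollary \ref{nu-invt-dynamics-cor}) one only gets that the roots of $a^e_\fa(s)$ capture the fractional parts of $\nu$-invariants at most $(r-1)p^e + (p^e - 1)$, not all of them; the corollary is precisely what allows us to drag arbitrarily large $\nu$-invariants back into the window $\{0,\ldots,rp^e - 1\}$ that the modules $N^e_\fa$ actually see. Once (i)--(iii) are in place, comparing the two descriptions of the roots yields the statement directly.
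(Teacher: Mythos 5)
Your proof is correct and follows essentially the same route as the paper: both chain Theorem \ref{thm-summands-of-Ne}(b), Proposition \ref{nu-invt-trun-ti-prop}, and the Skoda-type Corollary \ref{nu-invt-dynamics-cor} to move between multi-eigenspaces, ideal equalities, and $\nu$-invariants, reducing large $\nu$-invariants into the window $\{0,\dots,rp^e-1\}$ that $N^e_\fa$ detects. The only difference is cosmetic: you phrase the reduction as an explicit iteration, while the paper simply invokes the corollary to assume $0 \le m < rp^e$ outright.
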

\begin{proof}
	The fact that all roots are simple follows directly from the definition. Moreover, if $n/p^e$ is a root then it follows from Theorem \ref{thm-summands-of-Ne} that $\cC^e_R \cdot \fa^{n + sp^e} \neq \cC^e_R \cdot \fa^{n + sp^e + 1}$ for some $0 \leq s < r$. By Proposition \ref{nu-invt-trun-ti-prop} we conclude that $n + sp^e$ is a $\nu$-invariant of level $e$. If we let $\mu = (n + sp^e)/p^e$ we have $n/p^e = \mu - \lfloor \mu \rfloor$ and thus $n/p^e$ is in the set given above.
	
	Conversely, if $\mu = m/p^e$ where $m \in \nu^\bullet_\fa(p^e)$ we want to show that $\mu - \lfloor \mu \rfloor$ is a root of $a^e_\fa(s)$. From Corollary \ref{nu-invt-dynamics-cor} we may assume that $0 \leq m < rp^e$. Let $0 \leq n < p^e$ and $s \geq 0$ be such that $m = n + sp^e$ and let $\alpha \in \F_p^e$ be the unique vector with $|\alpha| = n$. By Proposition \ref{nu-invt-trun-ti-prop} we have that $\cC^e_R \cdot \fa^{n + sp^e} \neq \cC^e_R \cdot \fa^{n + sp^e + 1}$. From Theorem \ref{thm-summands-of-Ne} it follows that $N^e_\alpha \neq 0$ and thus $\mu - \lfloor \mu \rfloor = n/p^e$ is a root of $a^e_\fa(s)$.
\end{proof}
\begin{corollary} \label{cor-ae-indep-of-gens}
	The approximating polynomial $a^e_\fa(s)$ does not depend on the choice of generators of $\fa$.
\end{corollary}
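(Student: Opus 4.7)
The plan is to deduce the corollary directly from Theorem \ref{thm-roots-of-be}. That theorem describes the full set of roots of $a^e_\fa(s)$ in intrinsic terms: the roots are exactly the fractional parts of $\nu/p^e$ as $\nu$ ranges over $\nu^\bullet_\fa(p^e)$. The key observation is that the definition of $\nu^J_\fa(p^e) = \max\{n \ge 0 : \fa^n \not\subseteq J^{[p^e]}\}$ uses only the ideal $\fa$ itself (through its powers $\fa^n$), so the set $\nu^\bullet_\fa(p^e)$ is manifestly independent of the choice of generators $(f_1, \dots, f_r)$.

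First I would invoke Theorem \ref{thm-roots-of-be} to conclude that the root set of $a^e_\fa(s)$ depends only on $\fa$ and $e$. Second, I would observe that the same theorem asserts the roots are simple, and hence the polynomial $a^e_\fa(s)$ is determined (as a monic polynomial) by its set of roots. Combining these two points yields that $a^e_\fa(s)$ itself is independent of the chosen generators.

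There is no real obstacle here since all the work has already been done in establishing Theorem \ref{thm-roots-of-be} and Proposition \ref{nu-invt-trun-ti-prop}; the corollary is essentially a bookkeeping consequence. The only subtlety worth mentioning is that the modules $N^e$ themselves, as well as the individual eigenspaces $N^e_\alpha$, do depend on the generators (since the ideal $J = (f_1 - t_1, \dots, f_r - t_r)$ does), so the independence is a statement about which eigenspaces are nonzero, not about the eigenspaces themselves. This nonvanishing was precisely translated into the condition $\cC^e_R \cdot \fa^{|\alpha| + sp^e} \neq \cC^e_R \cdot \fa^{|\alpha| + sp^e + 1}$ in Theorem \ref{thm-summands-of-Ne}, which visibly depends only on $\fa$.
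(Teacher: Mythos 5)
Your argument is correct and matches the paper's (implicit) approach exactly: the paper states the corollary immediately after Theorem \ref{thm-roots-of-be} without further proof, precisely because that theorem characterizes the root set of $a^e_\fa(s)$ intrinsically via $\nu^\bullet_\fa(p^e)$, and simplicity of the roots then pins down the polynomial. Your added remark about the subtlety — that the modules $N^e$ and their eigenspaces do depend on the generators while their nonvanishing does not — is a nice clarification that the paper leaves implicit.
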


We are also able to recover \Mustata's result at this stage.
\begin{corollary} [{\cite[Thm. 6.7]{Mustata2009}}] \label{principal-ideal-cor}
	 Suppose that $\fa$ is a principal ideal. Then the roots of $a^e_\fa(s)$ are given by the rational numbers $\frac{\lceil p^e \lambda \rceil - 1}{p^e}$ as $\lambda$ ranges through all $F$-jumping numbers of $\fa$ in $(0,1]$.
\end{corollary}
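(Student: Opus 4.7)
The strategy is to combine Theorem \ref{thm-roots-of-be} with two properties specific to the principal case $r = 1$. By Corollary \ref{nu-invt-dynamics-cor}, the set $\nu^\bullet_\fa(p^e)$ is stable under subtraction by $p^e$, so the collection of roots $\{\mu - \lfloor \mu \rfloor : \mu \in \nu^\bullet_\fa(p^e)/p^e\}$ provided by Theorem \ref{thm-roots-of-be} coincides with $\{n/p^e : n \in \nu^\bullet_\fa(p^e) \cap [0, p^e)\}$. The main technical step is to establish the identity $\cC^e_R \cdot f^n = \tau(f^{n/p^e})$, valid for all $n, e \geq 0$, which converts the $\nu$-invariant criterion of Proposition \ref{nu-invt-trun-ti-prop} directly into an F-jumping number statement.

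To prove the identity, expand $\tau(\fa^{n/p^e}) = \bigcup_{D \geq 0} \cC^D_R \cdot \fa^{\lceil np^D/p^e \rceil}$; for $D \geq e$ the exponent is the integer $np^{D-e}$ and the ideal is $\fa^{np^{D-e}} = (f^n)^{[p^{D-e}]}$, which crucially is a $p^{D-e}$-th Frobenius power precisely because $\fa$ is principal. Lemma \ref{cartier-containment-lemma}(b) identifies $\cC^D_R \cdot (f^n)^{[p^{D-e}]}$ as the smallest ideal $J$ with $J^{[p^D]} \supseteq (f^n)^{[p^{D-e}]}$; using $J^{[p^D]} = (J^{[p^e]})^{[p^{D-e}]}$ together with the injectivity of the Frobenius functor on ideals in a regular $F$-finite ring (a consequence of $\cC^d_R \cdot I^{[p^d]} = I$, which follows from Lemma \ref{cartier-containment-lemma}(b) and the existence of a Frobenius splitting), this condition is equivalent to $J^{[p^e]} \supseteq (f^n)$, whose minimal solution is $\cC^e_R \cdot f^n$ by Lemma \ref{cartier-containment-lemma}(b) applied again. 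Hence every $D \geq e$ term in the union equals $\cC^e_R \cdot f^n$, and the $D < e$ terms are contained in it by monotonicity of the chain.

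With the identity established, for $n \in [0, p^e)$ we have $n \in \nu^\bullet_\fa(p^e)$ if and only if $\tau(f^{n/p^e}) \neq \tau(f^{(n+1)/p^e})$, equivalently (by right-continuity of $\tau$) if and only if there is an F-jumping number $\lambda \in (n/p^e, (n+1)/p^e]$. Any such $\lambda$ lies in $(0, 1]$ and satisfies $n = \lceil p^e \lambda \rceil - 1$, yielding the desired set of roots. The main obstacle is the identity $\cC^e_R \cdot f^n = \tau(f^{n/p^e})$: it uses the principal hypothesis in an essential way through the equality $\fa^{np^d} = (\fa^n)^{[p^d]}$, and its failure in the general case is precisely why Theorem \ref{thm-roots-of-be} must be phrased via $\nu$-invariants rather than F-jumping numbers directly.
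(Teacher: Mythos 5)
Your proof is correct and follows essentially the same route as the paper: both hinge on the identity $\cC^e_R\cdot f^n = \tau(f^{n/p^e})$ (valid precisely because $\fa$ is principal, so that $\fa^{np^{D-e}} = (f^n)^{[p^{D-e}]}$), combined with Proposition \ref{nu-invt-trun-ti-prop} and Theorem \ref{thm-roots-of-be}. The paper simply asserts the identity in one sentence, while you supply a careful derivation via Lemma \ref{caralgdiffop}(b), Frobenius splitting, and flatness of Frobenius, and you also make the reduction to $n\in[0,p^e)$ explicit via Corollary \ref{nu-invt-dynamics-cor}; both are welcome clarifications rather than a different method.
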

\begin{proof}
	Since $\fa$ is principal, for all $n, e \geq 0$ we have $\cC_R^e \cdot \fa^n = \tau(\fa^{n/p^e})$. By Proposition \ref{nu-invt-trun-ti-prop}, $n$ is a $\nu$-invariant of level $e$ if and only if there is an $F$-jumping number in the interval $(\frac{n}{p^e}, \frac{n+1}{p^e}]$. The result then follows from Theorem \ref{thm-roots-of-be} together with the observation that $n := \lceil p^e \lambda \rceil -1$ is the unique integer with $\lambda \in (\frac{n}{p^e}, \frac{n+1}{p^e}]$.
\end{proof}
\subsection{Roots of $a^e_\fa(s)$ and $F$-jumping numbers}
Theorem \ref{thm-roots-of-be}, together with the fact that the limit $c^J(\fa) = \lim_{e \to \infty} \nu^J_\fa(p^e)/p^e$ is an $F$-jumping number elucidates that the roots of the approximating polynomials should approximate the $F$-jumping numbers of $\fa$. This fact is indeed very clear in the case where $\fa$ is principal (c.f. Corollary \ref{principal-ideal-cor}). Here we begin working towards the proof of Theorem \ref{thm-roots-approx-fjn} which states, roughly, that to obtain a similar approximation property one should shift the index $e$ of the approximating polynomial by an integer we call a stable exponent. The definition as well as the subsequent lemma are inspired in \cite{Sato17}.
\begin{definition} \label{def-stable-exp}
	Given $\fa \sq R$ we call an integer $e_0>0$ a stable exponent (for $\fa$) if for all $n \in \N$ we have
	$$\tau(\fa^n) = \cC_R^{e_0} \cdot \fa^{np^{e_0}}.$$
\end{definition}
From the definition it follows that $e_0$ is stable if and only if for all $d>0$ we have  $\cC^{e_0} \fa^{n p^{e_0}} = \cC^{e_0 + d} \fa^{n p^{e_0 + d}}.$ Note that if $e_0$ is stable and $e > e_0$ then $e$ is also stable.  By Skoda's theorem (c.f. Proposition \ref{brsk-prop}), for $e_0$ to be stable it suffices to have the equality for integers $n$ with $0 \leq n < r$. In particular, stable exponents exist.

\begin{lemma} \label{stable-pe}
	Suppose $e_0$ is a stable exponent. Then for all $e>0$ we have
	$$\tau(\fa^{n/p^e}) = \cC_R^{e+e_0} \fa^{n p^{e_0}}$$
\end{lemma}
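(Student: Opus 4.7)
The plan is to compute $\tau(\fa^{n/p^e})$ directly from its definition and show that the increasing union defining it stabilizes. By definition,
$$\tau(\fa^{n/p^e}) = \bigcup_{d \geq 0} \cC_R^d \cdot \fa^{\lceil np^{d-e}\rceil},$$
and since this union is increasing it suffices to restrict to cofinal indices $d = e+j$ with $j \geq 0$, yielding
$$\tau(\fa^{n/p^e}) = \bigcup_{j \geq 0} \cC_R^{e+j}\cdot \fa^{np^j}.$$
The inclusion $\supseteq$ of the lemma is then immediate by taking $j = e_0$, so the real content is to show that this increasing union stabilizes from $j = e_0$ onward at the value $\cC_R^{e+e_0}\cdot \fa^{np^{e_0}}$.

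The key step will be to establish the identity
$$\cC_R^{a+b}\cdot I = \cC_R^a \cdot \bigl(\cC_R^b \cdot I\bigr) \qquad \text{for every ideal } I \text{ and integers } a,b \geq 0.$$
By Lemma \ref{cartier-containment-lemma}(b), the left-hand side is contained in an ideal $K$ if and only if $I \sq K^{[p^{a+b}]}$. For the right-hand side, the same lemma gives $\cC_R^a\cdot(\cC_R^b \cdot I) \sq K$ iff $\cC_R^b \cdot I \sq K^{[p^a]}$ iff $I \sq (K^{[p^a]})^{[p^b]}$; since $(K^{[p^a]})^{[p^b]} = K^{[p^{a+b}]}$, both sides of the claimed identity are characterized by exactly the same containments, hence agree.

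With this identity in hand, fix $j \geq e_0$. Stability passes to larger exponents (as noted below Definition \ref{def-stable-exp}), so $j$ is itself a stable exponent and hence $\cC_R^j \cdot \fa^{np^j} = \tau(\fa^n) = \cC_R^{e_0} \cdot \fa^{np^{e_0}}$. Applying the key identity with $a = e$, $b = j$, $I = \fa^{np^j}$ gives
$$\cC_R^{e+j}\cdot \fa^{np^j} \;=\; \cC_R^e \cdot \bigl(\cC_R^j \cdot \fa^{np^j}\bigr) \;=\; \cC_R^e \cdot \bigl(\cC_R^{e_0} \cdot \fa^{np^{e_0}}\bigr) \;=\; \cC_R^{e+e_0}\cdot \fa^{np^{e_0}},$$
where the last equality is the identity applied in the reverse direction. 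This proves the stabilization, and hence the lemma.

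The only non-routine ingredient is the identity $\cC_R^{a+b}\cdot I = \cC_R^a \cdot (\cC_R^b \cdot I)$; once it is available as a clean statement about ideals (not just about operators on the level of $\cC_R$ itself), the stabilization is formal. I expect this to be the main point of the argument, though it is not genuinely difficult.
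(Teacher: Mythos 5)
Your proof is correct and follows essentially the same approach as the paper: chain equalities using the stability of $e_0$ and the composition identity $\cC_R^{a+b}\cdot I = \cC_R^a\cdot(\cC_R^b\cdot I)$. The only difference is that the paper takes this identity for granted (writing $\cC_R^{e+e_0} = \cC_R^e\cC_R^{e_0}$ without comment), whereas you derive it cleanly from Lemma \ref{cartier-containment-lemma}(b).
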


\begin{proof}
	For all $d > 0$ we have
	\begin{align*}
	\cC_R^{e + e_0} \fa^{n p^{e_0}} & = \cC_R^e \cC_R^{e_0} \fa^{n p^{e_0}} \\
	& = \cC_R^e \cC_R^{e_0 + d} \fa^{np^{e_0 + d}} \\
	& = \cC_R^{e + e_0 + d} \fa^{np^{e_0} + d}. \qedhere
	\end{align*}
\end{proof}

\begin{lemma} \label{N-to-testideal}
	Let $e_0$ be stable and fix $e > 0$ and $\gamma \in \F_p^e$. Then the following are equivalent.
	\begin{enumerate}[(1)]
		\item For all $\beta \in \F_p^{e_0}$, $N^{e_0 + e}_{(\beta, \gamma)} = 0 $.
		\item For all integers $s$ with $0 \leq s < r$,
		\begin{equation*}
		\tau \lp \fa^{s + \frac{|\gamma|}{p^e}} \rp = \tau \lp \fa^{s + \frac{|\gamma|+1}{p^e}} \rp. \label{test-ideal-eqn}
		\end{equation*}
		\item[(2')] The above holds for all nonnegative integers $s$.
		\item The set 
		$$\bigg(  \frac{|\gamma|}{p^e}, \frac{|\gamma| + 1}{p^e} \bigg] + \{0,1, \ds, r-1\}$$
		contains no F-jumping numbers of $\fa$.
		
		\item[(3')] The set
		$$\bigg(  \frac{|\gamma|}{p^e}, \frac{|\gamma| + 1}{p^e} \bigg] + \N_0$$
		contains no F-jumping numbers of $\fa$. 
	\end{enumerate}
\end{lemma}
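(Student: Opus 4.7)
The plan is to break the five-way equivalence into three pieces: the main equivalence $(1)\Leftrightarrow(2)$, which uses Theorem~\ref{thm-summands-of-Ne} together with Lemma~\ref{stable-pe} to translate eigenspace vanishing first into Cartier-algebra equalities and then into test-ideal equalities; the dictionary $(2)\Leftrightarrow(3)$, which just uses the topology of the test-ideal filtration; and the Skoda-style extensions $(2)\Leftrightarrow(2')$ and $(3)\Leftrightarrow(3')$, which follow from Proposition~\ref{brsk-prop}.

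For $(1)\Leftrightarrow(2)$, I would concatenate indices: writing an element of $\F_p^{e_0+e}$ as $(\beta,\gamma)$ with $\beta\in\F_p^{e_0}$, one has $|(\beta,\gamma)|=|\beta|+p^{e_0}|\gamma|$, and as $\beta$ varies over $\F_p^{e_0}$ the integer $|\beta|$ sweeps out $\{0,1,\ldots,p^{e_0}-1\}$. Theorem~\ref{thm-summands-of-Ne}(b) therefore says (1) holds if and only if $\cC^{e_0+e}_R\cdot\fa^m = \cC^{e_0+e}_R\cdot\fa^{m+1}$ for every integer $m$ in the range $p^{e_0}|\gamma|+sp^{e_0+e}\leq m < p^{e_0}(|\gamma|+1)+sp^{e_0+e}$ and every $0\leq s<r$. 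By monotonicity of $m\mapsto \cC^{e_0+e}_R\cdot\fa^m$, this is equivalent to the single endpoint equality
\[
\cC^{e_0+e}_R\cdot\fa^{p^{e_0}(|\gamma|+sp^e)} \;=\; \cC^{e_0+e}_R\cdot\fa^{p^{e_0}(|\gamma|+1+sp^e)}
\]
for each $0\leq s<r$. Lemma~\ref{stable-pe}, applied with $n=|\gamma|+sp^e$ and $n=|\gamma|+1+sp^e$, rewrites the two sides as $\tau(\fa^{s+|\gamma|/p^e})$ and $\tau(\fa^{s+(|\gamma|+1)/p^e})$, yielding exactly (2).

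For $(2)\Leftrightarrow(3)$ the function $\lambda\mapsto\tau(\fa^\lambda)$ is weakly increasing and right-continuous, so for any $a<b$ one has $\tau(\fa^a)=\tau(\fa^b)$ precisely when the half-open interval $(a,b]$ contains no $F$-jumping number; applying this coordinate-wise in $s$ gives the equivalence. Finally, for $(2)\Leftrightarrow(2')$ and $(3)\Leftrightarrow(3')$ Skoda's theorem provides the identity $\tau(\fa^\lambda)=\fa\cdot\tau(\fa^{\lambda-1})$ for $\lambda\geq r$, which propagates each equality in (2) from $s$ to $s+1$ by induction starting at $s=r-1$; analogously, any $F$-jumping number $\lambda\geq r$ forces $\lambda-r$ to be a jumping number, giving $(3)\Leftrightarrow(3')$. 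The main obstacle is the first equivalence, where the indexing of multi-eigenspaces must be aligned with the indexing of test ideals: the stable exponent $e_0$ is exactly what makes this alignment work, since the factor $p^{e_0}$ in $|(\beta,\gamma)|=|\beta|+p^{e_0}|\gamma|$ is precisely the denominator required to invoke Lemma~\ref{stable-pe} and convert Cartier-algebra equalities into test-ideal equalities.
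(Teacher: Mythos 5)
Your proof is correct and takes the same route as the paper: translate $(1)$ into Cartier-algebra equalities via Theorem~\ref{thm-summands-of-Ne}(b), use monotonicity of $m \mapsto \cC^{e_0+e}_R\cdot\fa^m$ to collapse the range of exponents to its endpoints, invoke Lemma~\ref{stable-pe} to convert the two endpoint ideals to test ideals, and handle $(2)\Leftrightarrow(3)$ and the primed versions via the jumping-number dictionary and Skoda's theorem. One small slip: the map $\lambda \mapsto \tau(\fa^\lambda)$ is weakly \emph{decreasing}, not increasing (the paper itself has a typo at that spot), but your argument depends only on monotonicity together with the right-local-constancy property, so nothing breaks.
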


\begin{proof}
	It is clear that (2) and (3) (resp. (2') and (3')) are equivalent. The equivalence between (2) and (2') (resp. (3) and (3')) follows from Skoda's theorem (c.f. Proposition \ref{brsk-prop}). We conclude that (2), (2'), (3) and (3') are all equivalent.
	
	Let us show that (1) and (2) are equivalent. From Theorem \ref{thm-summands-of-Ne} we see that  $N^{e_0 + e}_{(\beta, \gamma)} = 0$ for all $\beta$ if and only if
	$$\cC^e_R \cdot \fa^{|\gamma|p^{e_0} + s p^{e + e_0} + m} = \cC^e_R \cdot \fa^{|\gamma|p^{e_0} + s p^{e + e_0} + m + 1}.$$
	for all $0 \leq m < p^{e_0}$ and $0 \leq s < r$. 
	We conclude that (1) is true if and only if for all $s$ with $0 \leq s < r$ all containments in the chain of ideals
	$$\cC^e_R \cdot \fa^{|\gamma|p^{e_0} + s p^{e + e_0}} \supseteq \cC^e_R \cdot \fa^{|\gamma|p^{e_0} + s p^{e + e_0} + 1} \supseteq \cds \supseteq \cC^e_R \cdot \fa^{|\gamma|p^{e_0} + s p^{e + e_0} + p^{e_0} -1} \supseteq \cC^e_R \cdot \fa^{(|\gamma|+1)p^{e_0} + s p^{e + e_0}}$$
	are in fact equalities. This is equivalent to the statement that, for all $s$ with $0 \leq s < r$, the first and last ideals in the chain above are equal. But, by Lemma \ref{stable-pe}, the first ideal is $\tau(\fa^{s + |\gamma|/p^e})$ and the last ideal is $\tau(\fa^{s + (|\gamma| + 1)/p^e}).$ This completes the proof.
\end{proof}
We are now ready to state and prove the theorem.
\begin{theorem} \label{thm-roots-approx-fjn}
Let $e_0$ be a stable exponent and fix integers $e > 0$ and $0 \leq k < p^e$. Then the following are equivalent.
\begin{enumerate}[(1)]
	\item There is a root of $a^{e_0 + e}_\fa(s)$ in $[\frac{k}{p^e}, \frac{k + 1}{p^e})$.
	\item There is an $F$-jumping number of $\fa$ in $(\frac{k}{p^e}, \frac{k + 1}{p^e}] + \N$.
\end{enumerate}	
\end{theorem}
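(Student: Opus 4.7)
The plan is to reduce the theorem to a direct application of Lemma \ref{N-to-testideal}, with the bridge between them coming from an elementary indexing argument. By definition the roots of $a^{e_0+e}_\fa(s)$ are exactly the numbers $|\alpha|/p^{e_0+e}$ for which $\alpha \in \F_p^{e_0+e}$ satisfies $N^{e_0+e}_\alpha \neq 0$. Writing $\alpha = (\beta, \gamma)$ with $\beta \in \F_p^{e_0}$ and $\gamma \in \F_p^e$, one checks that $|\alpha| = |\beta| + p^{e_0}|\gamma|$, and hence
$$\frac{|\alpha|}{p^{e_0+e}} \;=\; \frac{|\beta|}{p^{e_0+e}} + \frac{|\gamma|}{p^e}.$$

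First I would observe that, since $0 \leq |\beta| < p^{e_0}$, the first summand above lies in $[0, 1/p^e)$, so $|\alpha|/p^{e_0+e}$ lies in $[|\gamma|/p^e, (|\gamma|+1)/p^e)$. In particular, for this number to sit in $[k/p^e, (k+1)/p^e)$ one must have $|\gamma| = k$. Since $0 \leq k < p^e$ there is a \emph{unique} $\gamma \in \F_p^e$ with $|\gamma| = k$, namely the tuple of base-$p$ digits of $k$. Condition (1) is therefore equivalent to the statement that, for this distinguished $\gamma$, there exists some $\beta \in \F_p^{e_0}$ with $N^{e_0+e}_{(\beta,\gamma)} \neq 0$; equivalently, the negation of part (1) of Lemma \ref{N-to-testideal}.

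To finish I would invoke the equivalence $(1) \Leftrightarrow (3')$ from Lemma \ref{N-to-testideal}: the negation of (1) there is equivalent to the negation of (3'), namely the statement that the set $(|\gamma|/p^e, (|\gamma|+1)/p^e] + \N_0 = (k/p^e, (k+1)/p^e] + \N$ contains an $F$-jumping number of $\fa$. This is precisely condition (2) of the theorem, completing the argument.

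There is essentially no substantive obstacle once Lemma \ref{N-to-testideal} is in hand; the main point is the clean observation that, with $\gamma$ held fixed, the values $|\alpha|/p^{e_0+e}$ achieved as $\beta$ ranges through $\F_p^{e_0}$ fill exactly the interval $[|\gamma|/p^e, (|\gamma|+1)/p^e)$ of length $1/p^e$, which matches the window $[k/p^e, (k+1)/p^e)$ in condition (1) through the unique choice $|\gamma|=k$.
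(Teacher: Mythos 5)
Your proof is correct and takes essentially the same approach as the paper's: identify the unique $\gamma\in\F_p^e$ with $|\gamma|=k$, observe via the identity $|\alpha|=|\beta|+p^{e_0}|\gamma|$ that roots in $[\tfrac{k}{p^e},\tfrac{k+1}{p^e})$ correspond exactly to nonvanishing of some $N^{e_0+e}_{(\beta,\gamma)}$, and then invoke Lemma \ref{N-to-testideal}. The paper splits the argument into two implications while you phrase it as a single biconditional, but the content is identical.
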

\begin{proof}
	Let $\gamma \in \F_p^e$ be the unique vector with $|\gamma| = k$. Let us show that (1) implies (2). Suppose that $n/p^{e_0 + e} \in [k/p^e, (k+1)/p^e)$ is a root of $b_\fa^{e_0 + e}(s)$. As $0 \leq n < p^{e_0 + e}$ there exists a unique vector $\alpha \in \F_p^{e_0 + e}$ with $|\alpha| = n$. Since $k p^{e_0} \leq n < (k+1)p^{e_0}$ it follows that $\alpha = (\beta, \gamma)$ for some $\beta \in \F_p^{e_0}$. By Lemma \ref{N-to-testideal} we get (2).
	
	Let us now show that (2) implies (1). By Lemma \ref{N-to-testideal}, (2) implies that $N^{e_0 + e}_\alpha \neq 0$ for some $\alpha = (\beta, \gamma)$. It follows that $|\alpha|/p^{e_0 + e}$ is a root of $a^{e_0 + e}_\fa(s)$. Since $|\alpha| = |\beta| + p^{e_0} k$ and $0 \leq |\beta| < p^{e_0}$ the root $|\alpha|/p^{e_0 + e}$ is in the required interval. 
\end{proof}

\section{The algebra $\cA_p$ and its modules} \label{scn-Ap-and-modules}
Having generalized the work of \Mustata \ to the case of a general ideal we now turn to generalizing the work of Bitoun given in \cite{Bitoun2018}. Our goal will be therefore to study the module
$$N_\fa := \frac{V^0 D_{R[t]} \cdot \delta}{V^1 D_{R[t]} \cdot \delta} = \lim_{\to e} N^e_\fa,$$
and to use it to detect the $F$-jumping numbers of $\fa$. 

When detecting the $\nu$-invariants of $\fa$ on the modules $N^e_\fa$ we made use of the fact that $N^e_\fa$ carries a module structure over the algebra $\F_p[s_{p^0}, \ds, s_{p^{e-1}}]$, where the $s_{p^i}$ satisfy the relation $s_{p^i}^p = s_{p^i}$. Similarly, $N_\fa$ carries a module structure over an algebra we call $\cA_p$. The algebra $\cA_p$ will be isomorphic to the algebra $\F_p[s_{p^0}, s_{p^1}, \ds]$ where we use all of the operators $s_{p^i}$,  but it will be convenient to define it in a different way. In this section we define $\cA_p$ and study its modules in an abstract setting before turning to our analysis of $N_\fa$. Our main tools for Section \ref{scn-A-module-N} will be Proposition \ref{bddimpliesdiscrete} and Proposition \ref{M-alpha-is-lim}.

Given a $p$-adic number $\alpha \in \Z_p$ we denote by $\alpha_i$ the integers with $0 \leq \alpha_i < p$ such that $\alpha = \sum_i p^i \alpha_i$. The fact that such a representation is unique establishes a bijection between $\Z_p$ and $\F_p^\N$. We will often identify the $p$-adic number $\alpha$ with its corresponding vector $(\alpha_0, \alpha_1, \ds) \in \F_p^\N$. Given $\alpha \in \Z_p$ we denote by $\alpha_{<e}$ the vector $\alpha_{<e}:=(\alpha_0, \ds, \alpha_{e-1})$. 

\begin{definition}
	We define the algebra $\cA_p$ by
	$$\cA_p := \frac{\F_p[\pi_0, \pi_1, \ds]}{(\pi_i^p - \pi_i : i \in \N_0)}$$
	and, given an integer $e > 0$, we define
	$$\cA^e_p := \frac{\F_p[\pi_0, \ds, \pi_{e-1}]}{(\pi_i^p - \pi_i : i = 0, \ds e-1)}.$$
\end{definition}
We will think of $\cA^e_p$ as subalgebras of $\cA_p$. In this way, one has an increasing union $\cA^1_p \sq \cA^2_p \sq \cds$ with $\cA_p = \bigcup_{e = 1}^\infty \cA^e_p$. In this next lemma we see that these algebras have already appeared in our previous discussion.
\begin{lemma} For $i \in \N_0$ let $s_{p^i}$ be the differential operator on $R[t_1, \ds, t_r]$ defined in Section \ref{scn-multi-eigen-dec}. Then we have:
	\begin{enumerate}[(a)]
		\item For each $e > 0$ the map $\cA^e_p \to \F_p[s_{p^0}, \ds, s_{p^{e-1}}]$ that sends $\pi_i$ to $s_{p^i}$ is an isomorphism.
		
		\item The map $\cA_p \to \F_p[s_{p^0}, s_{p^1}, \ds ]$ that sends $\pi_i$ to $s_{p^i}$ is an isomorphism.
	\end{enumerate}
\end{lemma}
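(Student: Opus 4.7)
My plan is to construct the maps by the universal property and verify they are isomorphisms by a dimension count.

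For part (a), I would first check that the assignment $\pi_i \mapsto s_{p^i}$ extends to a well-defined $\F_p$-algebra homomorphism $\varphi_e \colon \cA_p^e \to \F_p[s_{p^0}, \ldots, s_{p^{e-1}}]$. This uses only the universal property of the polynomial ring together with two facts already recorded in Proposition \ref{spi-combined-prop}: the operators commute pairwise (part (a)) and satisfy $s_{p^i}^p = s_{p^i}$ (part (f)). These are exactly the relations imposed in $\cA_p^e$, so $\varphi_e$ exists. By construction $\varphi_e$ is surjective.

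Next I would bound the $\F_p$-dimension of $\cA_p^e$ from above. Using the relations $\pi_i^p = \pi_i$ repeatedly, every element of $\cA_p^e$ can be written as an $\F_p$-linear combination of the monomials $\pi_0^{a_0} \cdots \pi_{e-1}^{a_{e-1}}$ with $0 \leq a_i < p$, so $\dim_{\F_p} \cA_p^e \leq p^e$. (In fact equality holds since $\cA_p^e \cong \F_p[x]/(x^p - x)^{\otimes e} \cong \F_p^{p^e}$ by the Chinese Remainder Theorem applied to $x^p - x = \prod_{j \in \F_p}(x - j)$, but for the argument the inequality suffices.)

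The main step—and the only nontrivial one—is a lower bound $\dim_{\F_p} \F_p[s_{p^0}, \ldots, s_{p^{e-1}}] \geq p^e$, which forces $\varphi_e$ to be injective. For this I exploit the explicit eigenvalue formula of Proposition \ref{spi-combined-prop}(d): for each $k$ with $0 \leq k < p^e$, choose any multi-exponent $\ul{a}(k) \in \{0, \ldots, p^e-1\}^r$ with $|\ul{a}(k)| = k$ (possible since $r \geq 1$; e.g.\ take $\ul{a}(k) = (k_0 + pk_1 + \cdots, 0, \ldots, 0)$ if $k < p^e$, reducing the single entry modulo $p^e$ is unnecessary since $k < p^e$). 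Then the monomials $v_k := t^{(p^e-1)\mathbbm{1} - \ul{a}(k)} \in R[t_1, \ldots, t_r]$ are nonzero, and each is a simultaneous eigenvector for $s_{p^0}, \ldots, s_{p^{e-1}}$ with eigenvalue tuple $(k_0, k_1, \ldots, k_{e-1}) \in \F_p^e$. Since the $p^e$ eigenvalue tuples are all distinct, the $p^e$ functionals $P \mapsto (\text{eigenvalue of } P \text{ on } v_k)$ are linearly independent on $\F_p[s_{p^0}, \ldots, s_{p^{e-1}}]$, giving the required lower bound. Combining the two dimension estimates yields $\dim_{\F_p} \cA_p^e = \dim_{\F_p} \F_p[s_{p^0}, \ldots, s_{p^{e-1}}] = p^e$ and $\varphi_e$ is an isomorphism.

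For part (b), the maps $\varphi_e$ are compatible with the inclusions $\cA_p^e \hookrightarrow \cA_p^{e+1}$ and $\F_p[s_{p^0}, \ldots, s_{p^{e-1}}] \hookrightarrow \F_p[s_{p^0}, \ldots, s_{p^e}]$, so passing to the direct limit produces an isomorphism $\cA_p = \varinjlim \cA_p^e \xrightarrow{\sim} \varinjlim \F_p[s_{p^0}, \ldots, s_{p^{e-1}}] = \F_p[s_{p^0}, s_{p^1}, \ldots]$, as required. The only place where real content enters is Step~3, the construction of enough distinct simultaneous eigenvectors; everything else is formal.
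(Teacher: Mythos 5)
Your proof is correct and hinges on the same essential ingredient as the paper's, namely the eigenvalue formula of Proposition \ref{spi-combined-prop}(d), but it packages the conclusion differently. The paper first identifies $\cA_p^e$ with the algebra $\Fun(\F_p^e,\F_p)$ of all functions $\F_p^e\to\F_p$ (using the standard fact that every such function is polynomial), and then builds an explicit inverse $\phi\colon \F_p[s_{p^0},\ds,s_{p^{e-1}}]\to\Fun(\F_p^e,\F_p)$ by sending $s_{p^i}$ to the function $\alpha\mapsto(\text{eigenvalue of }s_{p^i}\text{ on }t_1^{p^e-1-|\alpha|})$, which is computed via (d). You instead run a dimension count: surjectivity of $\varphi_e$, the trivial upper bound $\dim\cA_p^e\leq p^e$, and a lower bound $\dim\F_p[s_{p^0},\ds,s_{p^{e-1}}]\geq p^e$ obtained from the $p^e$ simultaneous eigenvectors $v_k$ with pairwise distinct eigenvalue tuples. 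The only point you leave slightly compressed is the passage from ``distinct eigenvalue tuples'' to ``the $p^e$ evaluation functionals are linearly independent''; this is linear independence of distinct $\F_p$-algebra homomorphisms $\F_p[s_{p^0},\ds,s_{p^{e-1}}]\to\F_p$, which one can either cite or establish directly by exhibiting a Lagrange-type interpolation polynomial $\prod_{i<e}\prod_{j\neq (k_0)_i}\frac{s_{p^i}-j}{(k_0)_i-j}$ that isolates each $v_{k_0}$. Both routes buy the same thing at essentially the same cost; the paper's gives an explicit inverse map (and as a byproduct the identification $\cA_p^e\cong\Fun(\F_p^e,\F_p)$), while yours is a touch more self-contained in that it avoids the auxiliary functions algebra.
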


\begin{proof}
	Let $\Fun(\F_p^e, \F_p)$ be the ring of $\F_p$-valued functions on $\F_p^e$ and let $x_0, \ds, x_{e-1}$ be the coordinate functions. As $\F_p^e$ is finite all functions $\F_p^e \to \F_p$ are polynomials in the $x_i$ and therefore the map $\cA^e_p \to \Fun(\F_p^e, \F_p)$ that sends $\pi_i$ to $x_i$ is surjective. As both algebras have the same number of elements (or the same dimension over $\F_p$) this map is an isomorphism.
	
	The composition $\Fun(\F_p^e, \F_p) \xrightarrow{\sim} \cA^e_p \to \F_p[s_{p^0}, \ds, s_{p^{e-1}}]$ sends $x_i$ to $s_{p^i}$. We construct an inverse to this composition. Consider the map $\phi: \F_p[s_{p^0}, \ds, s_{p^{e-1}}] \to \Fun(\F_p^e, \F_p)$ given by
	$$\phi(s_{p^i})(\alpha) := \frac{s_{p^i} \cdot t_1^{p^e - 1 - |\alpha|}}{t_1^{p^e - 1 - |\alpha|}},$$
	where $|\alpha| = \alpha_0 + p \alpha_1 + \cds p^{e-1} \alpha_{e-1}$ after identifying each $\alpha_i$ with its unique representative in $\{0, 1, \ds, p-1\}$. It follows from Proposition \ref{spi-combined-prop} (d) that this is indeed an $\F_p$-scalar and that, moreover, $\phi(s_{p^i})(\alpha) = \alpha_i$, i.e. $\phi(s_{p^i}) = x_i$. This completes the proof of (a).
	
	The statement in (b) follows from (a) and the fact that $\cA_p = \cup_e \cA_p^e$.
\end{proof}

As we have already mentioned in Proposition \ref{spi-combined-prop}(g), the algebras $\cA^e_p$ have a nice representation theory.

\begin{lemma} \label{eigenspace-lemma-Ae-modules}
	Let $e >0$ and let $M$ be an $\cA_p^e$-module. Then $M$ is spanned by its multi-eigenspaces for the operators $\pi_0, \ds, \pi_{e -1}$. That is,
	$$M = \bigoplus_{\alpha \in \F_p^e} M_\alpha$$
	where $\pi_i$ acts on $M_\alpha$ by the scalar $\alpha_i$.
\end{lemma}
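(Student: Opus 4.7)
The plan is to exhibit a complete system of orthogonal idempotents in $\cA_p^e$ indexed by $\F_p^e$, so that the algebra splits as a product of $p^e$ copies of $\F_p$; the eigenspace decomposition of any module will then be a formal consequence. Concretely, for each $\alpha = (\alpha_0, \ldots, \alpha_{e-1}) \in \F_p^e$ I would define
$$e_\alpha := \prod_{i=0}^{e-1} \prod_{\substack{j \in \F_p \\ j \neq \alpha_i}} \frac{\pi_i - j}{\alpha_i - j} \;\in\; \cA_p^e,$$
the Lagrange interpolation polynomial that is $1$ at the point $\alpha$ and $0$ at all other points of $\F_p^e$.

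The key verifications, all straightforward in view of the defining relations $\pi_i^p - \pi_i = \prod_{j \in \F_p}(\pi_i - j) = 0$, would be: (1) $\pi_i \cdot e_\alpha = \alpha_i \cdot e_\alpha$, because $(\pi_i - \alpha_i) e_\alpha$ contains the full product $\prod_{j\in \F_p}(\pi_i - j) = 0$ as a factor; (2) $e_\alpha e_\beta = \delta_{\alpha,\beta} e_\alpha$, since whenever $\alpha_i \neq \beta_i$ for some $i$, the product $e_\alpha e_\beta$ contains the factor $(\pi_i - \alpha_i)(\pi_i - \beta_i)\prod_{j \neq \alpha_i,\beta_i}(\pi_i - j)^2$, and up to a nonzero scalar $\prod_{j}(\pi_i-j) = 0$ divides it; and (3) $\sum_{\alpha} e_\alpha = 1$, which follows from the isomorphism $\cA_p^e \simto \Fun(\F_p^e, \F_p)$ from the previous lemma, under which $e_\alpha$ corresponds to the indicator function of $\{\alpha\}$ and the sum of indicators is the constant function $1$.

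Granted these three properties, for any $\cA_p^e$-module $M$ one sets $M_\alpha := e_\alpha M$; the decomposition $M = \bigoplus_\alpha M_\alpha$ is then immediate from $\sum e_\alpha = 1$ (spanning) and from $e_\alpha e_\beta = \delta_{\alpha,\beta} e_\alpha$ (directness), while $\pi_i$ acts on $M_\alpha$ by $\alpha_i$ by property (1).

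I do not anticipate a serious obstacle: the algebra $\cA_p^e$ is finite-dimensional and, by the preceding lemma, already identified with $\Fun(\F_p^e, \F_p)$, so the existence of the orthogonal idempotents $e_\alpha$ and the splitting $\cA_p^e \cong \prod_{\alpha \in \F_p^e} \F_p$ is structurally forced. The only mildly delicate point is to write down the $e_\alpha$ explicitly and check the eigenvalue relation $\pi_i e_\alpha = \alpha_i e_\alpha$ cleanly using $\prod_{j \in \F_p}(\pi_i - j) = 0$; alternatively one could skip this calculation entirely and simply transport the statement across the isomorphism $\cA_p^e \cong \Fun(\F_p^e, \F_p)$, whose natural idempotents (the indicator functions) manifestly satisfy (1)--(3).
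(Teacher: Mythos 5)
Your proof is correct and is exactly the argument the paper leaves implicit: the paper proves this lemma by pointing to Proposition \ref{spi-combined-prop}(g), whose own proof just says ``Part (g) follows from (f),'' i.e.\ from the relations $\pi_i^p = \pi_i$, and the standard way to make that implication precise is the Lagrange-interpolation idempotent decomposition you write out. You have simply supplied the details the paper elides.
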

\begin{proof}
	This is analogous to Proposition \ref{spi-combined-prop} (g).
\end{proof}
Suppose that $M$ is an $\cA_p$-module. Then it is also a $\cA_p^e$-module for all $e$, by restriction of scalars. It follows that for all $e$ one has a decomposition as above and these decompositions are compatible in the sense that $M_\alpha \sq M_{(\alpha, j)}$ for all $\alpha \in \F_p^e$ and $j \in \F_p$. Moreover, given $\alpha \in \Z_p$ one can define $M_\alpha := \{u \in M : \pi_i \cdot u = \alpha_i u \}$ and one gets an injection $\oplus_\alpha M_\alpha \to M$. But this need not be surjective -- indeed, one could have $M_\alpha = 0$ for all $\alpha \in \Z_p$. With this in mind we introduce the following definition.

\begin{definition} \label{dfn-root-etc}
	An $\cA_p$-module $M$ is discrete if the map
	$$\bigoplus_{\alpha \in \Z_p} M_\alpha \to M$$
	is an isomorphism and there are only finitely many $\alpha$ for which $M_\alpha \neq 0$. If $M$ is discrete a $p$-adic number $\alpha$ is a root of $M$ if $M_\alpha \neq 0$. An element $u \in M$ is said to be pure if there exists some $\alpha \in \Z_p$ such that $u \in M_\alpha$. This is equivalent to saying that for all $i \geq 0$ there exists some $\alpha_i \in \F_p$ such that $\pi_i \cdot u = \alpha_i u$. If $u \neq 0$ we say that $\alpha$ is the root of $u$. If $M$ is discrete with roots $\alpha_1, \ds, \alpha_n$ we say an integer $e>0$ is separating (for $M$) if $(\alpha_i)_{<e} \neq (\alpha_j)_{<e}$ for all $i \neq j$. It is clear that separating integers exist.
\end{definition}
A simple way in which $\cA_p$-modules may arise is via a limit procedure. Our module of interest $N_\fa$ arises this way, and this extra structure will be very useful in our analysis. We make the following definitions to address this situation.
\begin{definition}
	A filtered $\cA_p$-module consists of the following data:
	\begin{enumerate}[(i)]
		\item For each $e > 0$, a $\cA_p^e$-module $M^e$.
		\item For each $e > 0$, an $\cA_p^e$-module homomorphism $\phi_e: M^e \to M^{e+1}$. 
	\end{enumerate}
	These form an abelian category $\text{Filt $\cA_p$-mod}$ in a natural way. There is a functor $\lim: \text{Filt $\cA_p$-mod} \to \text{$\cA_p$-mod}$ that sends $(M^e, \phi_e)$ to $\lim_e M^e$.
\end{definition}

We will omit $\phi_e$ from the notation and simply refer to a filtered $\cA_p$-module by the collection of modules $(M^e)$. 

In order to study $\lim M^e$ it will be convenient to replace $M^e$ with another filtered $\cA_p$-module for which the maps $M^e \to M^{e+1}$ are injective.

\begin{lemma} \label{unionization}
	Let $(M^e)$ be a filtered $\cA_p$-module. For all $e \geq 0$ let $K^e$ be the set of all $u \in M^e$ for which there exists some $d > e$ such that $u$ maps to zero under the map $M^e \to M^d$. Let $\t{M}^e := M^e/K^e$. Then:
	\begin{enumerate}[(a)]
		\item The modules $\t{M}^e$ have the structure of a filtered $\cA_p$-module.
		\item There is a surjective map $(M^e) \to (\t{M}^e)$ of filtered $\cA_p$-modules.
		\item The maps $\t{M}^e \to \t{M}^{e + 1}$ are injective.
		\item The natural map $\lim M^e \to \lim \t{M}^e$ is an isomorphism of $\cA_p$-modules.
	\end{enumerate}  
\end{lemma}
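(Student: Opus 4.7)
The plan is to verify the four parts in order, with the only real content being bookkeeping about how the various levels of $\cA_p^e$-linearity interact.

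For (a), I would first check that $K^e$ is an $\cA_p^e$-submodule of $M^e$. If $u\in K^e$ and $d>e$ is such that $u$ dies in $M^d$, then for any $a\in\cA_p^e$ the element $a\cdot u$ also dies in $M^d$: each transition map $M^f\to M^{f+1}$ (for $f\ge e$) is $\cA_p^f$-linear and in particular $\cA_p^e$-linear, so the composite $M^e\to M^d$ is $\cA_p^e$-linear. Next, $\phi_e$ descends to $\tilde M^e\to\tilde M^{e+1}$: if $u\in K^e$ kills in $M^d$ then $\phi_e(u)$ also kills in $M^d$ (it has the same image there), so $\phi_e(K^e)\subseteq K^{e+1}$.

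Part (b) is then immediate: the quotient maps $M^e\twoheadrightarrow\tilde M^e$ are surjective $\cA_p^e$-linear maps that commute with the transition maps by construction, so they assemble into a surjection of filtered $\cA_p$-modules.

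For (c), suppose $\tilde u\in\tilde M^e$ has image $0$ in $\tilde M^{e+1}$, and choose a lift $u\in M^e$. Then $\phi_e(u)\in K^{e+1}$, so there exists $d>e+1$ such that $\phi_e(u)$ maps to $0$ in $M^d$. But the image of $\phi_e(u)$ in $M^d$ equals the image of $u$ in $M^d$, hence $u\in K^e$ and $\tilde u=0$.

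For (d), the cleanest approach is to note that the natural short exact sequence
\[
0\to (K^e)\to (M^e)\to(\tilde M^e)\to 0
\]
in the abelian category $\text{Filt }\cA_p\text{-mod}$ gives, after applying the exact functor $\lim$, an exact sequence $0\to \lim K^e\to\lim M^e\to\lim \tilde M^e\to 0$. It therefore suffices to show $\lim K^e=0$. But any element of $\lim K^e$ is represented by some $u\in K^e$; by definition $u$ maps to $0$ in $M^d$ for some $d>e$, and this image in $M^d$ lies in (indeed equals $0\in$) $K^d$, so $u$ represents the zero class in $\lim K^e$. I expect no serious obstacle; the only subtlety is being careful that the transition maps are only $\cA_p^f$-linear at level $f$, but since the action of $\cA_p$ on the colimit is compatible with all these restricted actions, this causes no trouble.
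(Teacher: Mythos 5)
Correct. The paper simply writes ``Omitted'' for this lemma, so there is no proof to compare against; your argument is the standard verification one would expect. Two minor bookkeeping points you glossed over but which cause no trouble: in (a) one also needs $K^e$ closed under addition (take the larger of the two $d$'s), and in the case $d=e+1$ the element $\phi_e(u)$ is literally $0\in M^{e+1}$, which lies in $K^{e+1}$ trivially rather than by the ``same image in $M^d$'' argument.
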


\begin{proof}
	Omitted.
\end{proof}

\begin{definition}
	A filtered $\cA_p$-module $(M^e)$ is bounded if there exists some integer $K$ such that for all $e \geq 0$ one has
	$$\#\{\alpha \in \F_p^e | M^e_\alpha \neq 0\} \leq K,$$
	where the $M^e_\alpha$ are the eigenspaces given by Lemma \ref{eigenspace-lemma-Ae-modules}. 
\end{definition}

\begin{proposition} \label{bddimpliesdiscrete}
	If $(M^e)$ is a bounded filtered $\cA_p$-module then $\lim(M^e)$ is a discrete $\cA_p$-module.
\end{proposition}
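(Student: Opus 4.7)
The plan is to first apply Lemma~\ref{unionization} to reduce to the case where every transition map $\phi_e : M^e \to M^{e+1}$ is injective: the boundedness of $(M^e)$ passes to $(\t{M}^e)$ because each $\t{M}^e_\alpha$ is a quotient of $M^e_\alpha$ (the surjection in (b) is $\cA_p^e$-linear), and the limit is unchanged by (d). With this reduction I identify $M := \lim M^e$ with the increasing union $\bigcup_e M^e$, and write $S_e := \{\alpha \in \F_p^e : M^e_\alpha \neq 0\}$, so that $|S_e| \leq K$ for all $e$.

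The heart of the argument is to show that any $u \in M^e_\alpha$ lies in $\sum_{\beta \in \Z_p} M_\beta$; by Lemma~\ref{eigenspace-lemma-Ae-modules} this would then account for all of $\bigcup_e M^e$. For each $d \geq 0$, decomposing $M^{e+d}$ as an $\cA_p^{e+d}$-module gives a unique finite expression $u = \sum_{\gamma \in T_d} u^{(d)}_\gamma$ with $0 \neq u^{(d)}_\gamma \in M^{e+d}_\gamma$; every $\gamma \in T_d$ satisfies $\gamma_{<e} = \alpha$ (by $\cA_p^e$-linearity of $\phi_e$), and $T_d \sq S_{e+d}$ so $|T_d| \leq K$. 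Injectivity of the transition maps ensures that refining $u^{(d)}_\gamma$ inside $M^{e+d+1}$ still produces at least one nonzero summand, so the truncation $\F_p^{e+d+1} \to \F_p^{e+d}$ restricts to a surjection $T_{d+1} \twoheadrightarrow T_d$. The main obstacle is the bookkeeping for this stabilization: the sequence $(|T_d|)_d$ is non-decreasing and bounded by $K$, hence constant past some $D$, and then the projections $T_{d+1} \to T_d$ are bijections.

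Once stabilization is in place, each $\gamma \in T_D$ extends uniquely to a sequence $\gamma^* \in \F_p^{\N} \cong \Z_p$, and because the refinement of $u^{(D)}_\gamma$ in $M^{e+d+1}$ has only one nonzero summand for every $d \geq D$, the element $u^{(D)}_\gamma$ is a $\pi_i$-eigenvector with eigenvalue $\gamma^*_i$ for every $i \geq 0$; that is, $u^{(D)}_\gamma \in M_{\gamma^*}$ inside the limit. Thus $u = \sum_{\gamma \in T_D} u^{(D)}_\gamma \in \sum_{\beta \in \Z_p} M_\beta$. Directness of $\bigoplus_{\beta \in \Z_p} M_\beta \to M$ is routine: a finite relation among pure elements with distinct roots can be placed inside some $M^e$ with the participating initial segments $\beta_{<e}$ pairwise distinct, whereupon Lemma~\ref{eigenspace-lemma-Ae-modules} forces each summand to vanish. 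Finally, finiteness of the support follows by the same counting principle: if $M_\beta \neq 0$, a nonzero $u \in M_\beta$ lies in some $M^{e_0}$, and then $\beta_{<e'} \in S_{e'}$ for all $e' \geq e_0$; more than $K$ such $\beta$'s would eventually force $|S_{e'}| \geq K+1$ once $e'$ separates their finitely many initial segments, contradicting the bound.
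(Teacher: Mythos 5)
Your proof is correct and follows essentially the same strategy as the paper's: reduce via Lemma~\ref{unionization} to injective transition maps, then exploit boundedness through a pigeonhole argument to force the chain of nonzero eigenspaces to stabilize, which in turn forces purity. The only cosmetic difference is that the paper stabilizes the global count $\#\{\alpha \in \F_p^e : M^e_\alpha \neq 0\}$ once and deduces that \emph{every} $M^e_\alpha$ with $e$ past the stabilization index maps into a single $M_\beta$, whereas you stabilize the per-element count $|T_d|$ separately for each $u$; both versions rest on the same counting principle and yield the same conclusion.
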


\begin{proof}
	A quotient of a bounded filtered $\cA_p$-module is still bounded. By Lemma \ref{unionization} we may assume that the maps $\phi_e$ are injective and thus $M:= \lim M^e = \cup_e M^e$. In this case the sequence $\#\{\alpha \in \F_p^e | M^e_\alpha \neq 0\}$ is increasing and bounded, and therefore eventually constant. We conclude that there exists some $K'$ and $e_0$ such that for all $e \geq e_0$ one has $\#\{\alpha \in \F_p^e : M^e_\alpha \neq 0\} = K'$. 
	
	We now claim there are at most $K'$ $p$-adic integers $\alpha$ such that $M_\alpha \neq 0$. Indeed, suppose that $\beta_1, \ds, \beta_{K' + 1}$ are distinct $p$-adic integers such that $M_{\beta_i} \neq 0$ for all $i = 1, \ds, K'+1$. We may then pick $e \geq e_0$ large enough so that for all $i$ we have $M^e \cap M_{\beta_i} \neq 0$ and such that $(\beta_i)_{<e} \neq (\beta_j)_{<e}$ for all $i \neq j$. With this assumption we have $\# \{\alpha \in \F_p^e : M^e_\alpha \neq 0\} \geq K' + 1$, a contradiction.
	
	It remains to show that pure elements span the module $M$. For this suppose $v \in M$. Thus $v \in M^e$ for some $e \geq e_0$. Since $M^e = \oplus_{\alpha \in \F_p^e} M^e_\alpha$ it suffices to show that all elements of $M^e_\alpha$ are pure. For this purpose let $d > e \geq e_0$ and suppose that $\alpha \in \F_p^e$ is such that $M^e_\alpha \neq 0$. Since $M^e_\alpha = (\bigoplus_{\gamma \in \F_p^{d-e}} M^d_{(\alpha, \gamma)}) \cap M^e$ there exists a unique $\gamma \in \F_p^{d-e}$ such that $M^e_\alpha \sq M^d_{(\alpha, \gamma)}$ (otherwise the number of nonzero multi-eigenspaces increases past $K'$). It follows that all elements of $M^e_\alpha$ are pure as required.	
\end{proof}

\begin{lemma} \label{elargeimpliespure}
	Let $M$ be a discrete $\cA_p$-module and $e > 0$ be separating. Suppose that $u \in M$ is such that for all $i = 0,1, \ds, e-1$ there exists some $\alpha_i \in \F_p$ with $\pi_i \cdot u = \alpha_i \cdot u$. Then $u$ is pure.
\end{lemma}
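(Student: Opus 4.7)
The plan is to exploit the discreteness decomposition $M = \bigoplus_{\beta \in \Z_p} M_\beta$ together with the separating hypothesis to force all but one summand of $u$ to vanish. First, let $\beta^{(1)}, \ldots, \beta^{(n)}$ be the roots of $M$ (i.e.\ those $\beta$ with $M_\beta \neq 0$), which is a finite set by discreteness. Writing $u = u_1 + \cdots + u_n$ with $u_j \in M_{\beta^{(j)}}$, each $u_j$ is pure with $\pi_i \cdot u_j = (\beta^{(j)})_i \, u_j$ for every $i \geq 0$.

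Next I would apply the hypothesis $\pi_i \cdot u = \alpha_i u$ for $i < e$ to the decomposition. This yields
\[
\sum_{j=1}^n \bigl((\beta^{(j)})_i - \alpha_i\bigr) u_j = 0
\qquad \text{for all } i = 0, 1, \ldots, e-1.
\]
Since the $M_{\beta^{(j)}}$ are distinct summands in a direct sum, the nonzero $u_j$ are linearly independent, so for each $j$ with $u_j \neq 0$ we must have $(\beta^{(j)})_i = \alpha_i$ for every $i < e$. Equivalently, $(\beta^{(j)})_{<e} = (\alpha_0, \ldots, \alpha_{e-1})$ whenever $u_j \neq 0$.

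Finally, the separating hypothesis states that the truncations $(\beta^{(j)})_{<e}$ are pairwise distinct, so at most one index $j$ can satisfy this equality. Therefore at most one $u_j$ is nonzero, which means $u \in M_{\beta^{(j)}}$ for that single $j$ (or $u = 0$), and in either case $u$ is pure. The argument is essentially a linear-algebra observation, so there is no real obstacle; the only point that needs care is to invoke linear independence of elements lying in distinct summands of a direct sum before concluding digit-by-digit equality.
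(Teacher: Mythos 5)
Your proof is correct and follows essentially the same path as the paper: decompose $u$ into pure components using discreteness, observe that the hypothesis forces the first $e$ digits of each nonzero component's root to equal $(\alpha_0,\ldots,\alpha_{e-1})$, and then invoke the separating hypothesis to rule out more than one nonzero component. The paper phrases the middle step as uniqueness of the $\cA_p^e$-eigenspace decomposition from Lemma~\ref{eigenspace-lemma-Ae-modules} rather than writing out the linear-independence computation, but the content is the same.
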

\begin{proof}
	Because $M$ is discrete we may write $u = u_1 + \cds + u_n$ where $u_j$ is pure with root $\alpha_j$, where $\alpha_j \neq \alpha_k$ for $j \neq k$. But since $e$ is separating this is also the unique decomposition of $u$ into eigenvectors for $\{\pi_i : i =0, \ds, e-1\}$ coming from Lemma \ref{eigenspace-lemma-Ae-modules}. Since $u$ is itself an eigenvector for these operators all but one of the $u_i$ are zero and thus $u$ is pure.
\end{proof}

Let $(M^e)$ be a filtered $\cA_p$-module such that $\lim M^e$ is discrete (e.g. if $M^e$ is bounded) and let $\alpha \in \Z_p$. We define the modules
$$M^e_{\neq \alpha} := \bigoplus_{\beta \neq \alpha_{<e}} M^e_\beta.$$
Observe that the maps $M^e \to M^{e+1}$ restrict to maps $M^e_{\neq \alpha} \to M^{e+1}_{\neq \alpha}$ thus the collection $(M^e_{\neq \alpha})$ acquires a filtered $\cA_p$-module structure. It is in fact a subobject of $(M^e)$ in the category of filtered $\cA_p$-modules.

We can also consider
$$M^e_\alpha := M^e_{\alpha_{<e}}.$$
In this case, even though $M^e_\alpha \sq M^e$, the maps $M^e \to M^{e+1}$ do not restrict to $M^e_{\alpha} \to M^{e+1}_\alpha$ but we still have natural maps $M^e_\alpha \to M^{e+1}_\alpha$ that send an element $u \in M^e_{(\alpha_0, \ds, \alpha_{e-1})}$ to the $(\alpha_0, \ds, \alpha_e)$-component of its image under $M^e \to M^{e+1}$. This makes $(M^e)_\alpha$ into a filtered $\cA_p$-module for which $\lim M^e_\alpha$ is discrete with at most a single root $\alpha$. In the following lemma we observe that $(M^e_\alpha)$ is in fact a quotient of $(M^e)$ in the category of filtered $\cA_p$-modules.

\begin{lemma} \label{filtAmod-exseq} \ 
	\begin{enumerate}
		\item One has a short exact sequence
		$$0 \to M^e_{\neq \alpha} \to M^e \to M^e_\alpha \to 0$$
		of filtered $\cA_p$-modules.
		\item If $\lim M^e$ is discrete with roots $\alpha_1, \ds, \alpha_n$ there is a sequence
		$$0 \to \bigcap_{i = 1}^n M^e_{\neq \alpha_i} \to M^e \to \bigoplus_{i = 1}^n M^e_{\alpha_i} \to 0$$
		that is exact whenever $e$ is separating.
	\end{enumerate}
\end{lemma}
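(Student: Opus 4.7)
The plan is to reduce both statements to the eigenspace decomposition of Lemma \ref{eigenspace-lemma-Ae-modules} applied to each individual $M^e$, with the only real work being compatibility of the inclusion and projection with the transition maps $\phi_e$. The key observation is that since $\phi_e \colon M^e \to M^{e+1}$ is $\cA^e_p$-linear and the $\pi_0, \dots, \pi_{e-1}$ act on $M^{e+1}_\gamma$ by the scalars $\gamma_0, \dots, \gamma_{e-1}$, one necessarily has $\phi_e(M^e_\beta) \sq \bigoplus_{j \in \F_p} M^{e+1}_{(\beta, j)}$ for each $\beta \in \F_p^e$.

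For part (1), fix $\alpha \in \Z_p$ and $e > 0$. Lemma \ref{eigenspace-lemma-Ae-modules} gives $M^e = \bigoplus_{\beta \in \F_p^e} M^e_\beta$, and separating the summand $\beta = \alpha_{<e}$ yields a split short exact sequence $0 \to M^e_{\neq \alpha} \to M^e \to M^e_\alpha \to 0$ of $\cA^e_p$-modules at each level. It remains to check that both maps assemble into morphisms of filtered $\cA_p$-modules. For the inclusion, the observation above shows that if $\beta \neq \alpha_{<e}$ then $\phi_e(M^e_\beta) \sq \bigoplus_{j \in \F_p} M^{e+1}_{(\beta, j)}$, and since $(\beta, j) \neq \alpha_{<e+1} = (\alpha_{<e}, \alpha_e)$ for every $j$, this image lies inside $M^{e+1}_{\neq \alpha}$; thus $\phi_e$ restricts to the transition of the filtered subobject $(M^e_{\neq \alpha})$ as defined. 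For the projection, the transition on $(M^e_\alpha)$ is by definition $\phi_e$ followed by projection onto the $\alpha_{<e+1}$-component, so commutativity of the relevant square follows by writing any $u \in M^e$ as $u_0 + v$ with $u_0 \in M^e_{\alpha_{<e}}$ and $v \in M^e_{\neq \alpha}$ and noting that $\phi_e(v)$ contributes nothing to the $\alpha_{<e+1}$-component.

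For part (2), assume $e$ is separating, so that the initial segments $(\alpha_1)_{<e}, \dots, (\alpha_n)_{<e}$ are pairwise distinct. Then the eigenspace decomposition of $M^e$ reorganizes as
\[
M^e \;=\; \bigoplus_{i = 1}^n M^e_{(\alpha_i)_{<e}} \ \oplus \bigoplus_{\substack{\beta \in \F_p^e \\ \beta \notin \{(\alpha_i)_{<e}\}_i}} M^e_\beta \;=\; \bigoplus_{i = 1}^n M^e_{\alpha_i} \ \oplus\, \bigcap_{i = 1}^n M^e_{\neq \alpha_i},
\]
which immediately gives exactness of the three-term sequence at level $e$ as $\cA^e_p$-modules. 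The compatibility with transitions on the inclusion and the $n$-fold projection is inherited from part (1) applied to each $\alpha_i$ separately (intersections of kernels and direct sums of projections behave as one would expect).

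I do not expect a serious technical obstacle: the lemma is essentially a bookkeeping exercise around Lemma \ref{eigenspace-lemma-Ae-modules}. The only subtle point is the necessity of the separating hypothesis in (2): without it the initial segments $(\alpha_i)_{<e}$ could collide, in which case a single summand $M^e_\beta$ would map nontrivially to several of the $M^e_{\alpha_i}$ simultaneously and surjectivity of the projection would fail. The distinctness of the $(\alpha_i)_{<e}$ is precisely what turns the eigenspace decomposition into a clean direct-sum splitting with the stated kernel.
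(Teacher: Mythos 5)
Your proof is correct and takes the same approach as the paper, which simply asserts that the claims follow from the descriptions; you have filled in the compatibility checks with the transition maps $\phi_e$ (the key point being that $\cA^e_p$-linearity forces $\phi_e(M^e_\beta) \subseteq \bigoplus_{j \in \F_p} M^{e+1}_{(\beta,j)}$) and correctly identified why the separating hypothesis is needed for surjectivity in (2).
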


\begin{proof}
	The claim in (i) follows directly from the descriptions. The existence and left exactness of the sequence in (ii) is clear. The surjectivity of $M^e \to \oplus M^e_{\alpha_i}$ follows from the separating hypothesis.
\end{proof}

\begin{proposition} \label{M-alpha-is-lim}
	Let $(M^e)$ be a filtered $\cA_p$-module such that $M:=\lim M^e$ is discrete. Then the natural map
	$$M_\beta \to \lim M^e_\beta$$
	is an isomorphism for all $\beta \in \Z_p$.
\end{proposition}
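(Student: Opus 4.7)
The plan is to first reduce to the case where the transition maps $\phi_e: M^e \to M^{e+1}$ are all injective, using Lemma \ref{unionization}: a quick check shows the natural map $\lim M^e_\beta \to \lim \tilde{M}^e_\beta$ is also an isomorphism (surjectivity is clear from lifting representatives, and if $v \in M^e_\beta$ maps to zero in $\tilde{M}^d_\beta$ then its image in $M^d$ lies in $K^d$, hence dies in some later $M^{d'}$, killing $v$ already in $\lim M^e_\beta$). Under this reduction I identify $M^e$ with its image in $M = \bigcup_e M^e$.

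The first key observation, which I would establish as a standalone lemma, is that if $u \in M_\beta$ happens to lie in $M^e$, then $u$ already lies in $M^e_\beta$. Indeed, the identity $\pi_i u = \beta_i u$ holds in $M$ for all $i$, and for $i < e$ both sides lie in $M^e$; injectivity of the transitions then lifts the equality into $M^e$, so $u \in M^e_{\beta_{<e}}$. This says the map $\phi_\beta \colon M_\beta \to \lim M^e_\beta$ is essentially inclusion, and injectivity follows immediately: if $u$ dies in some $M^d_\beta$, then $u = 0$ in $M^d$, hence in $M$.

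For surjectivity, I take a compatible sequence $(v_e)_{e \geq e_0}$ with $v_e \in M^e_\beta$ and view it inside $M$. Decomposing $v_e = \sum_i w_{e,i}$ along the (finitely many) roots $\alpha_1, \dots, \alpha_n$ of $M$ and applying each $\pi_j$ for $j < e$, the uniqueness of the decomposition in $M$ forces $w_{e,i} = 0$ unless $(\alpha_i)_{<e} = \beta_{<e}$. Picking $e$ large enough to be separating \emph{and} to exceed, for every $\alpha_i \neq \beta$, the first index where $(\alpha_i)_k \neq \beta_k$, at most one index survives: either none (in which case $\beta$ is not a root of $M$, so $v_e = 0$ eventually and both sides are zero) or exactly one, giving $v_e \in M_\beta$.

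The main obstacle, and the step where I would be most careful, is showing that the sequence $(v_e)$ then stabilizes as an element of $M$. The transition map description places $v_e - v_{e+1}$, viewed in $M^{e+1}$, inside $\bigoplus_{j \neq \beta_e} M^{e+1}_{(\beta_{<e}, j)}$; on the other hand, the first key observation applied to $v_e - v_{e+1} \in M_\beta$ places it inside $M^{e+1}_{\beta_{<e+1}}$. These two $\cA_p^{e+1}$-eigenspace pieces of $M^{e+1}$ are complementary and so meet only in zero, forcing $v_e = v_{e+1}$ in $M$ for all large $e$. The common eventual value is the desired $u \in M_\beta$ mapping to $(v_e)$, completing the proof.
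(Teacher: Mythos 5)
Your proof is correct, and it takes a genuinely different route from the paper's. The paper splits into two cases (non-roots and roots): for a non-root $\beta$ it shows $\lim M^e_\beta = 0$ directly using Lemma \ref{elargeimpliespure}, and for the roots it invokes the short exact sequence of Lemma \ref{filtAmod-exseq}(ii) together with the exactness of filtered colimits to reduce to showing $\lim\bigcap_i M^e_{\neq\alpha_i}=0$. You instead begin by passing to the injective model $(\t{M}^e)$ via Lemma \ref{unionization} (a reduction the paper does not make in this proof), and then everything hinges on your ``key observation'' that an element of $M_\beta$ lying in $M^e$ already lies in $M^e_{\beta_{<e}}$; this gives injectivity immediately, and for surjectivity you decompose a compatible family $(v_e)$ along the finitely many roots and use the eigenspace-disjointness trick to force stabilization $v_e = v_{e+1}$ for large $e$. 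Both proofs are sound. The paper's is shorter once its lemmas are in place and leans on the exactness of colimits for the heavy lifting; yours is more hands-on and element-theoretic, and has the conceptual payoff of exhibiting $M_\beta \to \lim M^e_\beta$ concretely as an inclusion after the reduction to injective transitions, making both injectivity and the eventual stabilization visibly transparent. One small point worth spelling out in a final write-up: in the stabilization step you need $e$ large enough that both $v_e$ and $v_{e+1}$ have already been identified as elements of $M_\beta$, which is automatic once $e$ exceeds your separating threshold, but deserves an explicit word.
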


\begin{proof}
	Consider first the case where $\beta$ is not a root -- that is, where $M_\beta = 0$. We must show that $\lim M^e_\beta = 0$. Let $e$ be separating and, by enlarging it if necessary, assume that $\beta_{<e} \neq \alpha_{<e}$ for all roots $\alpha$. Let $0 \neq u \in M^e_\beta = M^e_{(\beta_0, \ds, \beta_{e-1})}$. By Lemma \ref{elargeimpliespure} the image of $u$ in $M$ is pure and, if nonzero, would have a root $\alpha$ with $\beta_{<e} = \alpha_{<e}$, a contradiction to our assumption on $e$. We conclude that the image of $u$ in $M$ is zero and thus there is a $d > e$ such that the image of $u$ under $M^e \to M^d$ is zero. The commutativity of the diagram
	$$\begin{tikzcd}
	M^e \arrow[r] \arrow[d] & M^e_\beta \arrow[d] \\
	M^d \arrow[r] & M^d_\beta
	\end{tikzcd}$$
	implies that the image of $u$ is also zero under $M^e_\beta \to M^d_\beta$.
	
	It remains to prove the statement for the roots $\alpha_1, \ds, \alpha_n$. We will prove that the map $\psi: M = \oplus_i M_{\alpha_i} \to \oplus_i \lim M^e_{\alpha_i}$ is an isomorphism and this will give the result.
	
	From Lemma \ref{filtAmod-exseq} (ii) and the exactness of $\lim$ we have a short exact sequence
	$$0 \to \lim \bigcap_{i=1}^n M^e_{\neq \alpha_i} \to M \to \lim \bigoplus_{i = 1}^n M^e_{\alpha_i} \to 0.$$
	There is a natural isomorphism $\lim \oplus_i M^e_{\alpha_i} \cong \oplus_i \lim M^e_{\alpha_i}$ and the composition
	$$M \to \lim \bigoplus_{i = 1}^n M^e_{\alpha_i} \xrightarrow{\sim} \bigoplus_{i = 1}^n \lim M^e_{\alpha_i}$$
	is equal to $\psi$. Therefore it suffices to show that $\lim \cap_i M^e_{\neq \alpha_i} = 0$. But this follows because
	$$\bigcap_{i = 1}^n M^e_{\neq \alpha_i} = \bigoplus_{\{\beta \in \F_p^e : \beta \neq (\alpha_i)_{<e} \forall i \}} M^e_\beta$$
	and if $e$ is separating every nonzero element of $M^e_\beta$ has a pure image in $M$. Therefore if $\beta \neq (\alpha_i)_{<e}$ for all $i$ one has that the image of $M^e_\beta$ in $M$ is zero -- thus also its image in $\lim \cap_i M^e_{\neq \alpha_i}.$ 
\end{proof}

\pagebreak

\section{Bernstein-Sato roots} \label{scn-A-module-N}

Let $N^e := N^e_\fa$ be the modules defined in Section \ref{scn-multi-eigen-dec}. Recall that these depend on a choice of generators $\fa = (f_1, \ds, f_r)$ for the ideal $\fa$. 

As seen in Section \ref{scn-multi-eigen-dec} the module $N^e$ has an $\cA_p^e$-module structure, where $\pi_i$ acts via the operator $s_{p^i}$. Moreover, we have maps $N^e \to N^{e+1}$ that are linear over $\cA_p^e$. This gives the collection $(N^e)$ an filtered $\cA_p$-module structure. We let $N := N_\fa = \lim N^e$. 

Throughout this section we will make use of the fact that the collection of $F$-jumping numbers for $(R, \fa)$ is discrete and rational. In the case where $R$ is of finite type over a field this was shown in \cite{BMSm2008}, and we refer to \cite{SchTucTI} for the more general statement.

\begin{proposition} \label{N-is-discrete}
	The $\cA_p$-module $N_\fa$ is discrete.
\end{proposition}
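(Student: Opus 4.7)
The plan is to reduce the claim to Proposition \ref{bddimpliesdiscrete}, which says that any bounded filtered $\cA_p$-module has discrete limit. Since $N_\fa = \lim N^e_\fa$ and $(N^e_\fa)$ is a filtered $\cA_p$-module, it suffices to verify boundedness: the number of multi-eigenspaces $(N^e_\fa)_\alpha$ that are nonzero should be bounded uniformly in $e$.

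To bound this count, I would fix a stable exponent $e_0$ for $\fa$ and, for $e \geq e_0$, decompose $\alpha \in \F_p^e$ as $\alpha = (\beta,\gamma) \in \F_p^{e_0} \times \F_p^{e - e_0}$. Lemma \ref{N-to-testideal} (the equivalence between (1) and (3')) then gives that the existence of some $\beta \in \F_p^{e_0}$ with $(N^e_\fa)_{(\beta,\gamma)} \neq 0$ is equivalent to the set
$$\bigg( \frac{|\gamma|}{p^{e - e_0}}, \frac{|\gamma| + 1}{p^{e - e_0}} \bigg] + \N_0$$
containing an $F$-jumping number of $\fa$. As $\gamma$ ranges over $\F_p^{e - e_0}$ the intervals $(|\gamma|/p^{e - e_0}, (|\gamma| + 1)/p^{e - e_0}]$ disjointly cover $(0, 1]$, so each $F$-jumping number accounts for at most one such $\gamma$, and two $F$-jumping numbers account for the same $\gamma$ only when they lie in the same $\Z$-coset.

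It remains to bound the number of $\Z$-cosets of $F$-jumping numbers. Since $r \in \N$, Skoda's theorem (Proposition \ref{brsk-prop}(c)) shows that iterated subtraction of $r$ yields an $F$-jumping number in $(0, r]$ within each coset; thus the number of cosets is at most $K := |FJ(\fa) \cap (0, r]|$. This $K$ is finite because $FJ(\fa)$ is discrete in $\R_{>0}$ and bounded below by the positive $F$-pure threshold. Combining gives $\#\{\alpha \in \F_p^e : (N^e_\fa)_\alpha \neq 0\} \leq p^{e_0} K$ for $e \geq e_0$, while the trivial bound $p^e \leq p^{e_0}$ handles $e < e_0$. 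Hence $(N^e_\fa)$ is bounded and Proposition \ref{bddimpliesdiscrete} applies.

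I do not anticipate a substantial obstacle: Lemma \ref{N-to-testideal} already encapsulates the hard translation between vanishing of eigenspaces and containment of $F$-jumping numbers in small intervals, so what remains is the combinatorial bookkeeping above together with discreteness of $FJ(\fa)$ and Skoda to collapse infinitely many jumping numbers onto finitely many $\Z$-cosets.
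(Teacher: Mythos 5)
Your proof follows essentially the same route as the paper's: reduce to Proposition \ref{bddimpliesdiscrete}, split $\alpha=(\beta,\gamma)$ with $\beta\in\F_p^{e_0}$ for a stable exponent $e_0$, and apply Lemma \ref{N-to-testideal} plus discreteness of $F$-jumping numbers and Skoda to bound the count of activated $\gamma$'s. The only cosmetic difference is that you invoke the equivalence $(1)\Leftrightarrow(3')$ and then separately apply Skoda to reduce to $\Z$-cosets represented in $(0,r]$, whereas the paper uses $(1)\Leftrightarrow(3)$ directly, which already confines attention to $(|\gamma|/p^{e-e_0},(|\gamma|+1)/p^{e-e_0}]+\{0,\ldots,r-1\}\subseteq(0,r]$.

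One small logical slip: you write that two $F$-jumping numbers account for the same $\gamma$ \emph{only when} they lie in the same $\Z$-coset, i.e.\ (same $\gamma$) $\Rightarrow$ (same coset). That direction is false in general (two jumping numbers with nearby fractional parts in the same interval $I_\gamma$ need not differ by an integer), and it is also not what you need. What you need, and what is actually true, is the converse: two $F$-jumping numbers in the same $\Z$-coset always activate the same $\gamma$, since a $\Z$-coset has a unique representative in $(0,1]$ which sits in a unique $I_\gamma$. That gives a surjection from the set of $\Z$-cosets of $F$-jumping numbers onto the set of activated $\gamma$'s, hence the bound $\#\{\gamma\}\leq\#\{\text{cosets}\}\leq K$ you want. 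With that implication corrected the argument is sound and matches the paper's.
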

\begin{proof}
	By Proposition \ref{bddimpliesdiscrete} it suffices to show that the filtered $\cA_p$-module $(N^e)$ is bounded. In turn, for this it suffices to show that there exists some $K > 0$ such that $\#\{\alpha \in \F_p^e : N^e_\alpha \neq 0\} \leq K$ for $e$ large enough. 
	
	Let $e_0$ be a stable exponent and, by making use of the discreteness of $F$-jumping numbers, let $e \gg 0$ be large enough so that every interval of the form $(\frac{k}{p^e}, \frac{k+1}{p^e}]$ with $k \in \N_0$ contains at most one $F$-jumping number of $\fa$. 
	
	Recall from Lemma \ref{N-to-testideal} that $N^{e_0 + e}_{(\beta, \gamma)} \neq 0$ if and only if there is an $F$-jumping number of $\fa$ in the set $(\frac{|\gamma|}{p^e}, \frac{|\gamma| + 1}{p^e}] + \{0, \ds, r-1\}$. If we let $N$ be the number of $F$-jumping numbers contained in $(0,r]$ it then follows that $\#\{\alpha \in \F_p^{e_0 + e} : N^{e_0 + e}_\alpha \neq 0\} \leq p^{e_0} N$. 
\end{proof}
It follows from the definition of discreteness that $N_\fa$ has a finite set of roots (c.f. Definition \ref{dfn-root-etc}), to which we give a special name.
\begin{definition} \label{dfn-BSroot}
	The Bernstein-Sato roots of $\fa$ are the roots of the $\cA_p$-module $N_\fa$. That is to say, if $\alpha \in \Z_p$ has $p$-adic expansion $\alpha_0 + p\alpha_1 + \cds$ (i.e. $\alpha_i \in \{0, 1, \ds, p-1\})$ then $\alpha$ is a Bernstein-Sato root if and only if the multi-eigenspace
	$$(N_\fa)_\alpha := \{u \in N : s_{p^i} \cdot u = \alpha_i u \text{ for all } i\}$$
	is nonzero.  We denote the set of all Bernstein-Sato roots of $\fa$ by $BS(\fa)$ which, a priori, is just a set of $p$-adic integers.
\end{definition}

\subsection{First remarks} \label{subscn-first-rem}

We would like to first address why the above definition follows naturally from the one in characteristic zero.  While issues regarding $p$-torsion in the corresponding $D$-modules have not allowed us to turn these observations into reasonable statements regarding reduction modulo $p$ of Bernstein-Sato roots, we hope that they help motivate Definition \ref{dfn-BSroot}. 

Recall that, over $\C$, the Bernstein-Sato polynomial of $\fa$ is defined as the minimal polynomial for the action of $s_1$ on $N_\fa$. The existence of a Bernstein-Sato polynomial for $\fa$ with roots $\alpha^{(1)}, \ds, \alpha^{(t)}$ is therefore equivalent to the existence of a decomposition $N_\fa = (N_\fa)_{\alpha^{(1)}} \oplus \cds \oplus (N_\fa)_{\alpha^{(t)}}$ where $(N_\fa)_{\alpha^{(j)}}$ is the $\alpha^{(j)}$-generalized eigenspace for the action of $s_1$ on $N_\fa$, i.e. we can find some $M > 0$ such that $(N_\fa)_{\alpha^{(j)}} := \{u \in N : (s_1 - \alpha^{(j)})^M \cdot u = 0\}.$ From the recursive properties of the operators $s_m$ (c.f. Proposition \ref{prop-s_m-properties}), we see that $(s_1 - \alpha)^M \cdot u = 0$ is equivalent to $(s_m - {\alpha \choose m})^M \cdot u = 0$ for all $m$, where ${x \choose m} := x (x-1) \cds (x - m + 1)/m!$. 

If $\alpha$ is in $\Z_{(p)}$ then ${\alpha \choose m}$ is also in $\Z_{(p)}$: this is clear when $\alpha$ is an integer, so we approximate $\alpha$ by integers in $p$-adic norm and observe that ${x \choose m}$, being a polynomial, is $p$-adically continuous (see \cite{KCon}). 

It follows that, in characteristic $p$, the equations $(s_{p^i} - {\alpha \choose p^i})^M \cdot u = 0$ still make sense as long as $\alpha \in \Z_{(p)}$ and, since $M$ can be replaced by a power of $p$, they are equivalent to $(s_{p^i} - {\alpha \choose p^i}) \cdot u = 0$ (c.f. Proposition \ref{prop-s_m-properties} (e)). Finally, we observe that ${\alpha \choose p^i} \equiv \alpha_i \mod p$ (we again approximate $\alpha$ by integers, for which the statement is just Lucas' theorem, Lemma \ref{lucas-thm-lemma}), and we arrive at Definition \ref{dfn-BSroot}. 

In this section we will see that Berstein-Sato roots satisfy other nice properties beyond this mild compatibility with the definition from characteristic zero. To begin with, the set of Bernstein-Sato roots is independent of the initial choice of generators for $\fa$ (Corollary \ref{cor-bs-roots-indep-gens}) and, in analogy with the characteristic zero case, Bernstein-Sato roots are negative and rational (Theorem \ref{BSroots-are-rational}). We will also show that they encode some information about the $F$-jumping numbers of $\fa$ (Theorem \ref{thm-bsroots-and-fjn}). Moreover, they give the correct notion for monomial ideals \cite{QG19b}. After providing another characterization of Bernstein-Sato roots (Proposition \ref{prop-BSroots-newChar}) we give a few examples.

\subsection{Preliminary results}

We begin with a few results that will be crucial in understanding the vanishing of the eigenspaces $N^e_\alpha$ as $e$ goes to infinity.

\begin{lemma} \label{zero_persists_hard} \label{zero_remains_zero}
	Fix integers $0<e<d$, $\alpha \in \F_p^e$ and $\beta \in \F_p^{d - e}$. Then the image of $N^e_\alpha$ in $N^d_{(\alpha, \beta)}$ is zero if and only if $N^d_{(\alpha, \beta)} = 0$. If particular, if $N^e_\alpha = 0$. Then $N^{e+1}_{(\alpha,j)} = 0$ for all $0 \leq j < p$.
\end{lemma}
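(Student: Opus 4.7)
The ``only if'' direction is trivial. For the converse, my plan is to use the summand descriptions of $N^e_\alpha$ and $N^d_{(\alpha,\beta)}$ from Remark \ref{rmk-Ne-direct-sum} together with the explicit form of the transition maps from Remark \ref{Qe-to-Qe+1-rmk} (iterated), and then to extract strong enough ideal-containment information via the $R^{p^d}$-linearity of operators in $D^d_R$.

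Concretely, by Remark \ref{rmk-Ne-direct-sum} the module $N^e_\alpha$ is the direct sum of the summands $\frac{D^e_R \fa^{|\ul{a}|}}{D^e_R \fa^{|\ul{a}|+1}} \t{Q}^e_{\ul{a}}$ over $\ul{a} \in \{0,\ldots,p^e-1\}^r$ with $|\ul{a}|\equiv |\alpha| \pmod{p^e}$, and similarly $N^d_{(\alpha,\beta)}$ is the direct sum of $\frac{D^d_R \fa^{|\ul{b}|}}{D^d_R \fa^{|\ul{b}|+1}} \t{Q}^d_{\ul{b}}$ over $\ul{b} \in \{0,\ldots,p^d-1\}^r$ with $|\ul{b}| \equiv |\alpha|+p^e|\beta| \pmod{p^d}$. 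Iterating Remark \ref{Qe-to-Qe+1-rmk}, the induced map from the $\ul{a}$-summand to the $\ul{b}$-summand (where $\ul{b} = \ul{a} + p^e \ul{i}$ for the unique $\ul{i} \in \{0,\ldots,p^{d-e}-1\}^r$) is, up to a unit in $\F_p$, multiplication by $f^{p^e \ul{i}}$. Vanishing of the image of $N^e_\alpha$ in $N^d_{(\alpha,\beta)}$ is therefore equivalent to the family of containments $D^e_R \fa^{|\ul{a}|} f^{p^e \ul{i}} \sq D^d_R \fa^{|\ul{b}|+1}$, one for each $\ul{b}$ in the index set. Since $f^{\ul{a}} \in \fa^{|\ul{a}|}$, each such containment yields in particular the single-monomial statement $f^{\ul{b}} \in D^d_R \fa^{|\ul{b}|+1}$.

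The key step I will then carry out is to bootstrap this into $\fa^m \sq D^d_R \fa^{m+1}$ for every value $m = |\ul{b}|$ appearing in the index set, which immediately gives $D^d_R \fa^m = D^d_R \fa^{m+1}$ and hence vanishing of the corresponding summands of $N^d_{(\alpha,\beta)}$. Given a monomial $f^{\ul{n}}$ with $|\ul{n}|=m$, I will decompose $\ul{n} = \ul{n}^{(0)} + p^d \ul{n}^{(1)}$ with $\ul{n}^{(0)} \in \{0,\ldots,p^d-1\}^r$; then $|\ul{n}^{(0)}| \equiv m \pmod{p^d}$, so $\ul{n}^{(0)}$ itself sits in the index set and the previous paragraph gives $f^{\ul{n}^{(0)}} = \sum_k r_k \xi_k(h_k)$ with $\xi_k \in D^d_R$, $h_k \in \fa^{|\ul{n}^{(0)}|+1}$, $r_k \in R$. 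Since $f^{p^d \ul{n}^{(1)}} = (f^{\ul{n}^{(1)}})^{p^d} \in R^{p^d}$ can be absorbed through any $\xi_k \in \End_{R^{p^d}}(R)$, this will yield
\[
f^{\ul{n}} = f^{\ul{n}^{(0)}} \cdot f^{p^d \ul{n}^{(1)}} = \sum_k r_k \, \xi_k\bigl(h_k f^{p^d \ul{n}^{(1)}}\bigr) \in D^d_R \fa^{m+1},
\]
because $h_k f^{p^d \ul{n}^{(1)}} \in \fa^{|\ul{n}^{(0)}|+1} \fa^{p^d |\ul{n}^{(1)}|} = \fa^{m+1}$. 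The ``in particular'' assertion will then follow from the main statement applied with $d = e+1$.

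The main obstacle is precisely this bootstrap: the hypothesis directly supplies information only about monomials $f^{\ul{b}}$ whose exponent vector has all coordinates below $p^d$, whereas $\fa^m$ generally contains monomials whose coordinates can exceed $p^d$. The crucial idea is that any such excess can be written as a $p^d$-th power $f^{p^d\ul{n}^{(1)}}$, which is invisible to operators in $D^d_R$ because they are $R^{p^d}$-linear; this allows one to absorb the excess into the argument of each $\xi_k$ while keeping the total degree arbitrarily large.
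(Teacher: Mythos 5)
Your proof is correct and follows essentially the same route as the paper's: use the summand decomposition of Remark~\ref{rmk-Ne-direct-sum}, read off from Remark~\ref{Qe-to-Qe+1-rmk} that each component of the transition map is multiplication by a unit times a monomial in the $f_j$'s, extract the containments $f^{\ul{b}} \in D^d_R\cdot\fa^{|\ul{b}|+1}$ for all $\ul{b}$ in the index set, and finally upgrade these single-monomial statements to $\fa^{M} \subseteq D^d_R \cdot \fa^{M+1}$ by absorbing $p^d$-th powers through the $R^{p^d}$-linear operators. The one place where you are actually more careful than the paper is the last step: the paper decomposes $\ul{a} = \ul{a}_0 + p^e\ul{a}_1$ with $\ul{a}_1$ unbounded and then refers to a summand $\t{Q}^d_{\ul{a}}$, which strictly speaking is only defined for $\ul{a} \in \{0,\ldots,p^d-1\}^r$; the absorption of the overflow $f^{p^d\ul{a}_1''}$ into the argument of the $D^d_R$-operators (via $R^{p^d}$-linearity) is left implicit. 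Your explicit decomposition $\ul{n} = \ul{n}^{(0)} + p^d\ul{n}^{(1)}$ and the subsequent observation that $\xi_k(h_k)\cdot f^{p^d\ul{n}^{(1)}} = \xi_k(h_k f^{p^d\ul{n}^{(1)}})$ makes that step precise, at no extra cost. The derivation of the ``in particular'' clause (applying the equivalence with $d = e+1$) is also correct.
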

\begin{proof}
	The $(\Leftarrow)$ implication is clear. For $(\Rightarrow)$ let $n = |\alpha|$, $k = |\beta|$ and $m = |(\alpha, \beta)|$. Thus $m = n + p^e k$. By Theorem \ref{thm-summands-of-Ne} it is enough to show that for all $0 \leq t < r$ we have
	$$\fa^{m + tp^d} \sq D^d \cdot \fa^{m + t p^d + 1}.$$
	Let $\ul{a} \in \N_0^r$ be such that $|\ul{a}| = m + tp^d$ and we will show that $f^{\ul{a}} \in D^d \cdot \fa^{m + tp^d + 1}$. 
	
	Let $\ul{a_0}$ and $\ul{a_1}$ be the unique $r$-tuples of nonnegative integers with $\ul{a} = \ul{a_0} + p^e \ul{a_1}$ and $\ul{a_0} \in \{0, \ds, p^e -1\}^r$. Let $n' := |\ul{a_0}|$. Since by assumption the map
	$$\frac{D^e \cdot \fa^{n'}}{D^e \cdot \fa^{n' + 1}} \t{Q}^e_{\ul{a_0}} \xrightarrow{f^{\ul{a_1}p^e}} \frac{D^d \cdot \fa^{m + tp^d}}{D^d \cdot \fa^{m + tp^d + 1}} \t{Q}^d_{\ul{a}}$$
	is zero (c.f. Remark \ref{Qe-to-Qe+1-rmk}), the image of $f^{\ul{a_0}} \t{Q}^e_{\ul{a_0}}$ is zero. It follows that $f^{\ul{a}} \in D^d \cdot \fa^{m + tp^d + 1}$ as required.
\end{proof}
Recall from Section \ref{scn-Ap-and-modules} that if $\alpha \in \Z_p$ and $e> 0$ we denote $N^e_\alpha := N^e_{(\alpha_0, \ds, \alpha_{e-1})}$, and that these modules have a filtered $\cA_p$-module structure.
\begin{proposition} \label{when-alpha-is-root}
	Fix $\alpha \in \Z_p$ with $p$-adic expansion $\alpha = \alpha_0 + p \alpha_1 + p^2 \alpha_2 + \cds$. Then the following are equivalent.
	\begin{enumerate}[(1)]
		\item The $p$-adic number $\alpha$ is a Bernstein-Sato root of $\fa$.
		\item For all $e \geq 0$ we have $N^e_{(\alpha_0, \ds, \alpha_{e-1})} \neq 0$. 
		\item There is a sequence $e_n \nearrow \infty$ such that $N^{e_n}_{(\alpha_0, \alpha_1, \ds, \alpha_{e_n})} \neq 0$.
	\end{enumerate}
\end{proposition}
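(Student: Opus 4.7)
My plan is to establish the equivalences via the identification $(N_\fa)_\alpha \cong \lim_{\to e} N^e_{\alpha_{<e}}$ coming from Propositions \ref{N-is-discrete} and \ref{M-alpha-is-lim}, combined with the inheritance of vanishing provided by Lemma \ref{zero_remains_zero}. The equivalence of (2) and (3), as well as the implication (1) $\Rightarrow$ (2), are immediate: iterating Lemma \ref{zero_remains_zero} shows that if $N^{e_0}_{\alpha_{<e_0}} = 0$ for some $e_0$ then $N^e_{\alpha_{<e}} = 0$ for every $e \geq e_0$, so any violation of (2) propagates upward and kills the direct limit, while conversely any sequence $e_n \nearrow \infty$ with $N^{e_n}_{\alpha_{<e_n}} \neq 0$ already forces $N^e_{\alpha_{<e}} \neq 0$ for all $e$.

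The substantive direction is (2) $\Rightarrow$ (1), which I would prove by contradiction. Assuming (2) holds but $(N_\fa)_\alpha = 0$, I would use the finiteness of $BS(\fa)$ from Proposition \ref{N-is-discrete} to pick $e$ large enough that no Bernstein-Sato root $\beta$ satisfies $\beta_{<e} = \alpha_{<e}$. Then by discreteness of $N_\fa$, the simultaneous $\alpha_{<e}$-eigenspace for $\pi_0, \ldots, \pi_{e-1}$, being $\bigoplus_\beta (N_\fa)_\beta$ over roots $\beta$ with $\beta_{<e} = \alpha_{<e}$, vanishes. Consequently every $u \in N^e_{\alpha_{<e}}$ has zero image in $N_\fa$, so it maps to zero in some $N^d$; in particular its image under the filtered transition $N^e_{\alpha_{<e}} \to N^d_{\alpha_{<d}}$ vanishes.

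To close the argument I need to upgrade this pointwise vanishing to uniform vanishing, and for this I would exploit the finite generation of $N^e_{\alpha_{<e}}$ as an $R$-module: by Remark \ref{rmk-Ne-direct-sum} it is a finite direct sum of quotients $D^e_R \cdot \fa^{|\ul{a}|}/D^e_R \cdot \fa^{|\ul{a}|+1}$, each an $R$-module quotient of an ideal in the Noetherian ring $R$, and the transition maps of the filtered $\cA_p$-module structure are $R$-linear (in fact $D^e_R$-linear, by Remark \ref{Qe-to-Qe+1-rmk}). Choosing generators $u_1, \ldots, u_k$ of $N^e_{\alpha_{<e}}$ and letting $d$ be the maximum of the individual vanishing levels yields a $d$ for which the entire map $N^e_{\alpha_{<e}} \to N^d_{\alpha_{<d}}$ is zero. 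Lemma \ref{zero_persists_hard} then forces $N^d_{\alpha_{<d}} = 0$, contradicting (2). The main conceptual obstacle is precisely this ``pointwise $\Rightarrow$ uniform'' passage, which is why finite generation over $R$ must be invoked rather than a naive elementwise argument.
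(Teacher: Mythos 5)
Your proposal is correct and follows essentially the same path as the paper. Both identify $(N_\fa)_\alpha$ with $\lim_{\to e} N^e_{\alpha_{<e}}$ via Proposition \ref{M-alpha-is-lim}, both use Lemma \ref{zero_remains_zero} to get the easy propagation of vanishing (and hence (1) $\Rightarrow$ (2) $\Rightarrow$ (3)), and both handle the hard converse by exploiting finite generation of the modules $N^e$ over the Noetherian ring $R$ together with the $R$-linearity of the filtered transition maps, upgrading the pointwise vanishing of images in the limit to the vanishing of an entire transition map, after which Lemma \ref{zero_persists_hard} forces $N^d_{\alpha_{<d}} = 0$ and contradicts the hypothesis. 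The only cosmetic difference is that the paper argues (3) $\Rightarrow$ (1) and applies finite generation to $N^1$ directly, whereas you argue (2) $\Rightarrow$ (1) after first using the finiteness of $BS(\fa)$ to choose a separating $e$ and apply finite generation to $N^e$; the extra selection of a separating $e$ is not strictly needed once one invokes Proposition \ref{M-alpha-is-lim}, but it does no harm and the argument is sound.
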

\begin{proof}
	As $N$ is discrete it follows from Proposition \ref{M-alpha-is-lim} that $N_\alpha \cong \lim (N^e_\alpha)$. The fact that (1) implies (2) then follows from Lemma \ref{zero_remains_zero}: if $N^e_\alpha = 0$ for some $e$ then $N^{e'}_\alpha = 0$ for all $e' > e$. Statement (2) implies (3) trivially. For (3) implies (1) suppose for a contradiction that $\alpha$ is not a root; that is, $\lim N^e_\alpha = 0$ (c.f. Proposition \ref{M-alpha-is-lim}). As $N^1$ is a finitely generated $R$-module (in fact, so are all of the $N^e$) and the maps $N^e \to N^{e+1}$ are $R$-module homomorphisms it follows that for all $d \gg 0$ we have $\im(N^1_\alpha \to N^d_\alpha) = 0$. From Lemma \ref{zero_remains_zero} we see that $N^d_\alpha = 0$ for all $d \gg 0$, which contradicts (3).
\end{proof}
\begin{corollary} \label{cor-bs-roots-indep-gens}
	The set of Bernstein-Sato roots of $\fa$ is independent of the initial choice of generators. 
\end{corollary}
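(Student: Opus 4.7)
The plan is to assemble the corollary directly from two results that are already in place: Corollary \ref{cor-ae-indep-of-gens} (independence of the approximating polynomial) and Proposition \ref{when-alpha-is-root} (the characterization of Bernstein-Sato roots via the truncated eigenspaces $N^e_{\alpha_{<e}}$). Nothing new is needed beyond a simple reindexing argument.

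First I would observe that for each fixed $e > 0$, the set
\[
T_e := \{\alpha \in \F_p^e \mid N^e_\alpha \neq 0\}
\]
is intrinsic to $\fa$, i.e.\ does not depend on the chosen generators. Indeed, by definition the roots of $a^e_\fa(s)$ are exactly $\{|\alpha|/p^e : \alpha \in T_e\}$, and since $\alpha \mapsto |\alpha|$ is a bijection from $\F_p^e$ onto $\{0, 1, \dots, p^e - 1\}$, knowing the roots of $a^e_\fa(s)$ is equivalent to knowing $T_e$. By Corollary \ref{cor-ae-indep-of-gens}, $a^e_\fa(s)$ is independent of the choice of generators, and hence so is $T_e$.

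Next I would invoke Proposition \ref{when-alpha-is-root}: a $p$-adic integer $\alpha = \alpha_0 + p\alpha_1 + \cds$ is a Bernstein-Sato root of $\fa$ if and only if $N^e_{(\alpha_0, \ldots, \alpha_{e-1})} \neq 0$ for all $e > 0$. In the notation above, this says
\[
BS(\fa) = \{\alpha \in \Z_p \mid \alpha_{<e} \in T_e \text{ for all } e > 0\}.
\]
Since each $T_e$ is intrinsic to $\fa$, the right-hand side depends only on $\fa$ and not on the presentation by generators. This yields the corollary.

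There is no real obstacle here, as the substantive content was already extracted in Corollary \ref{cor-ae-indep-of-gens} (which rests on Theorem \ref{thm-roots-of-be} identifying the roots of $a^e_\fa(s)$ with the $\nu$-invariants $\nu^\bullet_\fa(p^e)$, a manifestly generator-independent collection) and in Proposition \ref{when-alpha-is-root} (which passes from the filtered data $(N^e)$ to the limit $N_\fa$). The present corollary is simply the conjunction of these two facts.
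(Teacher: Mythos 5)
Your argument is correct and matches the paper's intended derivation: the corollary is stated immediately after Proposition \ref{when-alpha-is-root} and is meant to follow by combining that characterization (a $p$-adic integer $\alpha$ is a Bernstein-Sato root iff $N^e_{\alpha_{<e}} \neq 0$ for all $e$) with Corollary \ref{cor-ae-indep-of-gens}, exactly as you do. Your observation that the bijection $\alpha \mapsto |\alpha|$ lets one recover $T_e$ from the root set of $a^e_\fa(s)$ is the right way to make the reduction precise.
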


\pagebreak
\begin{lemma} \label{root-predynamics}
	Let $e_0$ be a stable exponent, $e > 0$ and $\gamma \in \F_p^e$. Then the following are equivalent:
	\begin{enumerate}[(1)]
		\item For all $\beta \in \F_p^{e_0}$ we have $N^{e_0 + e}_{(\beta, \gamma)} = 0$.
		\item For all $\beta \in \F_p^{e_0}$ and $j \in \F_p$ we have $N^{e_0 + 1 + e}_{(\beta, j, \gamma)} = 0$. 
		\item For all $\beta' \in \F_p^{e_0 + 1}$ we have $N^{e_0 + 1 + e }_{(\beta', \gamma)} = 0$.
	\end{enumerate}
\end{lemma}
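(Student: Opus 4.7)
The plan is essentially to reduce everything to Lemma \ref{N-to-testideal}, exploiting the fact that if $e_0$ is a stable exponent then so is $e_0 + 1$. First, I would observe that (2) and (3) are the same statement written in two ways: a pair $(\beta, j)$ with $\beta \in \F_p^{e_0}$ and $j \in \F_p$ is precisely an element $\beta' \in \F_p^{e_0 + 1}$, and the tuples $(\beta, j, \gamma)$ and $(\beta', \gamma)$ then coincide. So the equivalence (2) $\Leftrightarrow$ (3) requires no argument beyond unpacking notation.

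Next, I would verify the trivial remark that $e_0 + 1$ is a stable exponent whenever $e_0$ is. This is immediate from Definition \ref{def-stable-exp} (or from the observation, made just after the definition, that any integer exceeding a stable exponent is itself stable).

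The heart of the proof is the equivalence (1) $\Leftrightarrow$ (3), which I would obtain by applying Lemma \ref{N-to-testideal} twice with the \emph{same} $\gamma \in \F_p^e$: once using the stable exponent $e_0$, and once using the stable exponent $e_0 + 1$. Statement (1) is exactly condition (1) of Lemma \ref{N-to-testideal} for stable exponent $e_0$, and statement (3) is exactly condition (1) of Lemma \ref{N-to-testideal} for stable exponent $e_0 + 1$. Both are therefore equivalent to condition (3') of Lemma \ref{N-to-testideal} for $\gamma$, namely the assertion that the set
\[
\Big(\tfrac{|\gamma|}{p^e}, \tfrac{|\gamma| + 1}{p^e}\Big] + \N_0
\]
contains no $F$-jumping numbers of $\fa$. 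Crucially, this condition depends only on $\gamma$ and not on the particular stable exponent used, so the two invocations of Lemma \ref{N-to-testideal} land on the same condition, and (1) $\Leftrightarrow$ (3) follows.

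There is no genuine obstacle here: the work was already done in Lemma \ref{N-to-testideal}, and the present lemma just exploits the fact that the $F$-jumping-number criterion is insensitive to the choice of stable exponent. The only minor point worth stating explicitly in the write-up is the stability of $e_0 + 1$, which is what allows the same criterion to apply at both levels.
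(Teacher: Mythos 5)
Your proof is correct, and it takes a slightly different (and arguably cleaner) route than the paper. The paper also reduces everything to Lemma~\ref{N-to-testideal}, but it applies that lemma at the \emph{single} stable exponent $e_0$ to both statement (1) (with vector $\gamma$) and statement (2) (with each vector $(j,\gamma) \in \F_p^{e+1}$), and then establishes the equivalence by verifying the interval decomposition
\[
\Big(\tfrac{|\gamma|}{p^e}, \tfrac{|\gamma|+1}{p^e}\Big] + \N \;=\; \bigcup_{j=0}^{p-1} \Big(\tfrac{j + p|\gamma|}{p^{e+1}}, \tfrac{j + p|\gamma|+1}{p^{e+1}}\Big] + \N .
\]
You instead apply Lemma~\ref{N-to-testideal} at the two stable exponents $e_0$ and $e_0+1$, both with the \emph{same} tail $\gamma \in \F_p^e$, so that the two invocations land directly on the identical $F$-jumping-number criterion and no interval computation is needed. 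Your approach therefore trades the interval-splitting identity for the (entirely trivial) observation that $e_0 + 1$ is again a stable exponent, and it bypasses statement (2) altogether rather than passing through it. Both arguments are correct and rest on the same key lemma; yours makes more transparent the fact that the criterion in Lemma~\ref{N-to-testideal} depends only on $\gamma$ and not on the chosen stable exponent.
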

\begin{proof}
	It follows from Lemma \ref{N-to-testideal} that statement (1) is equivalent to the fact that 
	$$S_e := \bigg(\frac{|\gamma|}{p^e}, \frac{|\gamma|+1}{p^e}\bigg] + \N$$
	contains no F-jumping numbers of $\fa$. Another application of Lemma \ref{N-to-testideal} gives that statement (2) is equivalent to the fact that for all $0 \leq j < p$ the set
	$$S_{e+1, j} := \bigg(\frac{j + p|\gamma|}{p^{e+1}}, \frac{j + p|\gamma|+1}{p^{e+1}}\bigg] + \N$$
	contains no F-jumping numbers of $\fa$. As we have $S_e = \bigcup_{j = 0}^{p-1} S_{e+1,j}$ the equivalence between (1) and (2) follows. Finally, (2) is trivially equivalent to (3).
\end{proof}
\subsection{Rationality and negativity of the Bernstein-Sato roots}
In this subsection we prove the following statement. Recall we say a $p$-adic number is rational if it lies in $\Z_{(p)}$.
\begin{theorem} \label{BSroots-are-rational}
	The Bernstein-Sato roots of $\fa$ are negative rational numbers.
\end{theorem}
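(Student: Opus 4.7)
The plan is to prove rationality via a dynamical/finiteness argument based on Lemma \ref{root-predynamics}, and then to establish negativity by identifying $\alpha$ with the negative of an $F$-jumping number of $\fa$.

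For rationality, fix a stable exponent $e_0$ and let $\t\alpha := (\alpha_{e_0}, \alpha_{e_0 + 1}, \ldots) \in \Z_p$ be the tail of $\alpha$ past $e_0$. Consider the left-shift $\sigma(\gamma) := (\gamma - \gamma_0)/p$ on $\Z_p$. Set $R_e := \{\gamma \in \F_p^e : \exists\, \beta \in \F_p^{e_0}, \, N^{e_0+e}_{(\beta,\gamma)} \neq 0\}$; the equivalence $(1) \Leftrightarrow (3)$ of Lemma \ref{root-predynamics} shows that $R_e$ is independent of the stable exponent chosen, so $\sigma$ restricts to a self-map of $BS^\infty := \{\gamma \in \Z_p : \gamma_{<e} \in R_e \text{ for all } e\}$, and $\t\alpha$ lies in $BS^\infty$. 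Since $|R_e|$ is uniformly bounded in $e$ (by the same argument as in the proof of Proposition \ref{N-is-discrete}), $BS^\infty$ is finite, hence the forward orbit $\{\sigma^n(\t\alpha)\}_{n \geq 0}$ is finite. Therefore $\t\alpha$ is preperiodic under $\sigma$, equivalently the $p$-adic digits of $\alpha$ are eventually periodic, so $\alpha \in \Z_{(p)} \subseteq \Q$.

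For negativity, write $\alpha = c/d \in \Q$ in lowest terms with $d > 0$ and $\gcd(d,p) = 1$. By Proposition \ref{when-alpha-is-root}, Theorem \ref{thm-summands-of-Ne}, and Proposition \ref{nu-invt-trun-ti-prop}, for each $e > 0$ there is $s_e \in \{0, \ldots, r-1\}$ such that $\nu_e := |\alpha_{<e}| + s_e p^e$ is a $\nu$-invariant of $\fa$ at level $e$. By passing to a subsequence we may take $s_e = s$ constant and $\nu_e/p^e \to \lambda$ in $\R$ for an $F$-jumping number $\lambda$ of $\fa$. Restricting further to $e$ in the arithmetic progression satisfying $p^e \equiv 1 \pmod d$, an explicit computation of the canonical $p$-adic representative of $c/d$ yields $|\alpha_{<e}| = (c + mp^e)/d$ with $m$ a positive integer constant along the progression and satisfying $m \equiv -c \pmod d$; hence $\lambda = m/d + s$. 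Upgrading the $\R$-limit $\nu_e/p^e \to \lambda$ to the $p$-adic equality $\alpha = -\lambda$ in $\Q$ via the sharp asymptotic $\nu^J(p^e) = \lfloor p^e \lambda \rfloor$ (valid for $\lambda \in \Z_{(p)}$ and $e$ large in the progression) then forces $c = -(m + sd)$; since $\lambda > 0$, we conclude $\alpha = -\lambda < 0$.

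The main obstacle will be this last upgrade, from the $\R$-convergence $\nu_e/p^e \to \lambda$ to the exact $p$-adic identity $\alpha = -\lambda$ in $\Q$: real convergence alone only yields $\alpha + \lambda \in \Z$, and ruling out a nonzero integer shift requires sharper $p$-adic control of the $\nu$-invariants along the chosen arithmetic progression. An alternative route is to first prove Theorem \ref{thm-bsroots-and-fjn}(c), which identifies $BS(\fa) + \Z$ with $-FJ(\fa) \cap \Z_{(p)} + \Z$, and then to isolate the negative representative within each $\Z$-coset using the Bernstein-Sato root conditions at small $e$.
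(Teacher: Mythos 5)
Your rationality argument is correct and is essentially the paper's: both proofs exploit the digit-shift dynamics arising from Lemma \ref{root-predynamics} together with the boundedness estimate from Proposition \ref{N-is-discrete} to land in a finite set and conclude eventual periodicity. The paper phrases this via Lemma \ref{root-dynamics} (closure of $\widetilde{BS}(\fa) \subseteq \Z_p/\Z$ under $\alpha + \Z \mapsto (\alpha - \alpha_0)/p + \Z$), while you work with the tails $R_e$ and $BS^\infty$; these are mild repackagings of the same idea and your version of the implication chain (using $(1)\Leftrightarrow(3)$ of Lemma \ref{root-predynamics} for independence from the stable exponent, and $(1)\Rightarrow(2)$ for $\sigma$-invariance of $BS^\infty$) checks out.

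The negativity part has a genuine gap, and you have correctly located it. From $\nu_e = |\alpha_{<e}| + s p^e \in \nu^\bullet_\fa(p^e)$ with constant $s$, you get the $p$-adic congruences $\nu_e \equiv \alpha \pmod{p^e}$ and (after a further subsequence) the archimedean limit $\nu_e/p^e \to \lambda$. These two facts together only determine $\alpha$ modulo $\Z$: if $\alpha = -\lambda + n$ for any integer $n$, one gets the same archimedean limit with a shifted $s$ (for concreteness, with $p=3$ both $\alpha = -1/2$ with $s=0$ and $\alpha = -3/2$ with $s=1$ produce $\nu_e/3^e \to 1/2$). In particular nothing here rules out $\alpha = -\lambda + n$ with $n$ large enough that $\alpha > 0$. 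Your proposed fix via a ``sharp asymptotic'' $\nu^J_\fa(p^e) = \lfloor p^e\lambda\rfloor$ is not proved, is not cited from the paper, and silently assumes the witnessing ideal $J$ for the $\nu$-invariant $\nu_e$ is constant along the subsequence, which has not been arranged. The proposed alternative via Theorem \ref{thm-bsroots-and-fjn} only delivers $BS(\fa) + \Z = -FJ(\fa)\cap\Z_{(p)} + \Z$, which is again an identification modulo $\Z$ and does not by itself select the negative coset representative.

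The paper's own negativity argument is structurally different: it is a proof by contradiction. Writing $\alpha = m - p^d\lambda$ with $0 \leq m < p^d$ and $\lambda = -\gamma \in [0,1]$ having a purely periodic expansion, it assumes $m \geq p^d\lambda$ (equivalently $\alpha \geq 0$) and observes that this makes $K_{e,s}/p^{(e+1)d}$ a \emph{decreasing} sequence converging to $\lambda + s$. This monotonicity is exactly what is needed to invoke Corollary \ref{cor-eiri-sequence} and the stable exponent (Lemma \ref{stable-pe}) to pin down $\cC^{(e+1)d}_R \cdot \fa^{K_{e,s}}$ and force $\cC^{(e+1)d}_R\cdot \fa^{K_{e,s}} = \cC^{(e+1)d}_R \cdot \fa^{K_{e,s}+1}$ for all $s \in \{0,\ldots,r-1\}$, contradicting $N^{(e+1)d}_\alpha \neq 0$ via Theorem \ref{thm-summands-of-Ne}. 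The decisive idea you are missing is this monotone-convergence mechanism: the sign of $m - p^d\lambda$ (equivalently, the sign of $\alpha$) controls whether the approximating exponents approach $\lambda + s$ from above, and approaching from above is precisely the hypothesis under which the test ideals stabilize (Corollary \ref{cor-eiri-sequence}). Your proposal as written does not reach a complete proof of negativity.
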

The analogous result in characteristic zero was proven by Kashiwara in \cite{Kas76} by using resolution of singularities. 

A posteriori (c.f. Theorem \ref{thm-bsroots-and-fjn}) we see that the rationality of Bernstein-Sato roots is analogous to the rationality of the F-jumping numbers. In \cite{BMSm2008} one sees that rationality of F-jumping numbers follows from their discreteness together with the fact that if $\lambda$ is an F-jumping number for $\fa$ then so is $p \lambda$. The following lemma is the analogue of this latter property for Bernstein-Sato roots.
\begin{lemma} \label{root-dynamics}
	Let $\alpha \in \Z_p$. If $p \alpha + \Z$ contains a Bernstein-Sato root then so does $\alpha + \Z$.
\end{lemma}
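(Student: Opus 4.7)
Write $\beta=p\alpha+n$ with $n\in\Z$ and fix a stable exponent $e_0$. The plan is to exploit that $\beta=p\alpha+n$ in $\Z_p$ aligns the tail of $\beta$ beyond position $e_0$ with the tail of $\alpha$ beyond position $e_0-1$, up to an integer shift absorbable into the first $e_0$ digits. Concretely, I will construct a Bernstein-Sato root $\alpha'$ of the form $\alpha':=\eta+p^{e_0}\bar\gamma$, where $\eta\in\F_p^{e_0}$ is to be chosen and $\bar\gamma\in\Z_p$ is the $p$-adic integer with digits $\beta_{e_0+1},\beta_{e_0+2},\ldots$, i.e.\ $\bar\gamma:=(\beta-B_{e_0+1})/p^{e_0+1}$ where $B_{e_0+1}:=\sum_{i\leq e_0}\beta_ip^i$. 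Since $\beta_0\equiv n\pmod{p}$ we have $p\mid(n-B_{e_0+1})$, and so $\alpha'-\alpha=\eta+(n-B_{e_0+1})/p\in\Z$; that is, $\alpha'\in\alpha+\Z$ automatically regardless of the choice of $\eta$.

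To pick $\eta$, first use Proposition~\ref{when-alpha-is-root} to restate the BS-root property of $\beta$ as $N^{e_0+1+e}_{(\beta_{<e_0+1},\gamma^{(e)})}\neq 0$ for every $e\geq 0$, where $\gamma^{(e)}:=(\beta_{e_0+1},\ldots,\beta_{e_0+e})\in\F_p^e$ is the truncation of $\bar\gamma$. For $e\geq 1$ this negates condition~(3) of Lemma~\ref{root-predynamics}, whose equivalence with~(1) produces some $\eta^{(e)}\in\F_p^{e_0}$ with $N^{e_0+e}_{(\eta^{(e)},\gamma^{(e)})}\neq 0$; the case $e=0$ follows from backward truncation (Lemma~\ref{zero_persists_hard}) applied to $e=1$.

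Since $\F_p^{e_0}$ is finite, pigeonhole yields a single $\eta\in\F_p^{e_0}$ with $\eta^{(e)}=\eta$ for infinitely many $e$. For any fixed level $d\geq 0$, choose such an $e\geq d$ and apply Lemma~\ref{zero_persists_hard} to truncate $(\eta,\gamma^{(e)})$ to its first $e_0+d$ entries; this yields $N^{e_0+d}_{(\eta,\gamma^{(d)})}\neq 0$. By construction $\alpha'_{<e_0+d}=(\eta,\gamma^{(d)})$, so $N^{e_0+d}_{\alpha'_{<e_0+d}}\neq 0$ for all $d\geq 0$; for shorter truncations ($d'<e_0$, where $\alpha'_{<d'}=\eta_{<d'}$) a further application of Lemma~\ref{zero_persists_hard} to the $d=0$ case suffices. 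Proposition~\ref{when-alpha-is-root} then certifies $\alpha'\in BS(\fa)$, so $\alpha+\Z$ contains a Bernstein-Sato root.

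The main obstacle is that being a Bernstein-Sato root demands a \emph{coherent} infinite sequence of nonvanishing truncations, whereas Lemma~\ref{root-predynamics} alone only provides, for each level separately, \emph{some} $\eta^{(e)}$ that works. The pigeonhole together with the backward-propagation Lemma~\ref{zero_persists_hard} is what upgrades this level-by-level existence to the single consistent $\eta$ needed to assemble $\alpha'$ as a $p$-adic integer; the shift identity $\alpha'-\alpha=\eta+(n-B_{e_0+1})/p$ is then essentially a bookkeeping step.
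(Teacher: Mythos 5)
Your proof is correct, and it invokes the same three ingredients as the paper---Proposition~\ref{when-alpha-is-root}, Lemma~\ref{root-predynamics}, and the finiteness of $\F_p^{e_0}$---but it runs in the opposite logical direction. The paper proves the contrapositive: assuming $\alpha+\Z$ contains no roots, it uses Proposition~\ref{when-alpha-is-root} plus the forward persistence of vanishing (Lemma~\ref{zero_remains_zero}) to find a \emph{single} level $e$ at which $N^{e_0+e}_{(\beta,\alpha_{e_0},\ldots,\alpha_{e_0+e-1})}=0$ for \emph{every} $\beta\in\F_p^{e_0}$, then feeds this into the $(1)\Rightarrow(3)$ direction of Lemma~\ref{root-predynamics} and reads off that $j+p\alpha$ is not a root. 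You instead start from a root $\beta\in p\alpha+\Z$ and use the $(3)\Rightarrow(1)$ (negated) direction of Lemma~\ref{root-predynamics} to produce, level by level, some prefix $\eta^{(e)}\in\F_p^{e_0}$ with a nonvanishing truncation; since a nonzero truncation does not automatically propagate forward, you need the pigeonhole to extract a single $\eta$ and the backward direction of Lemma~\ref{zero_persists_hard} to certify all shorter truncations. The tradeoff is clear: the contrapositive is slightly slicker because ``zero'' is monotone under increasing the level, which lets the paper choose one uniform $e$ with no compactness argument, whereas your direct proof requires the pigeonhole/truncation machinery to manufacture a coherent infinite digit string. Your explicit verification that $\alpha'-\alpha=\eta+(n-B_{e_0+1})/p\in\Z$ is correct and is a step the paper avoids by design. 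The appeal to the $e=0$ case could be skipped, since condition~(3) of Proposition~\ref{when-alpha-is-root} only needs nonvanishing along a sequence of levels tending to infinity, but including it does no harm.
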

\begin{proof}
	Let us suppose that $\alpha + \Z$ contains no roots and we will show that $p \alpha + \Z$ contains no roots. Thus let $j \in \Z$ and let us show that $p \alpha + j$ is not a root. First we change $\alpha$ if necessary to assume that $0 \leq j < p$.
	
	Let $\beta \in \F_p^{e_0}$. By assumption we have that $|\beta| - \sum_{i = 0}^{e_0 - 1} \alpha_i p^i + \alpha$
	is not a root and, by Proposition \ref{when-alpha-is-root}, we can find $e>0$ large enough so that
	$$N^{e_0 + e}_{(\beta_0, \ds, \beta_{e_0 - 1}, \alpha_{e_0}, \ds, \alpha_{e_0 + e -1 })} = 0.$$ 
	As the set $\F_p^{e_0}$ is finite, this $e$ may be chosen independently of $\beta$. By Lemma \ref{root-predynamics} we have
	$$N^{e_0 + e+ 1}_{(\beta', \alpha_{e_0}, \ds, \alpha_{e_0 + e - 1})} = 0$$
	for all $\beta' \in \F_p^{e_0 + 1}$. This holds in particular for $\beta'=(j,\alpha_0, \ds, \alpha_{e_0 - 1})$ and again by Proposition \ref{when-alpha-is-root} we conclude that $j + p\alpha$ is not a root.
\end{proof}
We will also need the following result from \cite{BMSm2008}.
\begin{proposition}[{\cite[Prop. 2.14]{BMSm2008}}]
	Given $\fa \sq R$ as above, and $\lambda \in \R_{>0}$, there exists some $\epsilon > 0$ such that whenever $\lambda < r/p^e < \lambda + \epsilon$ we have $\tau(\fa^\lambda) = \cC^e_R \cdot \fa^r$.	
	
\end{proposition}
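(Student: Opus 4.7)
My plan is to combine the discreteness of $F$-jumping numbers with the Noetherian stabilization of the test ideal's defining union, shrinking $\epsilon$ in stages so that only pairs $(r,e)$ with $e$ sufficiently large can lie in the interval $(\lambda, \lambda + \epsilon)$.

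First, by the discreteness of $F$-jumping numbers I would choose $\epsilon_1 > 0$ so that $(\lambda, \lambda + \epsilon_1]$ contains no $F$-jumping number of $\fa$. Then $\tau(\fa^\mu) = \tau(\fa^\lambda)$ for every $\mu \in [\lambda, \lambda + \epsilon_1]$, so the problem reduces to proving $\tau(\fa^{r/p^e}) = \cC^e_R \cdot \fa^r$ whenever $r/p^e$ lies in the interval. The inclusion $\cC^e_R \cdot \fa^r \sq \tau(\fa^{r/p^e})$ is immediate from the definition $\tau(\fa^\mu) = \bigcup_{d} \cC^d_R \cdot \fa^{\lceil \mu p^d \rceil}$ by picking out the $d = e$ term.

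For the reverse inclusion I would exploit the Noetherian stabilization of the defining union: pick $E$ with $\tau(\fa^\lambda) = \cC^E_R \cdot \fa^{\lceil \lambda p^E \rceil}$, and further shrink $\epsilon$ to some $\epsilon_2 \leq \epsilon_1$ so that (a) $\lceil \mu p^E \rceil = \lceil \lambda p^E \rceil$ for every $\mu \in (\lambda, \lambda + \epsilon_2)$, and (b) no $r/p^e$ with $e < E$ lies in $(\lambda, \lambda + \epsilon_2)$. Condition (b) can be arranged because, for each fixed $e < E$, the gap between $\lambda$ and the next value of the form $r/p^e$ strictly above $\lambda$ is positive, so one may take the finite minimum of these gaps over $e < E$. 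Then for any $(r, e)$ with $r/p^e \in (\lambda, \lambda + \epsilon_2)$ we have $e \geq E$, and the increasing-union property yields $\cC^E_R \cdot \fa^{\lceil \mu p^E \rceil} \sq \cC^e_R \cdot \fa^{\lceil \mu p^e \rceil} = \cC^e_R \cdot \fa^r$. Combined with condition (a) this gives $\tau(\fa^\lambda) = \cC^E_R \cdot \fa^{\lceil \lambda p^E \rceil} = \cC^E_R \cdot \fa^{\lceil \mu p^E \rceil} \sq \cC^e_R \cdot \fa^r$, completing the equality.

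The main technical obstacle is the case $\lambda \in \Z[1/p]$: then $\lambda p^E$ is an integer for every sufficiently large $E$, so condition (a) cannot be arranged directly as $\lceil \mu p^E \rceil > \lceil \lambda p^E \rceil$ whenever $\mu > \lambda$. In this regime a different route is needed, and the natural tool is Skoda's theorem (Proposition \ref{brsk-prop}): writing $\cC^e_R \cdot \fa^r = \fa^k \cC^e_R \cdot \fa^{r - kp^e}$ whenever $r/p^e - k$ is at least the number of generators of $\fa$, one reduces to computing $\cC^e_R \cdot \fa^{r - kp^e}$ for some $k$ large enough that $(r - kp^e)/p^e$ drops below the log-canonical threshold, where the test ideal is the unit ideal; one then checks directly that $1 \in \cC^e_R \cdot \fa^{r - kp^e}$ by exhibiting a Cartier operator that picks out a suitable basis monomial, which is possible once $p^e$ is large enough relative to the number of generators.
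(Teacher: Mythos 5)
The paper cites this as \cite[Prop.~2.14]{BMSm2008} and does not reprove it, so there is no in-paper argument to compare against; I am only assessing correctness.

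Your main argument is correct on the large set of $\lambda$'s for which it applies. The increasing-chain containment $\cC^E_R \cdot \fa^{\lceil \mu p^E\rceil} \subseteq \cC^e_R \cdot \fa^{\lceil \mu p^e\rceil}$ for $e \geq E$ is right, and combined with right-continuity and the stabilization $\tau(\fa^\lambda) = \cC^E_R\cdot\fa^{\lceil\lambda p^E\rceil}$ it cleanly gives both inclusions whenever you can arrange $\lceil \mu p^E\rceil = \lceil \lambda p^E\rceil$ on a right neighborhood of $\lambda$. You also correctly identify that this fails exactly when $\lambda p^E \in \Z$, i.e.\ when $\lambda \in \Z[1/p]$ — and since for any $E'\geq E$ the stabilization still holds, the only way to dodge this is to find arbitrarily large $E$ with $\lambda p^E \notin \Z$, which is impossible precisely when $\lambda \in \Z[1/p]$.

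The gap is that the case $\lambda \in \Z[1/p]$ is not actually handled, and this case cannot be ignored: it includes $\lambda \in \Z$, which arises in the paper's application (Corollary~\ref{cor-eiri-sequence} is invoked in the proof of Theorem~\ref{BSroots-are-rational} with $\lambda + s$ for integer $s$, and more generally with rational $\lambda$). Your proposed Skoda-based fix does not close the gap for at least two reasons. First, Skoda in the form of Proposition~\ref{brsk-prop} only applies while the exponent stays at least $r\cdot p^e$ where $r$ is the number of generators, so the reduction $\cC^e_R \cdot \fa^{r} = \fa^k\,\cC^e_R \cdot \fa^{r-kp^e}$ can only lower $(r-kp^e)/p^e$ down to about the number of generators; this is typically well \emph{above} the $F$-pure threshold (e.g.\ already for a single generator $f$ with $\fpt(f)<1$ the reduction stops at exponent $1$ and never reaches the region where the test ideal is trivial). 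Second, even granting a reduction to an exponent with $(r-kp^e)/p^e < \fpt(\fa)$, it does not follow for the \emph{fixed} $e$ at hand that $\cC^e_R\cdot\fa^{r-kp^e} = R$: the $F$-pure threshold is a limit of $\nu$-invariants over $e$, and the inequality $r - kp^e \leq \nu^{\fm}_\fa(p^e)$ for every maximal ideal $\fm$ is exactly the kind of uniform statement one needs to prove, not something that can be read off by ``exhibiting a Cartier operator picking out a basis monomial'' from the number of generators and $p^e$ alone. So as written, this step is a genuine missing idea rather than a detail; a correct treatment of the $\Z[1/p]$ case requires a different argument (for instance, a uniform stabilization statement showing $\tau(\fa^\mu) = \cC^e_R\cdot\fa^{\lceil\mu p^e\rceil}$ for all $\mu$ in a bounded interval and all $e\geq E$), which is essentially the content of the auxiliary results in \cite{BMSm2008}.
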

\begin{corollary} \label{cor-eiri-sequence}
	Let $(e_i)$ and $(r_i)$ be sequences of nonnegative integers such that $e_i/r_i \searrow \lambda$. Then for all $i \gg 0$ we have $\tau(\fa^{\lambda}) = \cC^{e_i}_R \cdot \fa^{r_i}$. 
\end{corollary}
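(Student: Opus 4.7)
The plan is to apply the preceding Proposition (Prop.~2.14 from \cite{BMSm2008}) essentially verbatim; the corollary amounts to packaging it for sequences. With the natural reading that $r_i/p^{e_i} \searrow \lambda$, I would first invoke the Proposition to produce some $\epsilon > 0$ such that the equality $\tau(\fa^\lambda) = \cC^e_R \cdot \fa^r$ holds whenever $\lambda < r/p^e < \lambda + \epsilon$. Since the sequence $r_i/p^{e_i}$ decreases to $\lambda$, there is an $i_0$ so that for all $i \geq i_0$ we have $r_i/p^{e_i} < \lambda + \epsilon$, and monotone decrease to $\lambda$ also forces $r_i/p^{e_i} \geq \lambda$.

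For all indices $i \geq i_0$ at which the inequality on the left is strict, i.e.\ $\lambda < r_i/p^{e_i}$, the Proposition applies directly and yields $\tau(\fa^\lambda) = \cC^{e_i}_R \cdot \fa^{r_i}$. This handles the generic situation.

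The one bookkeeping point is the boundary case in which $r_i/p^{e_i} = \lambda$ exactly, which is excluded by the hypothesis of the cited Proposition. In this case $\lambda$ is a rational of the form $r/p^e$, and the monotone-decrease-to-$\lambda$ assumption forces $e_i \to \infty$ (otherwise $r_i/p^{e_i}$ could only take finitely many values $\geq \lambda$ close to $\lambda$). I would then use the remark made immediately after Definition~2.3 that $\tau(\fa^\lambda) = \cC^d_R \cdot \fa^{\lceil \lambda p^d \rceil}$ stabilizes for $d$ large (by noetherianity of $R$); combined with $\lceil \lambda p^{e_i}\rceil = r_i$ in this edge case, we still conclude $\tau(\fa^\lambda) = \cC^{e_i}_R \cdot \fa^{r_i}$ for $i \gg 0$.

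I do not expect any genuine obstacle: the preceding Proposition is precisely the numerical approximation statement needed, and the corollary is a sequential reformulation. The only delicate point is whether $\searrow$ allows equality with $\lambda$ along the sequence, but in either interpretation the argument is essentially the same and the edge case is routine.
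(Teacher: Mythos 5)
Your correction of the typo ($e_i/r_i$ should be $r_i/p^{e_i}$) is the right reading, and the main case is handled correctly: once the proposition's $\epsilon$ is fixed, monotone convergence forces $\lambda < r_i/p^{e_i} < \lambda + \epsilon$ for $i \gg 0$ (assuming strict inequality on the left), and the proposition applies directly. The paper gives no explicit proof of this corollary, treating it as an immediate consequence, and your argument is the intended one.

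The edge case is where your argument has a genuine flaw. You claim that if $r_i/p^{e_i}$ equals $\lambda$ along the tail, then ``the monotone-decrease-to-$\lambda$ assumption forces $e_i \to \infty$.'' This is false under the non-strict reading of $\searrow$: the constant sequence $r_i = \lambda p^{e}$, $e_i = e$ (with $e$ fixed so that $\lambda p^e \in \N$) decreases to $\lambda$ yet has $e_i$ bounded, and in that situation $\cC^{e}_R \cdot \fa^{\lambda p^e}$ is generally a \emph{proper} subideal of $\tau(\fa^\lambda) = \bigcup_d \cC^d_R \cdot \fa^{\lceil \lambda p^d\rceil}$ when $e$ is small, so the stated conclusion can actually fail. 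Your parenthetical justification (``could only take finitely many values'') does not yield a contradiction for a constant sequence. The cleanest fix is to read $\searrow$ as \emph{strict} decrease, i.e.\ $\lambda < r_i/p^{e_i}$ for all $i$; this is what happens in the paper's sole application (in the proof of Theorem~\ref{BSroots-are-rational}, where the exponents $(e+1)d$ do tend to infinity and $m - p^d\lambda \geq 0$, so one either has strict inequality or should handle $m = p^d\lambda$ separately via Lemma~\ref{stable-pe}). Under strict decrease, boundedness of $e_i$ would confine $r_i/p^{e_i}$ to the discrete set $\frac{1}{p^E}\Z$ and a strictly decreasing sequence there cannot converge to $\lambda$ from above, so $e_i \to \infty$ does follow --- but that is a different argument than the one you gave.
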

We are now ready to prove the theorem. We will use results from the appendix (Section \ref{subsn-p-adic-exp}).
\begin{proof} [Proof of Theorem \ref{BSroots-are-rational}]
	Given $\alpha \in \Z_p$ we denote by $\alpha^{(n)}$ the new $p$-adic number $\alpha^{(n)} := \alpha_n + p \alpha_{n+1} + p^2 \alpha_{n+2} + \cds$. Observe that $\alpha \equiv p^n \alpha^{(n)} \mod \Z$ for all $\alpha \in \Z_p$. Let $\t{BS}(\fa)$ be the image of $BS(\fa)$ in $\Z_p/\Z$. That is, $\t{BS}(\fa) := \{\alpha + \Z : \alpha \in BS(\fa)\} \sq \Z_p/\Z.$
	
	From Lemma \ref{root-dynamics} it follows that $\t{BS}(\fa)$ is closed under the operation $[\alpha + \Z \mapsto \alpha^{(1)} + \Z]$. As $BS(\fa)$ is finite by Proposition \ref{N-is-discrete}, so is $\t{BS}(\fa)$.
	
	It follows that if $\alpha$ is a Bernstein-Sato root then there exist positive integers $n < m$ such that $\alpha^{(n)} + \Z = \alpha^{(m)} + \Z$. We then have
	$$\alpha \equiv p^m \alpha^{(m)} \equiv p^m \alpha^{(n)} \equiv p^{m-n} \alpha \text{ mod } \Z.$$
	
	Therefore there exists some $c \in \Z$ such that $\alpha = p^{m-n} \alpha + c$ and thus $\alpha = c/(p^{m-n} - 1) \in \Z_{(p)}$, which shows that $\alpha$ is rational.
	
	Let us now show that $\alpha$ is negative. As $\alpha \in \Z_{(p)}$ by we may write $\alpha = m + p^d \gamma$ where $m \in \{0,1, \ds, p^d -1\}$ and $\gamma \in \Z_{(p)}$ with $-1 \leq \gamma \leq 0$, i.e. $\gamma$ has a periodic expansion (c.f. Lemma \ref{periodic-iff-rational}). In fact, we may assume that $(p^d - 1)\alpha \in \Z$ and that $d$ is a stable exponent. Let $\lambda := - \gamma$, , and we want to show that $m < p^d \lambda$. 
	
	Suppose for a contradiction that $m \geq p^d \lambda$. Fix $s \in \{0, 1, \ds, r-1\}$ and let $K_{e,s} := \alpha_0 + p \alpha_1 + \cds + p^{(e+1)d-1}\alpha_{(e+1)d-1} + sp^{(e+1)d} $. By Lemma \ref{zp-to-padic-lemma}, we have $K_{e,s} = m + p^d \lambda(p^{ed -1}) + sp^{(e+1)d}$. We claim that for $e$ large enough we have $\cC^{(e+1)d}_R \cdot \fa^{K_{e,s}} = \cC^{(e+1)d}_R \cdot \fa^{K_{e,s} +1 }$. If we then take $e$ large enough so that this holds for all $s \in \{0, 1, \ds, r-1\}$ then, by Theorem \ref{thm-summands-of-Ne} we have $N^{(e+1)d}_{(\alpha_0, \ds, \alpha_{(e+1)d-1}) }= 0$ which, by Proposition \ref{when-alpha-is-root}, is a contradiction. 
	
	To prove the claim, consider the chain of ideals
	$$\cC^{(e+1)d}_R \cdot \fa^{K_{e,s}} \supseteq \cC^{(e+1)d}_R \cdot \fa^{K_{e,s} + 1} \supseteq \cds \supseteq \cC^{(e+1)d}_R \cdot \fa^{p^d\lambda(p^{ed} - 1)+p^d + sp^{(e+1)d}}$$
	(at each stage, add 1 to the exponent of $\fa$). 
	
	Since $d$ is a stable exponent, the ideal on the right is $\tau(\fa^{\frac{\lambda(p^{ed} -1) + 1}{p^{ed}}+s})$ (c.f. Lemma \ref{stable-pe}). Observe that the sequence 
	$$e \mapsto \frac{K_{e,s}}{p^{(e+1)d}} = \lambda + s + \frac{m - p^d \lambda}{p^{(e+1)d}}$$
	decreases to $\lambda + s$ by the assumption $m - p^d \lambda \geq 0$. By Corollary \ref{cor-eiri-sequence} we have, for $e$ large enough,
	$$\cC^{(e+1)d}_R \cdot \fa^{K_{e,s}} = \tau(\fa^\lambda)$$
	and thus, by making $e$ even larger if necessary, the ideals on the left and right of the chain coincide. In particular, $\cC^{(e+1)d}_R \cdot \fa^{K_{e,s}} = \cC^{(e+1)d}_R \cdot \fa^{K_{e,s} + 1}$ as required.		
\end{proof}
\subsection{Bernstein-Sato roots and F-jumping numbers}
Our next theorem shows that some information about the $F$-jumping numbers of $\fa$ is encoded in this set of Bernstein-Sato roots. Let us denote by $FJ(\fa)$ the set of $F$-jumping numbers of $\fa$, and recall that $BS(\fa)$ is the set of Bernstein-Sato roots of $\fa$. With this notation, the theorem is as follows.

\begin{theorem} \label{thm-bsroots-and-fjn}
	We have
	$$BS(\fa) + \Z = -FJ(\fa) \cap \Z_{(p)} + \Z.$$
\end{theorem}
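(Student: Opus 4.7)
I will prove the two inclusions separately, using two main bridges. First, Proposition \ref{when-alpha-is-root} characterizes membership in $BS(\fa)$ by the non-vanishing of $N^{e_0+e}_{\alpha_{<e_0+e}}$ for all $e$. Second, Lemma \ref{N-to-testideal} (for $e_0$ a stable exponent) translates ``$N^{e_0+e}_{(\beta,\gamma)} \neq 0$ for some $\beta \in \F_p^{e_0}$'' into the existence of an $F$-jumping number in the interval $(|\gamma|/p^e, (|\gamma|+1)/p^e]+\N_0$. Throughout I will use two background facts: (a) $FJ(\fa) \bmod \Z$ is finite --- a consequence of discreteness combined with Skoda's theorem (Proposition \ref{brsk-prop}), which places every $F$-jumping number in $(FJ(\fa) \cap (0, r]) + r\N_0$; and (b) $FJ(\fa)$ is closed under multiplication by $p$, a standard property of test ideals in the $F$-finite regular setting.

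\textbf{Inclusion $BS(\fa) + \Z \subseteq -FJ(\fa)\cap\Z_{(p)} + \Z$.} Let $\alpha \in BS(\fa)$ and fix $e_0$ which is both a stable exponent and large enough that the tail $\alpha^{(e_0)} := (\alpha - |\alpha_{<e_0}|)/p^{e_0} \in \Z_{(p)}$ has purely periodic $p$-adic expansion of some period $d$. Proposition \ref{when-alpha-is-root} and Lemma \ref{N-to-testideal} give, for each $e \geq 1$, an $F$-jumping number $\lambda_e$ and $n_e \in \N_0$ with $\lambda_e - n_e \in (|\gamma_e|/p^e, (|\gamma_e|+1)/p^e]$, where $\gamma_e := (\alpha_{e_0},\ldots,\alpha_{e_0+e-1})$. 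Since $FJ(\fa) \bmod \Z$ is finite, after passing to a subsequence we may assume $\lambda_e - n_e = \mu^*$ is constant. Restricting further to $e = kd$, the formula $|\gamma_{kd}| = C(p^{kd}-1)/(p^d-1)$ with $C := \sum_{i=0}^{d-1} \alpha_{e_0+i} p^i$ and the identity $\alpha^{(e_0)} = -C/(p^d-1)$ give $|\gamma_{kd}|/p^{kd} \to -\alpha^{(e_0)}$; combined with $\mu^* - |\gamma_e|/p^e \in [0, 1/p^e]$ this forces $\mu^* = -\alpha^{(e_0)}$. Thus $-\alpha^{(e_0)} \equiv \mu^* \pmod\Z$ lies in $FJ(\fa) \bmod \Z$. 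Multiplying by $p^{e_0}$ and applying closure of $FJ(\fa)$ under $\cdot p$, I conclude that $-\alpha \equiv -p^{e_0}\alpha^{(e_0)}$ lies in $FJ(\fa) \bmod \Z$, i.e.\ $\alpha \in -FJ(\fa) + \Z$.

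\textbf{Inclusion $-FJ(\fa)\cap\Z_{(p)}+\Z \subseteq BS(\fa) + \Z$.} Let $\lambda \in FJ(\fa)\cap\Z_{(p)}$, set $\mu := \lambda \bmod 1 \in (0, 1]$ (with the convention $\mu = 1$ if $\lambda \in \N$), and write $\mu = a/b$ in lowest terms. Let $\alpha^* := -\mu \in [-1, 0] \cap \Z_{(p)}$ and fix a stable exponent $e_0$. For each $e \geq 1$ set $\gamma_e := (\alpha^*_{e_0},\ldots,\alpha^*_{e_0+e-1})$ and
$$V_e := \{\beta \in \F_p^{e_0} : N^{e_0+e}_{(\beta, \gamma_e)} \neq 0\} \subseteq \F_p^{e_0}.$$
The inclusion $V_{e+1} \subseteq V_e$ follows immediately from Lemma \ref{zero_remains_zero} applied to the one-digit extension $(\beta, \gamma_e) \mapsto (\beta, \gamma_e, \alpha^*_{e_0+e}) = (\beta, \gamma_{e+1})$. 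To show $V_e \neq \emptyset$, by Lemma \ref{N-to-testideal} it suffices to exhibit an $F$-jumping number in the interval $(|\gamma_e|/p^e, (|\gamma_e|+1)/p^e] + \N_0$. The key computation: writing $b|\alpha^*_{<e'}| + a = K_{e'} p^{e'}$ for the unique $K_{e'} \in \{1,\ldots,b\}$ characterized by $K_{e'} \equiv a(p^{e'})^{-1} \pmod b$, one obtains $|\gamma_e| = (K_{e_0+e} p^e - K_{e_0})/b$, and a short manipulation using $0 < a \leq b$ shows $p^{-(e_0+e)}\mu \bmod 1 \in (|\gamma_e|/p^e, (|\gamma_e|+1)/p^e]$. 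But if $d$ denotes the order of $p$ in $(\Z/b)^\times$ then $p^{-(e_0+e)}\mu \bmod 1 = p^j \lambda \bmod 1$ for an appropriate $j \in \{0, \ldots, d-1\}$, and $p^j \lambda \in FJ(\fa)$ by closure under $\cdot p$, providing the required $F$-jumping number. By the finite intersection property for the decreasing chain $(V_e)$ of nonempty subsets of the finite set $\F_p^{e_0}$, pick $\beta^* \in \bigcap_e V_e$ and set $\alpha := (\beta^*, \alpha^*_{e_0}, \alpha^*_{e_0+1}, \ldots) \in \Z_p$. The identity $\alpha - \alpha^* = |\beta^*| - |\alpha^*_{<e_0}| \in \Z$ gives $\alpha \equiv -\lambda \pmod \Z$; and $N^{e_0+e}_{\alpha_{<e_0+e}} \neq 0$ for all $e \geq 1$ by construction, which extends downward to $e' \leq e_0$ by iterating Lemma \ref{zero_remains_zero}. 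Hence $\alpha \in BS(\fa)$ by Proposition \ref{when-alpha-is-root}.

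\textbf{Main technical point.} The crux is the explicit identification, in the reverse direction, of which element of the finite $\cdot p$-orbit $\{p^i \mu \bmod 1 : i \in \Z/d\}$ lives in $(|\gamma_e|/p^e, (|\gamma_e|+1)/p^e]$. This requires a careful $p$-adic-to-real bookkeeping starting from the defining congruence $b|\alpha^*_{<e'}| + a \equiv 0 \pmod{p^{e'}}$; once this is set up, the containment reduces to $0 < a \leq b$. The subsequential-limit argument in the forward direction, while conceptually cleaner, hinges on the non-trivial interplay between the $p$-adic-digit behavior of $\alpha$ and the real-number behavior of $|\gamma_e|/p^e$, which only converges to the ``expected'' value $-\alpha^{(e_0)}$ once we restrict to the arithmetic progression $e = kd$ and choose $e_0$ large enough that the expansion of $\alpha^{(e_0)}$ is purely periodic.
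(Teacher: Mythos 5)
Your proof is correct and uses the same skeleton as the paper's: Proposition \ref{when-alpha-is-root} and Lemma \ref{N-to-testideal} translate membership in $BS(\fa)$ into the non-vacuity of intervals containing $F$-jumping numbers, and the heart of the argument in both directions is $p$-adic bookkeeping relating the digit strings $\gamma_e$ to real approximations of those $F$-jumping numbers. The one substantive difference is that you invoke, in both inclusions, the external fact that $FJ(\fa)$ is closed under multiplication by $p$ (a valid fact, and one the paper cites in the discussion preceding Lemma \ref{root-dynamics}). The paper's proof avoids this dependency with a small alignment trick: it chooses a single exponent $d$ that is simultaneously a stable exponent \emph{and} a period of $\gamma = -\lambda$ (resp.\ of the periodic tail of $\alpha$). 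With this alignment, Lemma \ref{zp-to-padic-lemma} gives $|\gamma_{<ed}| = \lambda(p^{ed}-1)$ on the nose, so the relevant intervals $\bigl(\lambda(p^{ed}-1)/p^{ed}, (\lambda(p^{ed}-1)+1)/p^{ed}\bigr]$ contain $\lambda$ itself and shrink to $\{\lambda\}$, and no rescaling by $p^j$ is needed. Your version decouples the stable exponent $e_0$ from the period, which is what forces the auxiliary $K_{e'}$ bookkeeping and the $p^{-(e_0+e)}\mu \bmod 1$ detour; the alignment trick is worth absorbing because it makes that arithmetic disappear. On the plus side, your reverse direction's nested-sets argument (using Lemma \ref{zero_remains_zero} to make $(V_e)$ a decreasing chain of nonempty subsets of the finite set $\F_p^{e_0}$) is a modest improvement over the paper's subsequence pigeonhole: it produces a genuine $\beta^* \in \bigcap_e V_e$ rather than just a recurrent value.

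One small slip in the first inclusion: you pass to a subsequence making $\lambda_e - n_e$ constant and \emph{then} ``restrict further to $e = kd$.'' Those two restrictions may have empty overlap. The fix is harmless --- restrict to $e = kd$ first and then pigeonhole on the (finitely many) values of $\lambda_{kd} - n_{kd}$, or pigeonhole on the pair $(\lambda_e - n_e, e \bmod d)$ --- but as written the order is backwards.
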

We will make use of results from the appendix (Section \ref{subsn-p-adic-exp}) in the proof.
\begin{proof}
	Let $\lambda \in \Z_{(p)}$ with $0 < \lambda \leq 1$ be such that $\lambda + \Z$ contains an $F$-jumping number of $\fa$. We will show that $-\lambda \in BS(\fa) + \Z$ and this will prove the inclusion $(\supseteq)$. Let $d$ be large enough so that $(p^d - 1) \lambda \in \N$ and so that $d$ is a stable exponent and let $\gamma := - \lambda$. Observe that for all $e > 0$ we have
	$$\frac{\lambda(p^{ed} - 1)}{p^{ed}} < \lambda \leq \frac{\lambda (p^{ed} - 1) + 1}{p^{ed}}$$
	where $\lambda(p^{ed}-1) \in \N$. By Lemma \ref{zp-to-padic-lemma} we have $\lambda(p^{ed} - 1) = \gamma_0 + p \gamma_1 + \cds + p^{ed -1} \gamma_{ed-1} = |\gamma_{< ed}|$. From Lemma \ref{N-to-testideal} we conclude that for all $e$ there exists some $\beta \in \F_p^d$ such that $N^{d + ed}_{(\beta, \gamma_{<ed})} \neq 0.$ As $\F_p^d$ is a finite set there exists some $\beta_0$ and a sequence $e_n \nearrow \infty$ such that $N^{d + e_nd}_{(\beta_0, \gamma_{<e_n d})} \neq 0$. From Proposition \ref{when-alpha-is-root} it follows that $|\beta_0| - |\gamma_{d}| + \gamma$ is a root and thus $\gamma \in BS(\fa) + \Z$ as required.

	Suppose now that $\alpha \in BS(\fa)$. By Theorem \ref{BSroots-are-rational} we have that $\alpha \in \Z_{(p)}$. Choose $\gamma \in \Z_{(p)}$ as in Lemma \ref{p-adic-lemma3} -- that is, $-1 \leq \gamma \leq 0$ has a periodic expansion that is eventually equal to the expansion of $\alpha$, and in particular $\gamma + \Z = \alpha + \Z$. Let $\lambda := -\gamma$, and we will show that $\lambda + s$ is an $F$-jumping number for some $s \in \N$, which will complete the proof.
	
	Let $d$ be the length of the period of $\gamma$ and let $\t{\gamma} = (\gamma_0, \ds, \gamma_{d-1})$ be the period. We may assume $d$ is large enough to be a stable exponent and so that $\gamma_i = \alpha_i$ for all $i\geq d$. As $\alpha$ is a root it follows from Proposition \ref{when-alpha-is-root} that for all $e > 0$ we have $N^{(e+1)d}_\alpha \neq 0$. The first $(e+1)d$ digits of the expansion of $\alpha$ are
	$$(\alpha_0, \ds, \alpha_{d-1}, \t{\gamma}, \ds , \t{\gamma})$$
	where $\t{\gamma}$ is repeated $e$ times. By Lemma \ref{zp-to-padic-lemma} we have $|(\t{\gamma}, \ds, \t{\gamma})| = \lambda(p^{ed} -1)$ and, by Lemma \ref{N-to-testideal}, the set
	$$\bigg( \frac{\lambda(p^{ed} -1)}{p^{ed}}, \frac{\lambda(p^{ed} -1) + 1}{p^{ed}} \bigg] + \{0, \ds, r-1\}$$
	contains an F-jumping number of $\fa$ for all $e > 0$. As $\{0, \ds, r-1\}$ is finite there exists some $0 \leq s < r$ and a subsequence $e_n \nearrow \infty$ such that
	$$\bigg( \frac{\lambda(p^{e_nd} -1)}{p^{e_nd}}, \frac{\lambda(p^{e_nd} -1) + 1}{p^{e_nd}} \bigg] + s$$
	contains an F-jumping number. It follows that $\lambda + s$ is an F-jumping number as required.
\end{proof}

\subsection{Another characterizations of Bernstein-Sato roots}

Here we provide one more characterization of the Bernstein-Sato roots of an ideal $\fa$ that simplifies a lot of computations. As usual, let $R$ be regular and $F$-finite and let $\fa \sq R$ be an ideal. We denote by $N^e$ the modules $N^e_\fa$ constructed in Section \ref{scn-multi-eigen-dec} (after a choice of generators $(f_1, \ds, f_r)$ for $\fa$). Recall that, given a positive integer $e$, we denote by $\nu^\bullet_\fa(p^e)$ the set of $\nu$-invariants of level $e$ for $\fa$ (c.f. Definition \ref{def-nu-invt}, Proposition \ref{nu-invt-trun-ti-prop}).

\begin{lemma}
	The $\nu$-invariants for $\fa$ come in a decreasing chain
	$$\nu^\bullet_\fa(p^0) \supseteq \nu^\bullet_\fa(p^1) \supseteq \cds \supseteq \nu^\bullet_\fa(p^e) \supseteq \nu^\bullet_\fa(p^{e+1}) \supseteq \cds$$
\end{lemma}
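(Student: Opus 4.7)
The plan is to translate the statement through the characterization from Proposition \ref{nu-invt-trun-ti-prop}, namely $n \in \nu^\bullet_\fa(p^e)$ if and only if $\cC^e_R \cdot \fa^n \neq \cC^e_R \cdot \fa^{n+1}$. By contrapositive it then suffices to show that if $\cC^e_R \cdot \fa^n = \cC^e_R \cdot \fa^{n+1}$ for some $n$ and $e$, then $\cC^{e+1}_R \cdot \fa^n = \cC^{e+1}_R \cdot \fa^{n+1}$.

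To carry this out, I would transport the $\cC^e_R$-equality to a statement about differential operators using Lemma \ref{cartier-containment-lemma}(c): the equality $\cC^e_R \cdot \fa^n = \cC^e_R \cdot \fa^{n+1}$ is equivalent to $D^e_R \cdot \fa^n = D^e_R \cdot \fa^{n+1}$. The latter is much easier to propagate along the tower of operators, because the inclusion $D^e_R \sq D^{e+1}_R$ (together with $1 \in D^e_R$) gives $D^{e+1}_R \cdot D^e_R = D^{e+1}_R$. Applying $D^{e+1}_R \cdot$ to both sides of $D^e_R \cdot \fa^n = D^e_R \cdot \fa^{n+1}$ then yields
$$D^{e+1}_R \cdot \fa^n \;=\; D^{e+1}_R \cdot (D^e_R \cdot \fa^n) \;=\; D^{e+1}_R \cdot (D^e_R \cdot \fa^{n+1}) \;=\; D^{e+1}_R \cdot \fa^{n+1}.$$
Translating back via Lemma \ref{cartier-containment-lemma}(c) gives $\cC^{e+1}_R \cdot \fa^n = \cC^{e+1}_R \cdot \fa^{n+1}$, which by Proposition \ref{nu-invt-trun-ti-prop} says $n \notin \nu^\bullet_\fa(p^{e+1})$, as desired.

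There is no serious obstacle here: the entire argument hinges on the fact that equalities of the form $\cC^e_R \cdot I = \cC^e_R \cdot J$ behave nicely under raising $e$, and this behavior is transparent on the differential-operator side, where the towers $D^e_R \sq D^{e+1}_R$ make propagation automatic. The Cartier-side description is harder to handle directly (the composition law $\alpha \cdot \beta = \alpha \circ F^e_* \beta$ mixes levels in a less symmetric way), which is exactly why I prefer to route through Lemma \ref{cartier-containment-lemma}(c).
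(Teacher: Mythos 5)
Your proof is correct, but it takes a different route from the paper's. The paper's proof is a one-liner working directly from the definition of $\nu^J_\fa(p^e)$: since $J^{[p^{e+1}]} = (J^{[p^e]})^{[p]}$, one has $\nu^J_\fa(p^{e+1}) = \nu^{J^{[p]}}_\fa(p^e)$, and since $\sqrt{J^{[p]}} = \sqrt{J}$ the ideal $J^{[p]}$ is still admissible, so every $\nu$-invariant of level $e+1$ is literally a $\nu$-invariant of level $e$ witnessed by a different $J$. You instead invoke the intrinsic characterization $\nu^\bullet_\fa(p^e) = \{n : \cC^e_R \cdot \fa^n \neq \cC^e_R \cdot \fa^{n+1}\}$ from Proposition \ref{nu-invt-trun-ti-prop}, pass to the differential-operator side via Lemma \ref{cartier-containment-lemma}(c), and exploit the ring tower $D^e_R \sq D^{e+1}_R$ to propagate the equality $D^e_R \cdot \fa^n = D^e_R \cdot \fa^{n+1}$ upward. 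Both arguments are clean and short. The paper's route is more elementary (no auxiliary lemmas needed and no contrapositive), whereas yours is conceptually aligned with how $\nu$-invariants are actually used elsewhere in the paper (via the Cartier/differential-operator dichotomy), and it makes transparent the key ring-theoretic fact $D^{e+1}_R \cdot D^e_R = D^{e+1}_R$ that does the propagation; either would be acceptable.
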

\begin{proof}
	Given an ideal $J$, we have $J^{[p^{e+1}]} = (J^{[p^e]})^{[p]}$. It then follows from the definition that $\nu^{J}_\fa(p^{e+1}) = \nu^{J^{[p]}}_\fa(p^e)$. 	
\end{proof}
We can now state our new characterization.
\pagebreak
\begin{proposition} \label{prop-BSroots-newChar}
		The following sets are equal.
		\begin{enumerate}[(a)]
			\item The set of Bernstein-Sato roots of the ideal $\fa$.
			\item The set of $p$-adic limits of sequences $(\nu_e) \sq \N$ where $\nu_e \in \nu^\bullet_\fa(p^e)$. 
			\item The set
			$$\bigcap_{e = 0}^\infty \overline{\nu^\bullet_\fa(p^e)},$$
			where $(\bar{ \ } )$ stands for $p$-adic closure.
		\end{enumerate}
\end{proposition}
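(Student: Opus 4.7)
The plan is to prove the equivalences (a)$\Leftrightarrow$(c) and (b)$\Leftrightarrow$(c) separately. The equivalence (b)$\Leftrightarrow$(c) should be purely topological once one exploits the monotonicity $\nu^\bullet_\fa(p^n) \sq \nu^\bullet_\fa(p^e)$ for $n \geq e$ established in the preceding lemma. Unwinding the definition of $p$-adic closure in $\Z_p$, one sees that $\alpha \in \bigcap_{e} \overline{\nu^\bullet_\fa(p^e)}$ if and only if for every $e > 0$ there exists some $\nu_e \in \nu^\bullet_\fa(p^e)$ with $\nu_e \equiv \alpha \mod p^e$. Using the monotonicity, this condition is in turn equivalent to the existence of a sequence $(\nu_e)$ with $\nu_e \in \nu^\bullet_\fa(p^e)$ and $\nu_e \to \alpha$ in the $p$-adic metric, giving (b)$=$(c).

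For (a)$\Leftrightarrow$(c), I would first rewrite (c) in the equivalent form found above. On the other hand, Proposition \ref{when-alpha-is-root} states that $\alpha$ is a Bernstein-Sato root if and only if $N^e_{\alpha_{<e}} \neq 0$ for every $e > 0$, and combining Theorem \ref{thm-summands-of-Ne}(b) with Proposition \ref{nu-invt-trun-ti-prop} further translates this into the assertion that for each $e$ there exists some $s \in \{0, 1, \ds, r-1\}$ with $|\alpha_{<e}| + sp^e \in \nu^\bullet_\fa(p^e)$.

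The implication (a)$\Rightarrow$(c) is then immediate since $|\alpha_{<e}| + sp^e \equiv \alpha \mod p^e$. For (c)$\Rightarrow$(a), given some $\nu = |\alpha_{<e}| + kp^e \in \nu^\bullet_\fa(p^e)$ whose existence is guaranteed by (c), the key tool will be Corollary \ref{nu-invt-dynamics-cor}: whenever $\nu \geq rp^e$ one may replace $\nu$ by $\nu - p^e$ and remain in $\nu^\bullet_\fa(p^e)$. Iterating this, I can reduce $k$ into $\{0, 1, \ds, r-1\}$, since the termination criterion $|\alpha_{<e}| + sp^e < rp^e$ combined with $|\alpha_{<e}| < p^e$ forces $s \leq r-1$.

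I do not expect a substantial obstacle: the main technical content is precisely this reduction of an arbitrary $\nu$-invariant congruent to $\alpha$ modulo $p^e$ down to the restricted form $|\alpha_{<e}| + sp^e$ with $s < r$, and Corollary \ref{nu-invt-dynamics-cor} (whose proof uses Skoda's theorem) is designed for exactly this purpose.
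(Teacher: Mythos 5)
Your proof is correct and takes essentially the same approach as the paper: the same key ingredients—Proposition \ref{when-alpha-is-root}, Theorem \ref{thm-summands-of-Ne}(b), Proposition \ref{nu-invt-trun-ti-prop}, Corollary \ref{nu-invt-dynamics-cor}, and the monotonicity of $\nu^\bullet_\fa(p^e)$ (via the paper's Lemma \ref{lemma-met-spc})—drive both arguments. The only cosmetic difference is that you prove (a)$\Leftrightarrow$(c) directly via a congruence reformulation of the intersection, whereas the paper proves (a)$\Leftrightarrow$(b) and invokes Lemma \ref{nu-to-alpha-lemma} to extract the base-$p$ digits; your organization is arguably slightly cleaner.
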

\begin{proof}
	The equality between (b) and (c) is a general fact about metric spaces, which we prove in Lemma \ref{lemma-met-spc}. We therefore prove that (a) and (b) are equal. 
	
	Suppose that $\alpha$ is a Bernstein-Sato root of $\fa$ and let $\alpha = \alpha_0 + p \alpha_1 + \cds$ be its $p$-adic expansion. By Proposition \ref{when-alpha-is-root}, for all $e > 0$ we have $N^e_{(\alpha_0, \alpha_1, \ds, \alpha_{e - 1})}$. From Theorem \ref{thm-summands-of-Ne} (b) it follows that for all $e$ there exists some $s_e \in \{0, 1, \ds, r-1\}$ such that, if $\nu_e = \alpha_0 + p \alpha_1 + \cds + p^{e-1} \alpha_{e-1} + p^e s_e$, then $\cC^e_R \cdot \fa^{\nu_e} \neq \cC^e_R \cdot \fa^{\nu_e + 1}$, that is $\nu_e \in \nu^\bullet_\fa(p^e)$ (c.f. Proposition \ref{nu-invt-trun-ti-prop}). Since $\alpha$ is the $p$-adic limit of the sequence $(\nu_e)$, it follows that $\alpha$ is in (b).
	
	Now suppose that $\alpha$ is the $p$-adic limit of a sequence $(\nu_e)$ with $\nu_e \in \nu^\bullet_\fa(p^e)$; that is, $\cC^e_R \cdot \fa^{\nu_e} \neq \cC^e_R \cdot \fa^{\nu_e + 1}$. By Corollary \ref{nu-invt-dynamics-cor} we may assume that $\nu_e \leq r p^e$ for all $e$. By Lemma \ref{nu-to-alpha-lemma} we have $\nu_e - \lfloor \nu_e/p^e \rfloor p^e = \alpha_0 + p \alpha_1 + \cds + p^{e - 1} \alpha_{e -1}$ and, by Theorem \ref{thm-summands-of-Ne} (b), we have $N^e_{(\alpha_0, \ds, \alpha_{e -1})} \neq 0$ for all $e$. It follows from Proposition \ref{when-alpha-is-root} that $\alpha$ is a Bernstein-Sato root of $\fa$. 
\end{proof}
\begin{lemma} \label{lemma-met-spc}
	Let $X$ be a metric space and $X_0 \supseteq X_1 \supseteq X_2 \supseteq \cds $ be a decreasing sequence of subspaces of $X$. Let $Y$ be the subspace of $X$ given by $Y := \{\lim_{i \to \infty} (x_e) : x_e \in X_e\}$. Then
	$$Y = \bigcap_{e = 0}^\infty \overline{X_e},$$
	where $\overline{X_e}$ is the closure of $X_e$ in $X$. 
\end{lemma}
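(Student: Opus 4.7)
The plan is to prove the two inclusions separately; both are direct unwinding of definitions, with the only subtlety being to exploit the fact that the family $(X_e)$ is decreasing.

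For the inclusion $Y \subseteq \bigcap_{e \geq 0} \overline{X_e}$, I would fix $y \in Y$ and write $y = \lim_{e \to \infty} x_e$ with $x_e \in X_e$. For any fixed $e_0$, the tail sequence $(x_e)_{e \geq e_0}$ still converges to $y$, and because $X_{e_0} \supseteq X_{e_0 + 1} \supseteq \cdots$ we have $x_e \in X_{e_0}$ for all $e \geq e_0$. Thus $y \in \overline{X_{e_0}}$, and since $e_0$ was arbitrary, $y \in \bigcap_{e \geq 0} \overline{X_e}$.

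For the reverse inclusion $\bigcap_{e \geq 0} \overline{X_e} \subseteq Y$, I would fix $y \in \bigcap_{e \geq 0} \overline{X_e}$. For each $e \geq 0$, since $y \in \overline{X_e}$ I may pick some $x_e \in X_e$ with $d(x_e, y) < 1/(e+1)$. Then $x_e \in X_e$ by construction and $x_e \to y$ as $e \to \infty$, so $y \in Y$ by definition.

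There is no real obstacle here; the only point that requires the descending hypothesis is the first inclusion, where we need the tail of the chosen sequence to stay inside a single $X_{e_0}$. The second inclusion only uses that each $\overline{X_e}$ is the closure of $X_e$ in the metric sense, which allows us to approximate $y$ to arbitrary precision by elements of $X_e$ itself.
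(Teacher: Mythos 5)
Your proof is correct and takes essentially the same approach as the paper: one inclusion from the decreasing hypothesis forcing the tail into each $X_{e_0}$, the other from picking approximants $x_e \in X_e$ within $1/(e+1)$ of $y$. The only cosmetic difference is that the paper first chooses a full sequence in $X_e$ converging to $y$ and then extracts one good term, whereas you invoke the metric characterization of closure directly.
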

\begin{proof}
	If $y \in Y$ we may write $y = \lim_{i \to \infty} (x_i)$ where $x_i \in X_i$. Given $e > 0$, $x_i \in X_e$ for all $i \geq e$ and therefore $y \in \overline{X_e}$. 
	
	Suppose now that $z \in \cap_{e = 0}^\infty \overline{X_e}$. This means that for all $e$ there is a sequence $(x_{e,i})_{i = 0}^\infty \sq X_e$ such that $z = \lim_{i \to \infty} (x_{e,i})$. Given $e$ we can therefore find an integer $i(e)$ such that $d(z, x_{e, i(e)}) \leq 1/e$. Then $z = \lim_{e \to \infty} (x_{e, i(e)})$ and since $x_{e, i(e)} \in X_e$ for all $e$, $z$ is in $Y$.
\end{proof}
\begin{lemma} \label{nu-to-alpha-lemma}
Let  $(\nu_e) \sq \N$ be a sequence of positive integers such that $\nu_{e + i} \equiv \nu_e \mod p^{e}$ for all $e$. Let $\alpha \in \Z_p$ be the $p$-adic limit of the sequence $(\nu_e)$. If $\alpha = \alpha_0 + p \alpha_1 + p^2 \alpha_2 + \cds $ is the $p$-adic expansion of $\alpha$ (i.e. $\alpha_j \in \{0, 1, \ds, p-1\}$ for all $j$), then for all $e$ we have
	$$\nu_e - \lfloor \frac{\nu_e}{p^e} \rfloor p^{e} = \alpha_0 + p \alpha_1 + \cds + p^{e - 1} \alpha_{e -1}.$$
\end{lemma}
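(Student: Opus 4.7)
The plan is to observe that both sides of the claimed equality are simply the unique representative of $\nu_e$ modulo $p^e$ inside $\{0, 1, \ldots, p^e - 1\}$, so the statement reduces to checking a single congruence and a size bound.

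First I would establish that $\alpha \equiv \nu_e \pmod{p^e}$ in $\Z_p$. The hypothesis says $\nu_{e+i} \equiv \nu_e \pmod{p^e}$ for every $i \geq 0$, so the tail $(\nu_{e+i})_{i \geq 0}$ is constant modulo $p^e$. Since $\alpha = \lim_{m \to \infty} \nu_m$ in the $p$-adic topology and reduction modulo $p^e$ is continuous, we conclude $\alpha \equiv \nu_e \pmod{p^e}$.

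Next, from the $p$-adic expansion $\alpha = \alpha_0 + p\alpha_1 + p^2\alpha_2 + \cds$, the terms $p^e \alpha_e + p^{e+1} \alpha_{e+1} + \cds$ are all divisible by $p^e$, so
$$S_e := \alpha_0 + p\alpha_1 + \cds + p^{e-1}\alpha_{e-1} \equiv \alpha \equiv \nu_e \pmod{p^e}.$$
Moreover, since $0 \leq \alpha_j \leq p-1$ for each $j$, we have the bound $0 \leq S_e \leq (p-1)(1 + p + \cds + p^{e-1}) = p^e - 1$.

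Finally, the right-hand side $\nu_e - \lfloor \nu_e / p^e \rfloor p^e$ is, by construction, the unique integer in $\{0, 1, \ldots, p^e - 1\}$ congruent to $\nu_e$ modulo $p^e$. Since $S_e$ lies in the same range and satisfies the same congruence, the two are equal. There is no real obstacle here: the argument is just a careful unwinding of the definition of the $p$-adic expansion of a limit, combined with the uniqueness of base-$p$ digits.
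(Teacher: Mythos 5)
Your proof is correct and follows essentially the same route as the paper's: establish that both sides are congruent to $\alpha$ modulo $p^e$ via the chain $\nu_e - \lfloor \nu_e/p^e \rfloor p^e \equiv \nu_e \equiv \alpha \equiv \alpha_0 + p\alpha_1 + \cdots + p^{e-1}\alpha_{e-1} \pmod{p^e}$, then conclude by noting both are integers in $\{0, \ldots, p^e - 1\}$. You are slightly more explicit than the paper in justifying $\nu_e \equiv \alpha \pmod{p^e}$ from the constancy of the tail and continuity of reduction, but the argument is the same.
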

\begin{proof}
	We have the following equivalences:
	$$\nu_e - \lfloor \frac{\nu_e}{p^{e}} \rfloor p^{e} \equiv \nu_e \equiv \alpha \equiv \alpha_0 + p \alpha_1 + \cds + p^{e -1} \alpha_{e- 1} \mod p^{e}.$$
	Since both $\nu_e - \lfloor \frac{\nu_e}{p^{e}} \rfloor p^{e}$ and $\alpha_0 + \cds + p^{e-1}\alpha_{e- 1}$ are integers between $0$ and $p^{e} - 1$ the statement follows.
\end{proof}
\subsection{Examples}
\begin{enumerate}[(1)]
	\item Let $p \geq 5$ and consider the principal ideal $\fa = (f)$ where $f = x^2 + y^3$. Then for all $e \geq 2$ the $\nu$-invariants are given by
	$$\nu^\bullet_f(p^e) = \begin{cases}
	\frac{5}{6}(p^e -1) + \N_0 p^e \text{ if } p \equiv 1 \mod 3 \\
	\frac{5}{6}p^e - \frac{1}{6}p^{e-1} - 1 + \N_0 p^e \text{ if } p \equiv 2 \mod 3
	\end{cases}$$
	(see \cite{MTW} and \cite[Ex. 3.4]{Mustata2009}). From Proposition \ref{prop-BSroots-newChar} we conclude that the Bernstein-Sato roots of $\fa$ are given by
	$$BS(\fa) = \begin{cases} \{-5/6, -1\} \text{ if } p \equiv 1 \mod 3 \\
	\{-1\} \text{ if } p \equiv 2 \mod 3. \end{cases}$$
	Observe the agreement with Bitoun's result: the Bernstein-Sato roots are precisely the $F$-jumping numbers in $(0, 1] \cap \Z_{(p)}$.
	
	\item Consider the monomial ideal $\fa = (x^2, y^3)$. Then we claim that the  $\nu$-invariants are given by
	$$\nu^\bullet_\fa(p^e) := \bigg\{ \lfloor \frac{ap^e - 1}{2} \rfloor + \lfloor \frac{bp^e - 1}{3} \rfloor :  a, b \in \N \bigg\}.$$
	Indeed, all the ideals $\cC^e_R \cdot \fa^s$ are monomial ideals and, therefore, $n$ is in $\nu^J_\fa(p^e)$ if and only if there exists some monomial $\mu = x^a y^b$ such that $\mu \in \cC^e_R \cdot \fa^n$ and $\mu \not \in \cC^e_R \cdot \fa^{n + 1}$. This is true, in turn, precisely when $n = \max\{s >0 : \mu^{p^e} \in \fa^s\}$. Since $\fa^s = (x^{2u} y^{3v}: u + v = s)$, this is equivalent to $n = \max\{u + v : u, v \in \N_0, 2u \leq ap^e -1, 3v \leq bp^e-1\}$, i.e. $n = \lfloor(ap^e - 1)/2 \rfloor + \lfloor (bp^e -1)/3 \rfloor$.
	
	\begin{enumerate}[(i)]
		\item Suppose that $p = 2$ and that $e$ is even. Then for all $a \in \N$ we have $\lfloor ap^{e} - 1)/2 \rfloor = a p^{e-1} - 1$, while
		$$\lfloor \frac{b p^e -1}{3} \rfloor = \begin{cases}
			c p^e - 1 \text{ if } b = 3c \\
			(c - \frac{1}{3})p^e - \frac{1}{3} \text{ if } b = 3c - 1 \\
			(c - \frac{2}{3})p^e - \frac{2}{3} \text{ if } b = 3 c - 2,
		\end{cases}$$
		where we always take $c \in \N$. We conclude that, for even $e$,
		\begin{align*}
		\nu^\bullet_\fa(p^e) & = \bigg\{a p^{e-1}+ c p^e - 2 : a, c \in \N \bigg\} \cup \bigg\{a p^{e-1}+ (c - \frac{1}{3}) p^e - \frac{4}{3}: a, c \in \N \bigg\} \\
			& \hspace*{50pt} \cup \bigg\{a p^{e-1}+ (c - \frac{2}{3}) p^e - \frac{5}{3}: a, c \in \N \bigg\} 
		\end{align*}
		and therefore $BS(\fa) = \{-4/3, -5/3, -2\}$ by Proposition \ref{prop-BSroots-newChar}.
		
		\item Suppose that $p = 3$. Then for all $e$ we have $\lfloor bp^e - 1 /3 \rfloor = bp^{e-1} - 1$, while
		$$\lfloor \frac{a p^e - 1}{2} \rfloor = \begin{cases}
		c p^e - 1 \text{ if } a = 2c \\
		(c - \frac{1}{2}) p^e - \frac{1}{2} \text{ if } a = 2c -1,
		\end{cases}$$
		where $c \in \N$, so we conclude that 
		$$\nu^\bullet_\fa(p^e) = \bigg\{ c p^e + bp^{e-1} - 2 \bigg\}  \cup \bigg\{ (c - \frac{1}{2})p^e + b p^{e-1} - \frac{3}{2} \bigg\}$$
		and therefore $BS(\fa) = \{-3/2, -2\}$. 
	\end{enumerate}
\end{enumerate}
\pagebreak
\subsection{Open questions}

The analogy with results from characteristic zero begs the following questions.

\begin{question}
	Suppose that the $F$-pure threshold $\alpha$ of $\fa$ lies in $\Z_{(p)}$. Is the largest Bernstein-Sato root of $\fa$ equal to $- \alpha$? 
\end{question}
We know the answer to this question is affirmative whenever $\fa$ is a principal ideal by \cite{Bitoun2018}, or when $\fa$ is a monomial ideal and $p \gg 0$ by \cite{QG19b}. 
\begin{question}
	In characteristic zero a root of the Bernstein-Sato polynomial comes with a multiplicity. Is there an analogue of the multiplicity of a root in characeristic $p$?
\end{question}

\begin{question}
	In \cite{Mustata2019}, \Mustata \ has shown that, over $\C$, if $\fa \sq R$ is an ideal generated by $(f_1, f_2, \ds, f_r)$ and we consider the element $F = f_1 y_1 + \cds + f_r y_r$ of $R[y_1, \ds, y_r]$ then $b_\fa(s) = b_F(s)/(s+1)$. In positive characteristic the Bernstein-Sato roots of $F$ must lie in $(0, 1]$ by \cite{Bitoun2018} so a completely analogous statement cannot be true, but we wonder if a similar statement can be found in positive characteristic.
\end{question}

\section{Appendix on $p$-adic expansions} \label{subsn-p-adic-exp}
We denote the $p$-adic integers by $\Z_p$. Recall that $\Z_p$ is the completion of $\Z_{(p)}$ at its maximal ideal $(p)$ and thus we have an inclusion $\Z_{(p)} \sq \Z_p$. We will say a $p$-adic number $\alpha$ is rational if $\alpha \in \Z_{(p)}$.

Given some $\alpha \in \Z_p$ we denote by $\alpha_i$ the unique integers with $0 \leq \alpha_i < p$ such that $\alpha = \alpha_0 + p \alpha_1 + p^2 \alpha_2 + \cds$. By an abuse of notation we will also denote by $\alpha_i$ the corresponding classes of these numbers in $\F_p$. We refer to the list $(\alpha_0, \alpha_1, \ds)$ as the expansion of $\alpha$. We say it is periodic (resp. eventually periodic) if there exists some $d$ such that $\alpha_{i + d} = \alpha_i$ for all $i \geq 0$ (resp. for all $i\geq 0$ large enough). We call such $d$ a period for $\alpha$.

Recall that given $\lambda \in \Q$ we have $\lambda \in \Z_{(p)}$ if and only if there exists some $d > 0$ with $\lambda(p^d - 1) \in \Z$ (for example one may apply Euler's theorem to the denominator of $\lambda$).

\begin{lemma} \label{periodic-iff-rational}
	Let $\alpha \in \Z_p$. Then the expansion of $\alpha$ is eventually periodic if and only if $\alpha$ is rational. Moreover, the list is periodic if and only if $\alpha$ is rational with $-1 \leq \alpha \leq 0$. A positive integer $d$ with $\alpha(p^d - 1) \in \Z$ is a period for $\alpha$.
\end{lemma}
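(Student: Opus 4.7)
The plan is to establish both biconditionals through direct manipulation of the $p$-adic expansion, leveraging the identity $\sum_{j \geq 0} p^{jd} = 1/(1 - p^d)$ in $\Z_p$ together with the fact that $p^d - 1$ is a unit in $\Z_p$ (being coprime to $p$).

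First I would handle the ``easy'' direction of both claims at once. Suppose $\alpha \in \Z_p$ has periodic expansion with period $d$, and set $k := \sum_{i = 0}^{d-1} \alpha_i p^i$, so that $0 \leq k \leq p^d - 1$. Then in $\Z_p$:
$$\alpha = \sum_{i \geq 0} \alpha_i p^i = \sum_{j \geq 0} p^{jd} \sum_{i = 0}^{d - 1} \alpha_i p^i = \frac{k}{1 - p^d} = \frac{-k}{p^d - 1}.$$
This shows $\alpha \in \Z_{(p)}$, that $-1 \leq \alpha \leq 0$ as a rational number, and that $\alpha(p^d - 1) = -k \in \Z$. The ``eventually periodic $\Rightarrow$ rational'' direction then follows by splitting $\alpha = P + p^e \gamma$ into its pre-periodic and periodic parts, so that $\gamma \in \Z_{(p)}$ by the above and hence $\alpha \in \Z_{(p)}$.

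For the converse of the periodic claim (which also yields the ``moreover'' statement), suppose $\alpha \in \Z_{(p)}$ with $-1 \leq \alpha \leq 0$ and $\alpha(p^d - 1) \in \Z$. Set $k := -\alpha(p^d - 1)$, which lies in $\{0, 1, \ldots, p^d - 1\}$, write $k = k_0 + k_1 p + \cdots + k_{d-1}p^{d-1}$ in base $p$, and let $\beta \in \Z_p$ be the element with $\beta_i = k_{i \bmod d}$. Applying the previous step to $\beta$ gives $\beta = -k/(p^d - 1) = \alpha$, so by the uniqueness of the $p$-adic expansion the expansion of $\alpha$ is periodic with period $d$.

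The only remaining step, and the main obstacle, is to show that $\alpha \in \Z_{(p)}$ has eventually periodic expansion. My plan is to use the shift map $T: \Z_p \to \Z_p$ given by $T(\gamma) := (\gamma - \gamma_0)/p$, which removes the leading digit of the expansion, so that $\alpha$ has eventually periodic expansion if and only if the orbit $\{T^n(\alpha)\}_{n \geq 0}$ eventually cycles. Writing $\alpha = a/b$ with $b > 0$ coprime to $p$, one checks that $T$ preserves $\Z_{(p)}$ and that if $T^n(\alpha) = a_n/b$ then $a_{n+1} = (a_n - b \alpha_0^{(n)})/p \in \Z$, yielding the bound $|a_{n+1}| \leq (|a_n| + b(p-1))/p$. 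A short induction with $M := \max(|a|, b)$ shows $|a_n| \leq M$ for all $n$, so the orbit lies in the finite set $\{m/b : m \in \Z, |m| \leq M\}$. Hence $T^m(\alpha) = T^n(\alpha)$ for some $m < n$, which forces $\alpha_{j + (n - m)} = \alpha_j$ for all $j \geq m$, completing the proof.
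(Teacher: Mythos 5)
Your proof is correct, and for the key step (rational $\Rightarrow$ eventually periodic) it takes a genuinely different route from the paper. The paper handles this direction by first reducing to the case $\alpha_0 \neq 0$, then splitting on the sign of $\alpha$: for negative $\alpha$ it writes $\alpha = \beta - N$ with $\beta \in [-1,0)$ and $N \in \N_0$, observes that the periodic expansion of $\beta$ has infinitely many nonzero digits so that $N$ eventually ``fits'' into the initial segment, and for positive $\alpha$ it appeals to an explicit digit-negation formula. Your approach instead introduces the shift map $T(\gamma) = (\gamma - \gamma_0)/p$ and shows its orbit on $\Z_{(p)}$ lies in a finite set by a denominator-preserving bound, forcing a cycle. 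This is cleaner and more uniform: it avoids the sign split, the digit-negation identity, and the slightly delicate ``$N$ fits into the initial segment'' argument; the only thing you need is that $T$ preserves the denominator $b$ and the numerator bound $M = \max(|a|,b)$, both of which you verify by a one-line induction. The paper's argument buys an explicit description of how the pre-period interacts with the period via the decomposition $\alpha = \beta - N$, which is slightly more constructive, but at the cost of more casework. The other directions (periodic $\Rightarrow \alpha = -k/(p^d-1) \in [-1,0] \cap \Z_{(p)}$, and its converse via uniqueness of expansions) are essentially identical to the paper's, as is the final ``last statement'' about $d$ being a period when $\alpha(p^d-1) \in \Z$.
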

\begin{proof}
	Let us first show that if $\alpha \in \Z_{(p)}$ with $-1 \leq \alpha \leq 0$ then this expansion is periodic. There exists some $d > 0$ and some $a \in \Z$ such that $\alpha = -a/(p^d - 1)$ with $0 \leq a < p^d$. Let $a = \sum_{i = 0}^{d-1} a_i p^i$ be the base $p$ expansion for $a$. Then
	\begin{align*}
	\alpha  = \frac{-a}{p^d - 1} & = (a_0 + \cds + p^{d-1} a_{d-1})(1 + p^d + p^{2d} + \cds) \\
	& = a_0 + \cds + p^{d-1}a_{d-1} + \\
	& + a_0p^d + \cds + p^{2d - 1} a_{d-1} + \cds
	\end{align*}
	and thus the expansion is periodic with period $d$. Conversely, if $\alpha$ has a periodic expansion then $\alpha = -a/(p^d -1 )$ for some $d$ and $a$ as above and thus $-1 \leq \alpha \leq 0$. 
	
	Now let $\alpha \in \Z_{(p)}$, and we will show it has an eventually periodic expansion. Observe it suffices to check the case where $\alpha_0 \neq 0$: if $\alpha$ has an eventually periodic expansion so does $p^n \alpha$. If $\alpha$ is negative we have $\alpha = \beta - N$ for some $-1 \leq \beta < 0$ and some $N \in \N_0$. By what we already proved, $\beta$ has a periodic expansion and, since it is negative, we have $\beta_i \neq 0$ for infinitely many $i$. It follows that for some $j \gg 0$ we have
	$$N \leq \beta_0 + p \beta_1 + \cds + \beta_{j-1} p^{j-1}$$
	and thus the expansions of $\alpha_i = \beta_i$ for all $i \geq j$. It follows that $\alpha$ has an eventually periodic expansion, as required. 
	
	Finally, the statement for negative numbers follows because whenever $\alpha_0 \neq 0$ the expansion for $- \alpha$ is given by
	$$- \alpha = (p - \alpha_0) + p(p - 1 - \alpha_1) + p^2(p-1-\alpha_2) + \cds$$ 
	It remains to show that if $\alpha$ has an eventually periodic expansion then $\alpha$ is rational. But again by adding the appropriate integer integer we may assume that $\alpha$ has a purely periodic expansion, and then the result follows from what we have already proved.
	
	The last statement is clear from the proof.
\end{proof}

\begin{lemma} \label{zp-to-padic-lemma}
	Let $\lambda \in \Z_{(p)}$ with $0 \leq \lambda \leq 1$ be such that $\lambda (p^d - 1) \in \N$ and let $\gamma := - \lambda$. Then for all $e > 0$ we have
	$$\gamma_0 + p \gamma_1  + \cds + p^{ed -1} \gamma_{ed -1} = \lambda(p^{ed} -1).$$
\end{lemma}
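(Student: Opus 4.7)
The plan is to reduce to the periodic case from Lemma \ref{periodic-iff-rational} and then compute a geometric sum. Set $a := \lambda(p^d - 1)$, which by hypothesis is a nonnegative integer with $0 \le a \le p^d - 1$ (using $0 \le \lambda \le 1$). Write the base-$p$ expansion $a = a_0 + p a_1 + \cds + p^{d-1} a_{d-1}$ with $0 \le a_i < p$.

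Next I would invoke Lemma \ref{periodic-iff-rational}: since $\gamma = -\lambda$ is rational with $-1 \le \gamma \le 0$, its $p$-adic expansion is periodic with period $d$. In fact the proof of that lemma exhibits the periodic expansion explicitly for a rational of the form $-a/(p^d-1)$: the digits are precisely $(a_0, a_1, \ds, a_{d-1}, a_0, a_1, \ds, a_{d-1}, \ds)$. In particular $\gamma_{i + d} = \gamma_i$ for all $i \ge 0$, and $\gamma_0, \ds, \gamma_{d-1}$ are the base-$p$ digits of $a$.

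Finally, I would group the sum into $e$ blocks of length $d$ and extract the geometric factor:
\begin{align*}
\gamma_0 + p \gamma_1 + \cds + p^{ed-1} \gamma_{ed-1}
&= \sum_{j=0}^{e-1} p^{jd} \bigl( \gamma_0 + p \gamma_1 + \cds + p^{d-1} \gamma_{d-1} \bigr) \\
&= a \cdot \sum_{j=0}^{e-1} p^{jd}
= a \cdot \frac{p^{ed} - 1}{p^d - 1}
= \lambda(p^{ed} - 1),
\end{align*}
using $a = \lambda(p^d - 1)$ in the last step. This yields the claimed identity. There is no real obstacle here: the only subtlety is confirming that the explicit periodic expansion computed inside the proof of Lemma \ref{periodic-iff-rational} is indeed the one used by our definition of the digits $\gamma_i$, which is immediate by uniqueness of the $p$-adic expansion. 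Boundary cases $\lambda = 0$ (all $\gamma_i = 0$) and $\lambda = 1$ (all $\gamma_i = p-1$) are consistent with the formula.
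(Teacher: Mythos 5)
Your proof is correct and follows essentially the same route as the paper's: the paper simply re-derives the explicit periodic expansion of $\gamma = -l/(p^d - 1)$ inline rather than citing it from the proof of Lemma \ref{periodic-iff-rational}, and then performs the same geometric-sum computation to reach $\lambda(p^{ed}-1)$.
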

\begin{proof}
	Let $l := \lambda(p^d - 1)$. By assumption $l$ is an integer and $0 \leq l < p^d$. We can thus consider its base $p$-expansion
	$$l = l_0 + p l_1 + \cds + p^{d - 1} l_{d -1},$$
	where $0 \leq l_i < p$ for all $i$. In $\Z_p$ we have
	\begin{align*}
	\gamma = \frac{-l}{p^d -1 } & = l(1 + p^d + p^{2d} + \cds ) \\
	& = l_0 + pl_1 + p^2 l_2 + \cds + p^{d-1}l_{d-1} + \\
	& + p^d l_0 + p^{d+1} l_1 + p^{d+2} l_2 + \cds + p^{2d -1 }l_{d-1} + \\
	& + p^{2d} l_0 +  \cds 
	\end{align*}
	and thus this is the $p$-adic expansion for $\gamma$. It follows that
	$$\gamma_0 + p \gamma_1 \cds + p^{ed -1} \gamma_{ed -1} = l(1 + p + \cds + p^{ed -1}) = \frac{l(p^{ed}-1)}{(p^d - 1)} = \lambda(p^{ed} - 1)$$
	as required.
\end{proof}

\begin{lemma} \label{p-adic-lemma3}
	Let $\alpha \in \Z_{(p)}$. Then there is some $\gamma \in \Z_{(p)}$ with $-1 \leq \gamma \leq 0$ such that the $p$-adic expansions of $\alpha$ and $\gamma$ are eventually equal. In particular, there is some $n \in \Z$ such that $\alpha = n + \gamma$.
\end{lemma}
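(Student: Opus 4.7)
The plan is to invoke Lemma~\ref{periodic-iff-rational} to obtain an eventually periodic $p$-adic expansion for $\alpha$, then construct $\gamma$ to be the $p$-adic integer whose expansion is the \emph{purely periodic} version of this tail. Concretely, by Lemma~\ref{periodic-iff-rational} there exist integers $N \geq 0$ and $d > 0$ such that $\alpha_{i+d} = \alpha_i$ for all $i \geq N$. After enlarging $N$ if necessary, I would assume $N$ is a multiple of $d$; this ensures that the period starting at index $N$ is aligned with the period one starts from index $0$.

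With $N$ a multiple of $d$, define $\gamma \in \Z_p$ by the expansion $\gamma_i := \alpha_{N + (i \bmod d)}$ for all $i \geq 0$. This expansion is periodic with period $d$ by construction, so by Lemma~\ref{periodic-iff-rational} we get $\gamma \in \Z_{(p)}$ with $-1 \leq \gamma \leq 0$. A quick check confirms the tails agree: for $i \geq N$, writing $i = N + j$ with $j \geq 0$, one has $\gamma_i = \alpha_{N + ((N+j)\bmod d)} = \alpha_{N + (j \bmod d)}$, which by the periodicity of $\alpha$'s tail (and the assumption $d \mid N$) equals $\alpha_{N+j} = \alpha_i$.

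Finally, to conclude $\alpha = n + \gamma$ for an integer $n$, I would note that both $\alpha$ and $\gamma$ converge in $\Z_p$ as $\sum_{i\geq 0}\alpha_i p^i$ and $\sum_{i\geq 0}\gamma_i p^i$ respectively, so
\[
\alpha - \gamma \;=\; \sum_{i \geq 0}(\alpha_i - \gamma_i)p^i \;=\; \sum_{i = 0}^{N-1}(\alpha_i - \gamma_i)p^i,
\]
the last equality because $\alpha_i = \gamma_i$ for $i \geq N$. This last sum is a finite $\Z$-linear combination of powers of $p$, hence an integer $n$. Thus $\alpha = n + \gamma$ as required.

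The only subtle point is the alignment of periods: one must be careful that, from a merely eventually periodic expansion, the purely periodic $\gamma$ built by ``wrapping around'' the period really does agree with $\alpha$ from index $N$ onwards. The assumption $d \mid N$ disposes of this cleanly, and it is free to assume since replacing $N$ by any multiple of $d$ that is $\geq N$ preserves the periodicity condition on the tail of $\alpha$.
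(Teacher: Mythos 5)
Your proof is correct and follows essentially the same route as the paper: invoke Lemma~\ref{periodic-iff-rational} to get an eventually periodic expansion, choose the cut-off $N$ to be a multiple of the period $d$ so that the purely periodic shadow $\gamma$ agrees with $\alpha$ from index $N$ onward, and then observe that $\alpha-\gamma$ is a finite sum of $p$-powers with integer coefficients, hence an integer. The paper phrases the same construction more compactly as writing $\alpha = m + p^n\gamma$ with $d\mid n$, but the substance (and the reliance on Lemma~\ref{periodic-iff-rational} in both directions, for producing periodicity of $\alpha$'s tail and for concluding $-1\le\gamma\le 0$) is identical.
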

\begin{proof}
	By Lemma \ref{periodic-iff-rational} we know that the expansion for $\alpha$ is eventually periodic. This means we may write $\alpha = m + p^n \gamma$ where $0 \leq m < p^n$ and $\gamma \in \Z_p$ has a periodic expansion with period $d >0$ -- in particular, $-1 \leq \gamma \leq 0$ by Lemma \ref{periodic-iff-rational}. By making $m$ bigger if necessary we may furthermore assume that $d$ divides $n$. With these assumptions $\alpha$ and $\gamma$ have expansions that are eventually equal and thus the statement follows.
\end{proof}

\begin{remark}
	If $\alpha$ is not an integer then $\gamma$ is the unique representative of $\alpha + \Z$ contained in $(-1,0)$. If $\alpha$ is a nonnegative integer then $\gamma = 0$ and if $\alpha$ is a negative integer then $\gamma = -1$.
\end{remark}

\bibliography{biblio.bib}
\bibliographystyle{alpha}

\end{document}